\documentclass[final, 12pt,reqno]{amsart}
\usepackage{amssymb,amsmath,graphicx,amsfonts,euscript}
\usepackage{color}
\usepackage[pagewise]{lineno}
\usepackage{showkeys}
\usepackage{fancyhdr}
\usepackage[margin=1in]{geometry}
\usepackage{titletoc}
\usepackage{appendix}
\usepackage{color}
\usepackage{hyperref}
\usepackage{float}

\newtheorem{theorem}{\bf Theorem}[section]

\newtheorem{proposition}[theorem]{\bf Proposition}

\newtheorem{remark}{\bf Remark}[section]

\newtheorem{lemma}[theorem]{\bf Lemma}

\numberwithin{equation}{section}
\allowdisplaybreaks[4]

\newcommand{\beq}{\begin{equation}}
    \newcommand{\eeq}{\end{equation}}
\newcommand{\ben}{\begin{eqnarray}}
    \newcommand{\een}{\end{eqnarray}}
\newcommand{\beno}{\begin{eqnarray*}}
    \newcommand{\eeno}{\end{eqnarray*}}

\begin{document}

\title[Stability threshold for 2-D Taylor-Couette flow]{Nonlinear Stability of Taylor-Couette Flows with
Heat Buoyancy}

\author[Yeping Li Gaofeng Wang and Tianfang Wu ]{\sc Yeping Li, Gaofeng Wang and Tianfang Wu}



\address{  School of Mathematics and Statistics, Nantong University, Nantong, 226019,  China.}
\email{ypleemei@aliyun.com}

\address{  School of Mathematics and Statistics, Nantong University, Nantong, 226019,  China.}
\email{gfwang@ntu.edu.cn}

\address{Wen zhou Business College, Wen zhou 325000, P. R. China.}
\email{20249237@wzbc.edu.cn}

    \bibliographystyle{abbrv}
    \maketitle
    \vskip .01in
    \begin{center}
        \sc Abstract
    \end{center}

    \vskip .05in
This paper investigates the nonlinear stability of Taylor-Couette
(TC) flows incorporating the thermal buoyancy within an annular
domain characterized by small viscosity $\nu$ and thermal
diffusivity $\mu$. It is well established that the buoyancy induced
convection significantly impacts practical industrial applications
of Taylor-Couette flow \cite{Chen2006}. In contrast to
\cite{An.2024},  we specifically examines the influence of the
temperature gradients and the gravity on the stability of
Taylor-Couette flows in this article. The thermal buoyancy term
introduces a destabilizing radial derivative $\partial_r$ into the
rotating TC system. To mitigate this destabilizing effect, we employ
estimates involving the negative derivatives. Consequently, the
additional viscous damping becomes necessary to counterbalance the
buoyancy induced instability. Our stability criterion requires that
the initial perturbations from the Taylor-Couette flow are bounded
by a suitable power of the viscosity. Under this condition, we prove
that solutions to the 2D Boussinesq system on $[1, R] \times
\mathbb{S}^1$ remain close to the Taylor-Couette flow at the same
order.

    \maketitle

    \titlecontents{section}[0pt]{\vspace{0\baselineskip}\bfseries}
    {\thecontentslabel\quad}{}%
    {\hspace{0em}\titlerule*[10pt]{$\cdot$}\contentspage}

    \titlecontents{subsection}[1em]{\vspace{0\baselineskip}}
    {\thecontentslabel\quad}{}%
    {\hspace{0em}\titlerule*[10pt]{$\cdot$}\contentspage}

    \titlecontents{subsubsection}[2em]{\vspace{0\baselineskip}}
    {\thecontentslabel\quad}{}%
    {\hspace{0em}\titlerule*[10pt]{$\cdot$}\contentspage}


    \vskip .3in

    \textbf{Key words:} Stability threshold, Taylor-Couette flow, Resolvent estimates, Heat buoyancy

    \textbf{MSC 2020:}
    35Q30,
35Q35,
76D03
\section{Introduction}

Certain physical processes in industrial applications such as
rotating machinery cooling (see \cite{Becker1962}) and industrial
wastewater purification (see \cite{Ollis1991}) can be effectively
modeled as Taylor-Couette  flow, as illustrated in Figure
\ref{fig:fig1.jpg}. This classical flow configuration describes the
steady annular motion of viscous fluid confined between two
infinitely long coaxial rotating cylinders. The inherent complex
instability characteristics of this flow, coupled with its
significant impact on fluid mixing and transport efficiency, render
the investigation of its underlying destabilization mechanisms both
theoretically valuable and practically relevant. The instability of
rotational-axial composite flows arises from competing physical
destabilization mechanisms, a characteristic that has attracted
considerable academic attention. For typical flow configurations,
researchers have systematically investigated various systems
including: Hagen-Poiseuille flow under combined rotational and axial
pressure gradients \cite{Salwen1981}, helical Couette flow with
rotational and axial slip characteristics \cite{Meseguer2000}, and
spiral Poiseuille flow under composite pressure gradients
\cite{Meseguer2002}. A profound understanding of the asymptotic
properties of Taylor-Couette flow solutions holds significant
application value in diverse fields such as desalination,
magnetohydrodynamics, and viscosity measurement.
\begin{figure}[htbp]
  \centering
  \includegraphics[width=8.0139cm]{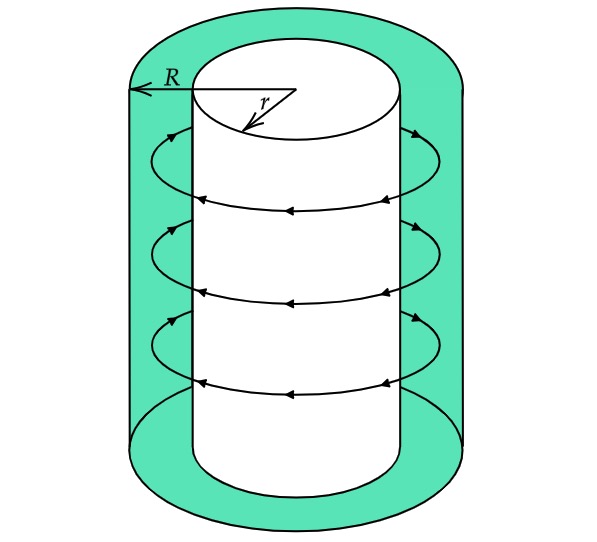} \\
  \caption{Schematic diagram of Taylor-Couette flow geometry}
  \label{fig:fig1.jpg}
\end{figure}
Despite its recognition as a fundamental rotational flow, the
perturbation analysis of Taylor-Couette flow poses significant
unresolved challenges, even despite extensive experimental,
theoretical, and numerical investigations. Many critical issues in
this field remain unresolved, sustaining its status as a focal point
in fluid mechanics research \cite{Chossat1994,Forbes2018,Pope2009}.
Notably, substantial gaps persist in mathematical proofs even for
two dimensional cases, underscoring the urgent need for fundamental
theoretical investigations. An and Li \cite{An.2024} recently
demonstrated  the stability of two dimensional Taylor-Couette flow.
Current theoretical frameworks predominantly rely on isothermal
assumptions and neglect gravitational effects, exhibiting
substantial discrepancies from realistic industrial conditions.
Particularly in rotating mechanical systems with thermal gradients,
the coupled interaction between buoyancy driven thermal convection
and centrifugal forces fundamentally alters flow stability
characteristics. This study systematically incorporates the
Boussinesq approximation to systematically investigate the critical
instability conditions and flow regime evolution under synergistic
thermal buoyancy and rotational effects, providing new theoretical
foundations for optimizing industrial equipment design.

The Boussinesq approximation captures essential thermo-hydrodynamic interactions in buoyant fluids, 
providing a fundamental framework for analyzing convection-driven phenomena. 
Its broad applicability spans atmospheric dynamics, oceanic circulation, 
and Rayleigh-Bénard convection studies \cite{Constantin.1996,Doering.1995,Majda.2003}, 
making it a cornerstone of geophysical fluid dynamics. 
The mathematical richness of the Boussinesq equations has stimulated significant research interest, 
particularly regarding two central challenges: 
(1) global well-posedness and (2) nonlinear stability thresholds. 
These interconnected problems drive ongoing theoretical advances in partial differential equations.

The dimensionless 2D Boussinesq system considered here is governed by:
\begin{equation}\label{1.1}
\begin{cases}
\partial_t v + v \cdot \nabla v = \nu \Delta v - \nabla p - \rho \mathbf{g} & \\
\partial_t \rho + v \cdot \nabla \rho = \mu \Delta \rho & \\
\nabla \cdot v = 0 & \\
\rho(0,x,y) = \rho_0(x,y), \quad v(0,x,y) = v_0(x,y)
\end{cases}
\end{equation}
where $v = (v^1, v^2)$ denotes the velocity field, 
$\rho$ represents temperature perturbation from equilibrium, 
and $\mathbf{g} = (0,g)$ is the gravitational acceleration vector. 
The parameters $\nu$ (kinematic viscosity) and $\mu$ (thermal diffusivity) 
define inverse Péclet numbers for momentum and heat transport, respectively. 
For analytical tractability, we assume $\nu = \mu$ throughout this study. 
Global well-posedness for this system is established in \cite{Abidi2007,Adhikari2016,Hou2005}, 
with solution regularity depending on initial data smoothness.
 We also introduce the vorticity $\Omega$ and the
stream function $\varphi$ are defined by
$$\Omega=\nabla\times v=\partial_y v^1-\partial_x v^2,\quad v=\nabla^{\perp}\varphi=(\partial_y\varphi,-\partial_x \varphi),\quad \Delta \varphi=\Omega,$$ which satisfy
\begin{equation}\label{1.2}
        \left\{
        \begin{aligned}
            &\partial _t\Omega +v\cdot\nabla \Omega-\nu \Delta \Omega=\partial_x\rho ,\\
            &\partial_t\rho +v\cdot\nabla \rho-\nu \Delta \rho=0,\\
            &\rho(0,x,y)=\rho_{0}(x,y),\Omega(0,x,y)=\Omega_{0}(x,y).\\
        \end{aligned}
        \right.
\end{equation}
Our attention here will be focused on the stability problem. For the
convenience of the problem, we adopt the polar coordinate
representation and impose inhomogeneous Navier-slip boundary
condition. That is,
\begin{equation}\label{1.3}
    \left\{
\begin{aligned}
    &   \partial_t \Omega-\nu \left(\partial_r^2+\frac{1}{r} \partial_r+\frac{1}{r^2} \partial_\theta^2\right)\Omega+\frac{1}{r}\left(\partial_r \varphi \partial_\theta \Omega-\partial_\theta \varphi \partial_r \Omega\right)\\
    &\quad=\cos\theta\partial_{r}\rho-\frac{\sin\theta}{r}\partial_{\theta}\rho, \\
    &\partial_t \rho-\nu\left(\partial_r^2+\frac{1}{r} \partial_r+\frac{1}{r^2} \partial_\theta^2\right) \rho+\frac{1}{r}\left(\partial_r \varphi \partial_\theta \rho-\partial_\theta \varphi \partial_r \rho\right)=0, \\
&   \left(\partial_r^2+\frac{1}{r} \partial_r+\frac{1}{r^2} \partial_\theta^2\right) \varphi=\Omega,\\
&\left (\Omega,\rho)\right|_{r=1, R}=(2A,1)
\text { with }(r, \theta) \in[1, R] \times \mathbb{S}^1 \text { and } t \geq 0.
    \end{aligned}
\right.
\end{equation}
These boundary conditions emerge from the physical scenario of a fluid obeying Navier slip conditions at solid interfaces, 
with constant tangential stress imposed on boundaries. 
Kinetic theory derivations of such boundary constraints are detailed in \cite{Masmoudi.202301}. The central objective of this work is to characterize the nonlinear stability properties 
and asymptotic dynamics of small-amplitude disturbances superposed on Taylor-Couette flow. 
This canonical flow configuration is described by the exact solution:
\begin{align*}
\mathbf{U}(r,\theta) &= \left(Ar + \frac{B}{r}\right) (-\sin\theta, \cos\theta),\\
\widetilde{\Omega}(r,\theta) &= 2A, \\
\rho_s(r,\theta) &= 1
\end{align*}
which satisfies the governing system \eqref{1.1} for arbitrary rotation parameters $A$ and $B$. 
Here $A$ governs rigid-body rotation while $B$ represents potential flow effects, 
collectively determining the vorticity distribution.
Now, we introduce  the perturbation
$$u=v-\mathbf{U}(r,\theta),\quad \omega=\Omega-\tilde{\Omega}(r,\theta),\quad \tilde{\rho}(r,\theta)=\rho(r,\theta)-\rho_s(r,\theta)$$ and  for notational convenience, we shall write $\rho$ for $\tilde{\rho}$ from now on, then
$(\varphi,\rho,\omega)$ satisfies
\begin{equation}\label{1.3}
    \left\{
\begin{aligned}
    &   \partial_t \omega-\nu \left(\partial_r^2+\frac{1}{r} \partial_r+\frac{1}{r^2} \partial_\theta^2\right)\omega+\left(A+\frac{B}{r^2}\right) \partial_\theta\omega+\frac{1}{r}\left(\partial_r \varphi \partial_\theta \omega-\partial_\theta \varphi \partial_r \omega\right)\\
    &\quad=\cos\theta\partial_{r}\rho-\frac{\sin\theta}{r}\partial_{\theta}\rho, \\
    &   \partial_t \rho-\nu\left(\partial_r^2+\frac{1}{r} \partial_r+\frac{1}{r^2} \partial_\theta^2\right) \rho+\left(A+\frac{B}{r^2}\right) \partial_\theta \rho+\frac{1}{r}\left(\partial_r \varphi \partial_\theta \rho-\partial_\theta \varphi \partial_r \rho\right)=0, \\
&   \left(\partial_r^2+\frac{1}{r} \partial_r+\frac{1}{r^2} \partial_\theta^2\right) \varphi=\omega,\\
&\left (\omega,\rho)\right|_{r=1, R}=0
\text { with }(r, \theta) \in[1, R] \times \mathbb{S}^1 \text { and } t \geq 0.
    \end{aligned}
\right.
\end{equation}

We focus primarily on the study of the transition threshold problem.
Given the norms $\|\cdot\|_{X_1}$ and $\|\cdot\|_{X_2}$, our objective is to determine $\alpha=\alpha\left(X_1, X_2\right)$ and $\beta=\beta\left(X_1, X_2\right)$ such that
$$
\begin{aligned}
& \left\|u_0\right\|_{X_1} \leq \nu^\alpha \text { and }\left\|\rho_0\right\|_{X_2} \leq \nu^\beta \Rightarrow \text { stability; } \\
& \left\|u_0\right\|_{X_1} \gg \nu^\alpha \text { or }\left\|\rho_0\right\|_{X_2} \gg \nu^\beta \Rightarrow \text { instability. }
\end{aligned}
$$
Prior to presenting our key theorem, we contextualize it within the landscape of hydrodynamic stability theory for parallel flows. System \eqref{1.1} admits the well-known stationary solution:
\begin{equation}\label{1.6}
\mathbf{u}_s = (y, 0), \quad \rho_s = 1, \quad P_s = -y + C
\end{equation}
representing unidirectional shear flow. For the infinite channel domain $\mathbb{T} \times \mathbb{R}$, optimal stability thresholds were established through quasi-linearization techniques \cite{Bian.2022,Deng.2021,Niu2024,Zillinger.2021,Zillinger.20210101,Zillinger.2023}. The sharpest existing result requires:
\begin{align}
\|\mathbf{u}_0\|_{H^s} &\leq c_0 \nu^{1/3}, \\
\|\langle x \rangle \rho_0\|_{H^s} &\leq c_0 \nu^{2/3}
\end{align}
with $s > 5$ and $\nu = \mu$ \cite{Niu2024}. This methodology decomposes the governing equations into:
\begin{itemize}
\item A \textit{transport-dominant subsystem} preserving initial regularity,
\item Coupled \textit{quasi-linear equations} initialized at zero.
\end{itemize}
For bounded channels $\mathbb{T} \times I$ with no-slip boundaries, Masmoudi et al.\cite{Masmoudi.2023} derived the threshold:
\begin{align}
\|\omega_0\|_{H^1} &\leq c_0 \min\{\nu, \mu\}^{1/2}, \\
\|\rho_0\|_{H^1} + \||D_x|^{1/6} \rho_0\|_{H^1} &\leq c_0 \min\{\nu, \mu\}^{11/12}
\end{align}
demonstrating boundary effects alter stability scalings.

The core mechanisms driving flow systems back to equilibrium in hydrodynamic stability theory are fundamentally attributed to enhanced dissipation and inviscid damping. In essence, enhanced dissipation achieves stabilization through “corrective reinforcement", while inviscid damping suppresses instability via “destabilization mitigation" of nonlinear terms. These dual phenomena have been systematically elucidated in stability studies of Couette flow within the Navier-Stokes framework. The following exposition delineates their mathematical underpinnings and physical interpretations.

Shear flow fields induce structural reorganization of perturbations
through stretching folding dynamics, progressively fragmenting
low-frequency large-scale vortices into high-frequency small-scale
disturbances. Given that the decay rate of viscous diffusivity $\nu$
scales quadratically with wavenumber (manifested as $e^{-\nu k^2
t}$), high-frequency perturbation energy exhibits exponentially
accelerated attenuation under diffusive effects. This energy cascade
from low-to-high frequency modes results in significantly enhanced
overall dissipation efficiency compared to uniform flow conditions,
hence termed enhanced dissipation. The physical community has
experimentally observed and theoretically investigated this
phenomenon (see \cite{Yaglom.2012}). Recent mathematical
breakthroughs include the pioneering work by Constantin, Kiselev,
Ryzhik, and Zlatos \cite{Constantin2008}, who rigorously established
the rapid energy dissipation induced by fluid mixing through the
concept of relaxation-enhanced flows. Subsequent landmark studies
have advanced our understanding through analyses of
convection-diffusion equations and Navier-Stokes shear flow
stability (see
\cite{Bedrossian.201704,Bedrossian.2017,Bedrossian.2019,Bedrossian.2020,Bedrossian.2016,Bedrossian.2018,Masmoudi.2020,Wei.2023,Wei.2020}).

In contrast, under the Eulerian framework with vanishing diffusivity ($\nu=0$), shear flows can still achieve spatial redistribution of kinetic energy through mixing effects: localized perturbation energy gradually disperses throughout the flow domain, manifesting as algebraic decay of velocity perturbations. This constitutes the inviscid damping phenomenon. Early recognition dates back to Orr's 1907 investigation \cite{Orr.w1907.} on Couette flow perturbations, which revealed velocity field homogenization despite $L^2$ energy conservation. Theoretically, Bedrossian and Masmoudi \cite{Bedrossian2015} established  the proof of inviscid damping and nonlinear stability for Couette flow in 2D Euler equations. State-of-the-art developments in this field are documented in \cite{Ionescu2020,Ionescu.2023,Masmoudi2024}. When $\rho \equiv 0$, system \eqref{1.2} degenerates to the classical 2D Navier-Stokes equations. Significant theoretical advances have established rigorous stability criteria for shear flows in two key configurations:
\begin{enumerate}
    \item \textbf{Passive scalar advection}: Stability thresholds under velocity perturbations,
    \item \textbf{Full NS dynamics}: Nonlinear stability of shear profiles.
\end{enumerate}
For  infinite channel case, seminal contributions include \cite{Bedrossian.2016,Bedrossian.2017,Bedrossian.2018,Bedrossian.2020,Masmoudi.2020,Masmoudi.2022,Wei.2021,Wei.2023}, while bounded channels ($\mathbb{T}\times I$) were analyzed in \cite{Chen.2020,Chen.2024} with no-slip constraints. This body of work provides fundamental methodologies for our buoyancy-modified stability analysis.

In contrast to \cite{An.2024}, this work focuses on the interplay between temperature gradients and gravitational effects in the stability of Taylor-Couette flow. Notably, the thermal buoyancy term involves derivatives with respect to the radial coordinate $r$, which introduces destabilizing mechanisms within rotating flow configurations. Motivated by this observation, we aim to systematically investigate how thermal buoyancy influences both the linear stability thresholds and long-time dynamical behavior of Taylor-Couette systems. Specifically, this section will quantify the destabilizing role of thermal stratification and characterize its impact on transient growth phenomena and asymptotic stability. To state our result, we define
$${f}_=(t;r):=\dfrac{1}{2\pi}\int_{\mathbb{T}}f(t,\theta,r)d\theta,\quad f_{\neq}=f-{f}_==\sum_{k\neq 0}\hat{f}_k(t,r)e^{ik\theta},$$
denoting
the zero mode and the non-zero mode.
The Fourier transform of $f$ in the $\theta$ direction is defined by
$$\hat{f}_k(t,r)=\dfrac{1}{2\pi}\int_{\mathbb{T}}f(t,\theta,r)e^{-ik\theta}d\theta,$$
and $k$ is the wave number.
  Our main result is stated as follows.

\begin{theorem}\label{thm1}
Consider solutions $(\omega,\rho)$ to system \eqref{1.3} initiating from $(\omega_0,\rho_0)$. Under the scaling constraint $\log R = \mathcal{O}(\nu^{-1/3}|B|^{1/3})$, there exist absolute constants $\nu_0, c', \epsilon_0, \epsilon_1, C > 0$ (independent of $\nu, B, k, R$) such that if the initial perturbations satisfy:
\begin{align}
R\|\omega_0\|_{L_\theta^2 H_r^1} + R^{-2}(\log R)^{-3/2}\|r^2\omega_0\|_{L_\theta^2 L_r^2} + R^3\left\|\frac{\omega_0}{r^3}\right\|_{L_\theta^2 L_r^2} &\leq \epsilon_0 \nu^{1/2} |B|^{1/2} R^{-2}, \label{cond1} \\
\|\rho_0\|_{H^1} &\leq \epsilon_1 \nu^{7/6} |B|^{5/6} R^{-3} \label{cond2}
\end{align}
 for some sufficiently small $\epsilon_0,\epsilon_1,0<\ \nu\leq \nu_0 ,$ then the solution $(\omega,\rho)$ is global in time and satisfies the following stability estimate:
\begin{equation}
\begin{aligned}
\sum_{k \in \mathbb{Z}} E_k &\leq C \epsilon_0 \nu^{1/2} |B|^{1/2} R^{-2}, \\
\sum_{k \in \mathbb{Z}} H_k &\leq C \epsilon_1 \nu^{7/6} |B|^{5/6} R^{-3}.
\end{aligned}
\end{equation}
The energy functionals are defined as:
\begin{equation}
\begin{aligned}
E_k &:= \begin{cases} 
\|\mathcal{E}_k(\omega_k)\|_{L^\infty L^2} + \nu^{1/6}|k|^{1/3}|B|^{1/3} R^{-1} \|\mathcal{E}_k(\omega_k)\|_{L^2 L^2} & \\
\quad + |B|^{1/2}|k|^{3/2}R^{-2} \|\mathcal{E}_k(r^{-1/2}\varphi_k)\|_{L^2 L^\infty} & \\
\quad + (\nu k^2)^{1/2} \|\mathcal{E}_k(r^{-1} w_k)\|_{L^2 L^2}, & k \neq 0 \\
\|\omega_=\|_{L^\infty L^2}, & k = 0
\end{cases} \\
H_k &:= \begin{cases} 
\|\mathcal{E}_k(\rho_k)\|_{L^\infty L^2} + \nu^{1/6}|k|^{1/3}|B|^{1/3} R^{-1} \|\mathcal{E}_k(\rho_k)\|_{L^2 L^2} & \\
\quad + (\nu k^2)^{1/2} \|\mathcal{E}_k(r^{-1} \rho_k)\|_{L^2 L^2}, & k \neq 0 \\
\|\rho_=\|_{L^\infty L^2}, & k = 0
\end{cases}
\end{aligned}
\end{equation}
where $\mathcal{E}_k(f) := e^{c'(\nu k^2)^{1/3}|B|^{2/3} R^{-2} t} f$.
\end{theorem}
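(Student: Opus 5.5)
We will argue by a bootstrap (continuity) argument built on space--time resolvent estimates for the linearized operator mode by mode, following the strategy of \cite{An.2024} for the isothermal case and adding a separate treatment of the temperature. After Fourier transforming in $\theta$, each nonzero mode satisfies
\[
\partial_t \omega_k - \nu\Big(\partial_r^2+\tfrac1r\partial_r-\tfrac{k^2}{r^2}\Big)\omega_k + ik\Big(A+\tfrac{B}{r^2}\Big)\omega_k = f_k + (\text{buoyancy})_k ,
\]
and $\rho_k$ satisfies the same equation with forcing $g_k$ and no buoyancy. Here $f_k,g_k$ are the nonlinear terms $\frac1r(\partial_r\varphi\partial_\theta\cdot-\partial_\theta\varphi\partial_r\cdot)_k$, and the buoyancy term $(\cos\theta\partial_r\rho-\frac{\sin\theta}{r}\partial_\theta\rho)_k$ couples the modes $k\pm1$ of $\rho$. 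We take as known the linear space--time estimates of \cite{An.2024} for this operator with Navier-slip (Dirichlet on $\omega$) boundary data. These come from resolvent bounds for $-\nu\Delta_k+ik(A+B/r^2)-i\lambda$ of size $(\nu k^2)^{-1/3}|B|^{-2/3}R^{2}$, and they give: for the exponentially weighted quantity $\mathcal E_k(\omega_k)$ with $c'$ small, the full energy $E_k$ is controlled by $\|\omega_k(0)\|$ plus $\|\mathcal E_k f_k\|$ measured either in $L^2L^2$ with weight $(\nu k^2)^{-1/3}|B|^{-2/3}R^2$, or in divergence form $\partial_r F_1 + ikF_2/r$ with weight $(\nu k^2)^{-1/2}$. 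We will use the same estimate for $\rho_k$, dropping the $\varphi$ term, to bound $H_k$.

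\textbf{Step 1 (temperature).} First we close the $\rho$-estimate. We write $g_k$ in divergence form, $\frac1r\partial_r(\partial_\theta\varphi\,\rho)-\frac1r\partial_\theta(\partial_r\varphi\,\rho)$, so that the nonlinearity costs only $(\nu k^2)^{-1/2}\|\mathcal E_k(u\rho)_k\|_{L^2L^2}$. We split by frequencies $(l,k-l)$. In the interactions involving the nonzero modes of $\varphi$, the $L^2L^\infty$ bound on $r^{-1/2}\varphi_l$ in $E_l$ supplies time integrability, while $\rho$ is taken in $L^\infty L^2$. For the zero-mode velocity we use $\|\omega_=\|_{L^\infty L^2}$ together with the enhanced dissipation of $\rho_{\ne}$. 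The exponential weights satisfy $\mathcal E_k\le\mathcal E_l\mathcal E_{k-l}$ by subadditivity of $|k|^{2/3}$. Summing in $k$ gives
\[
\sum H_k\lesssim \|\rho_0\|_{H^1}+C\nu^{-1/2}|B|^{-1/2}R^{2}\Big(\sum E_k\Big)\Big(\sum H_k\Big),
\]
which closes because $\sum E_k\ll\nu^{1/2}|B|^{1/2}R^{-2}$. The $k=0$ mode of $\rho$ is handled by a direct $L^2$ energy estimate.

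\textbf{Step 2 (vorticity with buoyancy) --- main obstacle.} The term $\cos\theta\,\partial_r\rho$ carries a full radial derivative, and the enhanced dissipation controls only $(\nu k^2)^{1/2}\|r^{-1}\rho_k\|$, not $\partial_r\rho_k$. To absorb it we will use the divergence-form (negative-derivative) version of the resolvent estimate: we regard $\partial_r\rho_{k\pm1}$ as $\partial_r F_1$ with $F_1=\rho_{k\pm1}$, which costs $(\nu k^2)^{-1/2}\|\mathcal E\rho_{k\pm1}\|_{L^2L^2}$. We then bound this by $(\nu k^2)^{-1/2}\,\nu^{-1/6}|k|^{-1/3}|B|^{-1/3}R\,H_{k\pm1}$. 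The term $\frac{\sin\theta}{r}\partial_\theta\rho$ is treated in the same way with $F_2$. Summing, the buoyancy contributes $\lesssim\nu^{-2/3}|B|^{-1/3}R\sum H_k$ to $\sum E_k$. We need this to be at most $\epsilon_0\nu^{1/2}|B|^{1/2}R^{-2}$, which holds precisely when $\sum H_k\lesssim\epsilon_1\nu^{7/6}|B|^{5/6}R^{-3}$; this explains the exponent in \eqref{cond2}. The hard part is verifying that the divergence-form estimate holds with the stated weights under the Dirichlet data for $\omega$, since $\rho$ need not vanish in the right trace sense for the $F_1$ boundary term. Here we plan to exploit $\rho|_{r=1,R}=0$. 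A second difficulty is that the mode coupling $k\pm1$ must not destroy the exponential weights. Since $(\nu k^2)^{1/3}$ and $(\nu(k\pm1)^2)^{1/3}$ are comparable for $k\ne0$, this only adjusts $c'$. The mode $k=\pm1$ receives forcing from $\rho_0$, which is not dissipated; we treat it via the energy estimate for $\rho_=$, using that the zero mode's contribution is a pure $\partial_r\rho_=$ term handled as above with the $L^\infty L^2$ norm integrated against the decaying weight.

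\textbf{Step 3 (vorticity nonlinearity and zero mode).} The remaining terms are the $\omega$-nonlinearities, estimated exactly as in \cite{An.2024}. They use the weighted norms $\|r^2\omega_0\|$ and $\|\omega_0/r^3\|$ to control the $r$-weights over $[1,R]$. The constraint $\log R=\mathcal O(\nu^{-1/3}|B|^{1/3})$ enters through Hardy-type inequalities for $\varphi_k$, and these give a quadratic term $C\nu^{-1/2}|B|^{-1/2}R^{2}(\sum E_k)^2$. The zero mode $\omega_=$ gets an $L^2$ energy estimate in which the buoyancy zero mode $(\cos\theta\,\partial_r\rho)_=$ involves only $\rho_{\pm1}$, again in divergence form. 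Finally, a continuity argument on the smallest time at which either sum exceeds twice its bound, together with local well-posedness, yields global existence and the stated estimates.
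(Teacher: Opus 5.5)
Your overall architecture matches the paper's. You use mode-by-mode space--time bounds and place $\partial_r\rho_{k\pm1}$ in divergence form, which gives the coupling $\nu^{-2/3}|B|^{-1/3}R\sum H_k$; that coupling is exactly where $\nu^{7/6}|B|^{5/6}R^{-3}$ comes from. You then close with a bootstrap.

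\textbf{Step~1 does not close as you describe it.}

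First, the divergence-form bound costs $\nu^{-1/2}$, not $(\nu k^2)^{-1/2}$. In the energy identity, the $|k|$ in $|\langle \tfrac{ik}{r}F,\rho_k\rangle|\le |k|\,\|F\|\,\|\rho_k/r\|$ is used up by the dissipation $\nu k^2\|\rho_k/r\|^2$, and a $\partial_r F$ term only sees $\nu\|\partial_r\rho_k\|^2$. No trace condition on $\rho$ is needed here, because the test function vanishes at $r=1,R$.

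Second, and more importantly, the $L^2L^\infty$ bound on $r^{-1/2}\varphi_l$ only handles the $\partial_\theta\varphi\,\partial_r\rho$ half of the nonlinearity. The other half, $\tfrac{ik}{r}\sum_l\partial_r(r^{-1/2}\varphi_l)\rho_{k-l}$, involves $\partial_r\varphi_l$. There is no inviscid-damping bound for $\partial_r\varphi_l$ in $L^\infty_r$; Lemma~\ref{lem2.1} gives only $\|r^{1/2}\partial_r\varphi_l\|_{L^\infty}\lesssim|l|^{-1/2}\|r\omega_l\|_{L^2}$. Time integrability must therefore come from enhanced dissipation of $\omega_l$ or $\rho_{k-l}$, at a price of $\nu^{-1/6}|B|^{-1/3}R$. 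In divergence form the coefficient of $E_lH_{k-l}$ becomes $\nu^{-2/3}|B|^{-1/3}R^2$ (the same happens for $l=0$ at low $|k|$). Against $\sum E_k\lesssim\epsilon_0\nu^{1/2}|B|^{1/2}R^{-2}$ this gives $\epsilon_0(|B|/\nu)^{1/6}$, which is not small.

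The paper handles this half differently:
\begin{itemize}
\item It treats it as an $L^2$ forcing through the enhanced-dissipation resolvent bound (Proposition~\ref{pro5.7}), at cost $\nu^{-1/6}|k|^{2/3}|B|^{-1/3}\|g_1\|_{L^2L^2}$.
\item Using $|k|\le 2|l||k-l|$, the coefficient reduces to $\nu^{-1/3}|B|^{-2/3}|k|^{1/3}R^2$. This is at most $\nu^{-1/2}|B|^{-1/2}R^2$ exactly when $\nu k^2\le|B|$.
\item For $\nu k^2\ge|B|$ it switches to the plain energy estimate (Proposition~\ref{pro5.8}), where the size of $|k|$ absorbs the loss.
\end{itemize}
This frequency split, with only the $\partial_\theta\varphi\,\partial_r\rho$ part in divergence form, is the ingredient missing from your Step~1.

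\textbf{Your fix for $k=\pm1$ also fails.} The factor $\mathcal{E}_{\pm1}$ is the growing weight $e^{c'\nu^{1/3}|B|^{2/3}R^{-2}t}$, not a decaying one. Meanwhile $\rho_=$ decays at most at the diffusive rate $O(\nu(R-1)^{-2})$, which for small $\nu$ is much slower. So $\|\mathcal{E}_{\pm1}(\rho_=)\|_{L^2L^2}$ is not controlled by $\|\rho_=\|_{L^\infty L^2}$ and is in general infinite. Worse, already for the linearized system $\omega_{\pm1}$ is driven by $\cos\theta\,\partial_r\rho_=$ and decays no faster than $\rho_=$. Hence $\|\mathcal{E}_{\pm1}(\omega_{\pm1})\|_{L^\infty L^2}$ is itself infinite whenever $\partial_r\rho_=\not\equiv0$.

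The paper does not handle this point either. Going from \eqref{6.12} to \eqref{6.16}, it bounds $\nu^{-1/2}\|\mathcal{E}_k(\rho_{k\pm1})\|_{L^2L^2}$ by $\nu^{-2/3}|B|^{-1/3}RH_{k\pm1}$ uniformly in $k$. That step is unjustified when $k\pm1=0$, because $H_0$ only involves $\|\rho_=\|_{L^\infty L^2}$. A correct argument has to weaken the functional at $k=\pm1$, for example by dropping or slowing the exponential weight on those modes.
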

\begin{remark}
Theorem \ref{thm1} establishes \textit{the first rigorous stability framework} for Taylor-Couette flow under Boussinesq dynamics. Our threshold conditions:
\begin{equation}
\begin{aligned}
I_\omega &:= R\|\omega_0\|_{L_\theta^2 H_r^1} + R^{-2}(\log R)^{-2/3}\|r^2\omega_0\|_{L_\theta^2 L_r^2} + R^3\left\|\frac{\omega_0}{r^3}\right\|_{L_\theta^2 L_r^2} \leq \epsilon_0\nu^{1/2}|B|^{1/2}R^{-2} \label{thresh1}, \\
I_\rho &:= \|\rho_0\|_{H^1} \leq \epsilon_1\nu^{7/6}|B|^{5/6}R^{-3}. 
\end{aligned}
\end{equation}
are likely suboptimal compared to:
\begin{itemize}
\item Shear flow Boussinesq systems \cite{Bian.2022,Deng.2021,Niu2024,Zillinger.2021,Zillinger.20210101,Zillinger.2023},
\item Isothermal Taylor-Couette flow \cite{An.2024}.
\end{itemize}
The degradation ($\nu^{7/6}$ vs. $\nu^{2/3}$) stems from \textit{rotational-buoyancy coupling}:
\begin{enumerate}
\item Buoyancy term $\cos\theta\partial_r\rho$ becomes destabilizing under Taylor-Couette mixing,
\item Thermal-rotational coupling introduces new instability mechanisms.
\end{enumerate}
This necessitates enhanced dissipation ($\nu^{7/6}$) to counteract buoyancy-driven instability.
\end{remark}

\begin{remark}
The functional estimates reveal distinct stabilization mechanisms:
\begin{itemize}
\item $|B|^{1/2}|k|^{3/2}R^{-2}$ terms: \textit{Inviscid damping},
\item $\nu^{1/6}|k|^{1/3}|B|^{1/3} R^{-1}$ terms: \textit{Enhanced dissipation}.
\end{itemize}
\end{remark}
We conclude with key notation used throughout:
\begin{align*}
\|f\|_{L_t^p L_r^q} &:= \left\| \|f(t,\cdot)\|_{L_r^q(1,R)} \right\|_{L_t^p(\mathbb{R}_+)}, \\
a \sim b &:\Leftrightarrow \exists C>0 \text{ s.t. } C^{-1}b \leq a \leq Cb, \\
a \lesssim b &:\Leftrightarrow \exists C>0 \text{ s.t. } a \leq Cb,
\end{align*}
where $|k| \geq 1$, and $C > 0$ denotes generic constants independent of $\nu, k$. Subsequent sections develop:
\begin{enumerate}
\item \textit{Resolvent estimates} for linearized operators (Section 4),
\item \textit{Space-time analysis} of linearized Boussinesq dynamics via resolvent methods (Section 5),
\item \textit{Nonlinear stability proof} (Section 6).
\end{enumerate}

  \section{Foundational Estimates}
This section adapts and refines essential elliptic estimates from \cite{An.2024} for our framework. The following lemmas establish crucial relationships between stream functions and vorticity in cylindrical coordinates.

\begin{lemma}[Weighted elliptic estimates for azimuthal modes] \label{lem2.1}
For $|k| \geq 1$, let $\varphi$ satisfy $\left(\partial_r^2 - \frac{k^2 - 1/4}{r^2}\right) \varphi = \omega$ with $\varphi|_{r=1,R} = 0$. The following inequalities hold:
\begin{align}
\|\partial_r \varphi\|_{L^2}^2 + |k|^2 \|r^{-1}\varphi\|_{L^2}^2 &\lesssim |\langle \omega, \varphi\rangle| \lesssim |k|^{-2} \|r \omega\|_{L^2}^2,\\
\|\partial_r \varphi\|_{L^2}^2 + |k|^2 \|r^{-1}\varphi\|_{L^2}^2 &\lesssim |k|^{-1} \|r^{1/2} \omega\|_{L^1}^2,\\
\|r^{1/2} \partial_r \varphi\|_{L^{\infty}} + |k| \|r^{-1/2} \varphi\|_{L^{\infty}} &\lesssim \left(\frac{R}{R-1}\right)^{1/2} |k|^{-1/2} \|r \omega\|_{L^2}.
\end{align}
\end{lemma}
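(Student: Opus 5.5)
We prove the three inequalities in order, starting from the energy identity obtained by testing the equation against $\varphi$. Since $\varphi(1)=\varphi(R)=0$, integrating by parts gives
\begin{equation*}
-\langle \omega,\varphi\rangle=\|\partial_r\varphi\|_{L^2}^2+\Big(k^2-\tfrac14\Big)\|r^{-1}\varphi\|_{L^2}^2 .
\end{equation*}
For $|k|\ge1$ we have $k^2-\frac14\ge\frac34k^2$, so the left-hand side of the first inequality is bounded by $|\langle\omega,\varphi\rangle|$. For the upper bound, write $\langle\omega,\varphi\rangle=\langle r\omega,r^{-1}\varphi\rangle$ and apply Cauchy--Schwarz:
\begin{equation*}
|\langle\omega,\varphi\rangle|\le\|r\omega\|_{L^2}\|r^{-1}\varphi\|_{L^2}\le |k|^{-1}\|r\omega\|_{L^2}\big(|\langle\omega,\varphi\rangle|\big)^{1/2},
\end{equation*}
up to constants. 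Absorbing the square root gives $|\langle\omega,\varphi\rangle|\lesssim|k|^{-2}\|r\omega\|_{L^2}^2$, which is the first inequality.

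For the second inequality we pair instead as $|\langle\omega,\varphi\rangle|\le\|r^{1/2}\omega\|_{L^1}\|r^{-1/2}\varphi\|_{L^\infty}$. This requires the pointwise bound
\begin{equation*}
\|r^{-1/2}\varphi\|_{L^\infty}^2\lesssim |k|^{-1}\Big(\|\partial_r\varphi\|_{L^2}^2+k^2\|r^{-1}\varphi\|_{L^2}^2\Big).
\end{equation*}
To get it, set $g=r^{-1/2}\varphi$, which vanishes at $r=1$ and $r=R$. For any $r_0$,
\begin{equation*}
|g(r_0)|^2=\Big|\int_1^{r_0}\partial_r(g^2)\,dr\Big|\le 2\int|g||\partial_rg|\,dr .
\end{equation*}
Since $\partial_rg=r^{-1/2}\partial_r\varphi-\frac12r^{-3/2}\varphi$, we get
\begin{equation*}
\int|g||\partial_rg|\,dr\le\int\Big(r^{-1}|\varphi||\partial_r\varphi|+\tfrac12r^{-2}\varphi^2\Big)dr\le\|r^{-1}\varphi\|_{L^2}\|\partial_r\varphi\|_{L^2}+\|r^{-1}\varphi\|_{L^2}^2 .
\end{equation*}
By AM--GM the right-hand side is $\lesssim|k|^{-1}\big(\|\partial_r\varphi\|_{L^2}^2+k^2\|r^{-1}\varphi\|_{L^2}^2\big)$, using $|k|\ge1$ for the $\|r^{-1}\varphi\|_{L^2}^2$ term. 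Inserting this into the pairing and absorbing as before gives the second inequality.

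For the third inequality, the bound on $|k|\|r^{-1/2}\varphi\|_{L^\infty}$ follows by combining the pointwise bound just proved with the first inequality:
\begin{equation*}
|k|^2\|r^{-1/2}\varphi\|_{L^\infty}^2\lesssim|k|\cdot|k|^{-2}\|r\omega\|_{L^2}^2=|k|^{-1}\|r\omega\|_{L^2}^2 .
\end{equation*}

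The bound on $r^{1/2}\partial_r\varphi$ is the main obstacle, because there is no boundary condition on $\partial_r\varphi$. We handle it in three steps.

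First, by the mean value theorem and $\varphi(1)=\varphi(R)=0$, there is a point $r_*$ with $\partial_r\varphi(r_*)=0$.

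Second, we compute the derivative of $h=r(\partial_r\varphi)^2$, using $\partial_r^2\varphi=\omega+\frac{k^2-1/4}{r^2}\varphi$:
\begin{equation*}
\partial_rh=(\partial_r\varphi)^2+2r\,\partial_r\varphi\,\omega+\frac{2(k^2-\frac14)}{r}\varphi\,\partial_r\varphi .
\end{equation*}
Integrating from $r_*$ bounds $\|h\|_{L^\infty}$ by $\int|\partial_rh|\,dr$, and we estimate the three terms separately:
\begin{itemize}
\item $\|\partial_r\varphi\|_{L^2}^2\lesssim|k|^{-2}\|r\omega\|_{L^2}^2$ by the first inequality;
\item $\|\partial_r\varphi\|_{L^2}\|r\omega\|_{L^2}\lesssim|k|^{-1}\|r\omega\|_{L^2}^2$, by Cauchy--Schwarz and the first inequality;
\item $k^2\|r^{-1}\varphi\|_{L^2}\|\partial_r\varphi\|_{L^2}\lesssim k^2\cdot|k|^{-2}\cdot|k|^{-1}\|r\omega\|_{L^2}^2=|k|^{-1}\|r\omega\|_{L^2}^2$, by Cauchy--Schwarz and the first inequality.
\end{itemize}
All three are $\lesssim|k|^{-1}\|r\omega\|_{L^2}^2$, so $\|r^{1/2}\partial_r\varphi\|_{L^\infty}\lesssim|k|^{-1/2}\|r\omega\|_{L^2}$.

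Third, we only expect the factor $(R/(R-1))^{1/2}$ to be needed when the annulus is thin. In that case the mean-value point $r_*$ is combined with an averaging argument: write $h(r)$ as its average over $[1,R]$ plus a fluctuation controlled by $\int|\partial_rh|$, and bound the average by
\begin{equation*}
\frac{1}{R-1}\int r(\partial_r\varphi)^2\,dr\le\frac{R}{R-1}\|\partial_r\varphi\|_{L^2}^2 .
\end{equation*}
This explains the $R/(R-1)$ factor. We expect that checking the $k$-powers in this last step will require the most care.
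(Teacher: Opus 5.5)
Your proof is correct, and it takes a genuinely different route from the paper. The paper gives no argument for Lemma~\ref{lem2.1}: it defers to \cite{An.2024} and only lists ingredients (radial Sobolev embeddings, Bessel-function analysis, Hardy-type inequalities). You instead use only three elementary tools: the energy identity with Cauchy--Schwarz absorption, the one-dimensional bound $\|g\|_{L^\infty}^2\le 2\int_1^R|g||\partial_r g|\,dr$ for $g=r^{-1/2}\varphi$, and the identity for $\partial_r(r|\partial_r\varphi|^2)$ obtained from the equation.

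This is self-contained, gives absolute constants, and proves slightly more than stated. The bound on $|k|\|r^{-1/2}\varphi\|_{L^\infty}$ never involves $R/(R-1)$. Moreover, once a point with $\partial_r\varphi(r_*)=0$ is available, your second step already gives $\|r^{1/2}\partial_r\varphi\|_{L^\infty}\lesssim|k|^{-1/2}\|r\omega\|_{L^2}$ with no aspect-ratio factor. So your third step is superfluous, and it is misdescribed: averaging is not combined with $r_*$ but replaces it, via $\|h\|_{L^\infty}\le\frac{1}{R-1}\int_1^R h\,dr+\int_1^R|\partial_r h|\,dr$. The first term is at most $\frac{R}{R-1}\|\partial_r\varphi\|_{L^2}^2\lesssim\frac{R}{R-1}|k|^{-2}\|r\omega\|_{L^2}^2$, so the $k$-powers need no special care. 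This averaging version is what produces the factor $(R/(R-1))^{1/2}$ in the statement.

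The one step to repair is the use of Rolle's theorem. In the application $\varphi=\varphi_k$ is complex-valued, and a complex function vanishing at $r=1,R$ need not have a critical point; take $e^{2\pi i(r-1)/(R-1)}-1$. There are two fixes:
\begin{enumerate}
\item Run your second step on $\mathrm{Re}\,\varphi$ and $\mathrm{Im}\,\varphi$ separately. This is legitimate because $\partial_r^2-\frac{k^2-1/4}{r^2}$ has real coefficients, so these solve the same Dirichlet problem with data $\mathrm{Re}\,\omega$ and $\mathrm{Im}\,\omega$, each pointwise bounded by $|\omega|$.
\item Use the averaging bound above instead.
\end{enumerate}
For complex $\varphi$ you should also write $h=r|\partial_r\varphi|^2$ and use $\partial_r|g|^2$ instead of $\partial_r(g^2)$. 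Then $\partial_r h=|\partial_r\varphi|^2+2r\,\mathrm{Re}(\omega\,\overline{\partial_r\varphi})+\frac{2(k^2-1/4)}{r}\mathrm{Re}(\varphi\,\overline{\partial_r\varphi})$, and your three term-by-term estimates go through unchanged.
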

\begin{lemma}[Axisymmetric mode estimates] \label{lem2.2}
For $k=0$, solutions to $\left(\partial_r^2 + r^{-1}\partial_r\right) \varphi = \omega$ with $\varphi|_{r=1,R} = 0$ satisfy:
\begin{equation}
\|\partial_r \varphi\|_{L^\infty} \lesssim \left(\frac{R}{R-1}\right)^{1/2} (1 + \log R) \|r^{3/2} \omega\|_{L^2}
\end{equation}
\end{lemma}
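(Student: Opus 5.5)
The plan is to integrate the radial ODE explicitly, since for $k=0$ the operator is exactly a divergence in $r$. Write
$$\partial_r^2\varphi+r^{-1}\partial_r\varphi=r^{-1}\partial_r\bigl(r\,\partial_r\varphi\bigr)=\omega ,$$
so that $\partial_r(r\partial_r\varphi)=r\omega$. Set $F(r):=\int_1^r s\,\omega(s)\,ds$. Then
$$r\,\partial_r\varphi(r)=c+F(r)$$
for a constant $c$ that is not yet determined. The first step is to fix $c$ from the two Dirichlet conditions. Since $\varphi(1)=\varphi(R)=0$, we have $\int_1^R\partial_r\varphi\,dr=0$, that is,
$$c\log R+\int_1^R \frac{F(r)}{r}\,dr=0 .$$
Hence
$$c=-\frac{1}{\log R}\int_1^R\frac{F(r)}{r}\,dr ,$$
which is a weighted average of $F$ against the probability measure $dr/(r\log R)$ on $[1,R]$. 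This gives $|c|\le\sup_{[1,R]}|F|$. This is the only point where the boundary conditions enter. I expect it to be the mildly delicate step: one must check that the normalization by $\log R$ does not produce a bad factor, and the averaging interpretation shows it does not.

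Next, bound $\sup|F|$ by Cauchy--Schwarz, splitting the weight as $s=s^{-1/2}\cdot s^{3/2}$:
$$|F(r)|\le\int_1^R s^{-1/2}\,s^{3/2}|\omega(s)|\,ds\le\Bigl(\int_1^R\frac{ds}{s}\Bigr)^{1/2}\|r^{3/2}\omega\|_{L^2}=(\log R)^{1/2}\|r^{3/2}\omega\|_{L^2}.$$
Combining this with the bound on $c$ and using $r\ge1$ gives
$$|\partial_r\varphi(r)|\le\frac{|c|+|F(r)|}{r}\le 2(\log R)^{1/2}\|r^{3/2}\omega\|_{L^2}.$$

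Finally, $(\log R)^{1/2}\le 1+\log R$, and $\bigl(\tfrac{R}{R-1}\bigr)^{1/2}\ge1$, so the stated bound follows with an absolute constant. In fact the argument yields the slightly stronger factor $(\log R)^{1/2}$; the prefactor $\bigl(\tfrac{R}{R-1}\bigr)^{1/2}$ is simply kept for uniformity with Lemma \ref{lem2.1}. No step requires more than these elementary manipulations. The only care needed is to make sure the constant of integration is controlled uniformly in $R$, which the averaging argument handles.
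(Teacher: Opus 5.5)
Your proof is correct, and it takes a different route from the one the paper indicates.

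The paper gives no self-contained proof of this lemma. It defers to An et al.\ and sketches a method based on weighted energy estimates, radial Sobolev embeddings and Hardy-type inequalities. This is the same machinery behind Lemma \ref{lem2.1}, and presumably where the lossy factors $\bigl(\tfrac{R}{R-1}\bigr)^{1/2}$ and $1+\log R$ come from.

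You instead use the exact divergence structure $r^{-1}\partial_r(r\partial_r\varphi)=\omega$ of the $k=0$ operator and integrate once. You then fix the constant through $\int_1^R\partial_r\varphi\,dr=0$, reading $c$ as an average of $F$ against the probability measure $dr/(r\log R)$. This is fully elementary and gives the strictly stronger bound $\|\partial_r\varphi\|_{L^\infty}\le 2(\log R)^{1/2}\|r^{3/2}\omega\|_{L^2}$.

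That bound is sharp. For $\omega=r^{-2}$ your formulas give $F=\log r$ and $c=-\tfrac12\log R$. Hence $|\partial_r\varphi(1)|=\tfrac12(\log R)^{1/2}\|r^{3/2}\omega\|_{L^2}$. Unlike the stated estimate, it also does not lose a factor $(R-1)^{-1/2}$ in thin annuli; it actually gains $(\log R)^{1/2}\approx (R-1)^{1/2}$ there.

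What the energy/embedding approach buys in exchange is uniformity across azimuthal modes. The same estimates yield Lemma \ref{lem2.1} with its $|k|$-gains. Your explicit integration is tailored to $k=0$: for $k\neq0$ you would first have to conjugate by the homogeneous solutions $r^{1/2\pm|k|}$ and track the $k$-dependence by hand.
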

\begin{proof}
While detailed proofs appear in \cite{An.2024}, and we omit the detail proof, the core methodology involves:
\begin{enumerate}
\item Weighted energy estimates with radial Sobolev embeddings
\item Bessel function analysis for azimuthal Fourier modes
\item Boundary-aware Hardy-type inequalities exploiting annular geometry
\end{enumerate}
Key modifications include our cylindrical coordinate framework and explicit dependence on aspect ratio $(R-1)^{-1}$.
\end{proof}

 \section{Derivation of the Perturbative Equation }
The linearized 2D  Boussinesq equation around the Taylor-Couette  flow take the form
\begin{equation}\label{3.1}
    \left\{
\begin{aligned}
    &   \partial_t \omega-\nu \left(\partial_r^2+\frac{1}{r} \partial_r+\frac{1}{r^2} \partial_\theta^2\right)\omega+\left(A+\frac{B}{r^2}\right) \partial_\theta \omega+\frac{1}{r}\left(\partial_r \varphi \partial_\theta \omega-\partial_\theta \varphi \partial_r \omega\right)\\
    &\quad=\cos\theta\partial_{r}\rho-\frac{\sin\theta}{r}\partial_{\theta}\rho, \\
    &   \partial_t \rho-\nu\left(\partial_r^2+\frac{1}{r} \partial_r+\frac{1}{r^2} \partial_\theta^2\right) \rho+\left(A+\frac{B}{r^2}\right) \partial_\theta \rho+\frac{1}{r}\left(\partial_r \varphi \partial_\theta \rho-\partial_\theta \varphi \partial_r \rho\right)=0, \\
&   \left(\partial_r^2+\frac{1}{r} \partial_r+\frac{1}{r^2} \partial_\theta^2\right) \varphi=\omega,\\
&\left (\omega,\rho)\right|_{r=1, R}=0 \quad \text { with }(r, \theta) \in[1, R] \times \mathbb{S}^1 \text { and } t \geq 0 .
    \end{aligned}
\right.
\end{equation}
Applying a Fourier decomposition in the $\theta$ direction, we express the perturbation fields as:
\[
\omega(t,r,\theta) = \sum_{k\in\mathbb{Z}} \hat{\omega}_k(t,r) e^{ik\theta}, \quad \rho(t,r,\theta) = \sum_{k\in\mathbb{Z}} \hat{\rho}_k(t,r) e^{ik\theta}.
\]
Substituting these expansions into \eqref{3.1} and projecting onto each Fourier mode \(k\), we obtain the coupled system \eqref{3.2}:
\begin{equation}\label{3.2}
    \left\{
    \begin{aligned}
        &   \partial_t \hat{\omega}_k-\nu\left(\partial_r^2+\frac{1}{r} \partial_r-\frac{k^2}{r^2} \right) \hat{\omega}_k+\left(A+\frac{B}{r^2}\right) ik \hat{\omega}_k\\
  &\quad+\frac{1}{r}\left[ \sum_{l \in \mathbb{Z}} i (k-l)\partial_r\hat{\varphi}_l \hat{\omega}_{k-l}- \sum_{l \in \mathbb{Z}} i l  \hat{\varphi}_l \partial_r\hat{w}_{k-l}\right]\\
        &\quad=\frac{\partial_{r}(\hat{\rho}_{k-1}+\hat{\rho}_{k+1})}{2}+ \frac{(k+1)\hat{\rho}_{k+1}-(k-1)\hat{\rho}_{k-1}}{2r},\\
        &   \partial_t \hat{\rho}_k-\nu\left(\partial_r^2+\frac{1}{r} \partial_r-\frac{k^2}{r^2} \right) \hat{\rho}_k+\left(A+\frac{B}{r^2}\right) ik \hat{\rho}_k \\
  &\quad+\frac{1}{r}\left[ \sum_{l \in \mathbb{Z}} i (k-l)\partial_r\hat{\varphi}_l \hat{\rho}_{k-l}- \sum_{l \in \mathbb{Z}} i l  \hat{\varphi}_l \partial_r\hat{\rho}_{k-l}\right],\\
        &   \left(\partial_r^2+\frac{1}{r} \partial_r-\frac{k^2}{r^2}\right) \hat{\varphi}_k=\hat{\omega}_k,\left.\quad (\hat{\omega}_k,\hat{\rho}_k)\right|_{r=1, R}=0  .
    \end{aligned}
    \right.
\end{equation}
To eliminate the first-order derivative terms $\frac{1}{r} \partial_r$, we introduce the weight $r^{\frac{1}{2}}$ and define $\omega_k,\varphi_k$ and $\rho_k$ as follows:$$\omega_k=r^{\frac{1}{2}} e^{i k A t} \hat{\omega}_k, \quad \varphi_k=r^{\frac{1}{2}} e^{i k A t} \hat{\varphi}_k,\quad \rho_k=r^{\frac{1}{2}} e^{i k A t} \hat{\rho}_k,$$ which transforms \eqref{3.2} into the following form
\begin{equation}\label{3.3}
    \left\{
    \begin{aligned}
        &\partial_t\omega_k(t,r)+\mathcal{L}_\nu{\omega}_k(t,r)+\frac{1}{r}\left[i k \sum_{l \in \mathbb{Z}} \partial_r\left(r^{-\frac{1}{2}} \varphi_l\right) \omega_{k-l}-r^{\frac{1}{2}} \partial_r\left(\sum_{l \in \mathbb{Z}} i l r^{-1} \varphi_l \omega_{k-l}\right)\right]\\
   &=\frac{e^{iAt}}{2}\left[ r^{1/2}\partial_{r}(r^{-1/2}\rho_{k-1})\right]
        +\frac{e^{-iAt}}{2}\left[ r^{1/2}\partial_{r}(r^{-1/2}\rho_{k+1})\right]\\
  &\quad+e^{iAt}\frac{k+1}{2r}\rho_{k+1}-e^{-iAt}\frac{k-1}{2r}\rho_{k-1},\\
        &\partial_t\rho_k(t,r)+\mathcal{L}_\nu\rho_k(t,r)
  +\frac{1}{r}\left[i k \sum_{l \in \mathbb{Z}} \partial_r\left(r^{-\frac{1}{2}} \varphi_l\right) \rho_{k-l}-r^{\frac{1}{2}} \partial_r\left(\sum_{l \in \mathbb{Z}} i l r^{-1} \varphi_l \rho_{k-l}\right)\right] =0, \\
    \end{aligned}
    \right.
\end{equation}
where $$\mathcal{L}_\nu=\nu\left(\partial_r^2-\frac{k^2-\frac{1}{4}}{r^2}\right) +\frac{i k B}{r^2}.$$
For the linearized equation, we consider the Navier-slip boundary condition for the velocity and the Dirichlet  boundary condition for the temperature.
Thus and for $k\neq 0,$
$$\left.\quad (\omega_k,\varphi_k,\rho_k)\right|_{r=1, R}=0.$$

The canonical approach to linear stability assessment involves spectral analysis of the linearized dynamics. We seek modal solutions of the form:
\begin{align*}
\omega_k(t,r) &= \hat{\omega}_k(r) e^{-ikB\lambda t} \\
\varphi_k(t,r) &= \hat{\varphi}_k(r) e^{-ikB\lambda t} \\
\rho_k(t,r) &= \hat{\rho}_k(r) e^{-ikB\lambda t}
\end{align*}
where circumflexes denote complex-valued radial profiles. These ansatzes transform the system into the modified Orr-Sommerfeld equations:
\begin{equation}\label{3.4}
\begin{cases}
(\mathcal{L}_\nu - ikB\lambda)\hat{\omega}_k = \dfrac{e^{iAt}}{2} r^{1/2}\partial_r(r^{-1/2}\hat{\rho}_{k-1}) + \dfrac{e^{-iAt}}{2} r^{1/2}\partial_r(r^{-1/2}\hat{\rho}_{k+1}) \\
\quad + e^{iAt}\dfrac{k+1}{2r}\hat{\rho}_{k+1} - e^{-iAt}\dfrac{k-1}{2r}\hat{\rho}_{k-1} \\[2ex]
(\mathcal{L}_\nu - ikB\lambda)\hat{\rho}_k = 0
\end{cases}
\end{equation}
Linear instability is confirmed when nontrivial solutions exist for $\lambda \in \mathbb{C}$, $k > 0$ with $\text{Im}\lambda > 0$.

The vorticity equation in \eqref{3.4} contains a distinctive buoyancy term absent in standard Navier-Stokes linearization. Crucially, the thermal equation decouples, permitting direct application of established Navier-Stokes resolvent theory. The central mathematical challenge reduces to deriving resolvent bounds for the coupled system:
\begin{equation}\label{3.5}
\begin{cases}
(\mathcal{L}_\nu - ikB\lambda)\hat{\omega}_k = \nu\left(\partial_r^2 - \frac{k^2 - 1/4}{r^2}\right) \hat{\omega}_k + ikB\left(\frac{1}{r^2} - \lambda\right)\hat{\omega}_k = F \\[1ex]
\left(\partial_r^2 - \frac{k^2 - 1/4}{r^2}\right) \hat{\varphi}_k = \hat{\omega}_k \\[1ex]
(\hat{\omega}_k,\hat{\varphi}_k)|_{r=1,R} = 0
\end{cases}
\end{equation}
Resolvent estimates for \eqref{3.5} were rigorously established in \cite{An.2024}. Building upon this foundation, we adapt their methodology to derive compatible linear estimates for velocity and vorticity fields.

Second, we analyze the linearized temperature equation derived from:
\begin{equation}\label{3.6}
    \left\{
    \begin{aligned}
        &(\mathcal{L}_\nu-ikB\lambda)\hat{\rho}_k(t,r)=\nu\left(\partial_r^2-\frac{k^2-\frac{1}{4}}{r^2}\right) \hat{\rho}_k+i k B(\frac{1}{r^2} -\lambda)\hat{\rho}_k =F,\\
        &\hat{\rho}_k|_{r=1, R}=0,
    \end{aligned}
    \right.
\end{equation}
which shares the same linearized operator as the system in
\eqref{3.5}. To establish the required space-time estimates for this
problem, we employ fundamental resolvent estimates for the operator
$\mathcal{L}_\nu-ikB\lambda$. This approach leverages the spectral
properties of the linearized operator, and the detailed
analysis-including the treatment of the singular terms involving
$r^{-2}$ and the boundary conditions are systematically developed in
Section \ref{sub4.2}.
\section{ Resolvent estimates for Orr-Sommerfeld equation}
In this section, we analyze resolvent estimates for the linearized operator, with particular emphasis on the vorticity equation due to its shared linear structure with the temperature equation. Following a Fourier transform in the temporal variable $t$, the resolvent system reduces to the modified form:
\begin{equation}\label{4.1}
    \left\{
    \begin{aligned}
        &(\mathcal{L}_\nu-ikB\lambda)\omega=\nu\left(\partial_r^2-\frac{k^2-\frac{1}{4}}{r^2}\right) \omega+i k B(\frac{1}{r^2} -\lambda)\omega =F,\\
        &   \left(\partial_r^2-\frac{k^2-\frac{1}{4}}{r^2}\right)  \varphi=\omega,\\
        &   \left. (\omega,\varphi)\right|_{r=1, R}=0.\
    \end{aligned}
    \right.
\end{equation}
The domain of the operator is defined as
{\small$$
D_k=
    \left\{\omega\in H_{l o c}^2\left(\mathbb{R}_{+}, d r\right) \cap L^2\left(\mathbb{R}_{+}, d r\right):-\nu\left(\partial_r^2-\frac{k^2-\frac{1}{4}}{r^2}\right) \omega+i \frac{k B}{r^2} \omega\in L^2\left(\mathbb{R}_{+}, d r\right)\right\}.
$$}
We further define the following function space norms:
$$
\|f\|_{H_r^1}^2:=\left\|f^{\prime}\right\|_{L^2}^2+\left\|\frac{f}{r}\right\|_{L^2}^2 \text { and }\|f\|_{H_r^{-1}}:=\sup _{\|g\|_{H_r^1} \leq 1}|\langle f, g\rangle|,
$$
where $\langle\cdot,\cdot \rangle$ denotes the standard $L^2$ inner product over $\mathbb{R}^{+}$ with respect to the measure $dr$. Within this framework, we derive resolvent estimates for the quantities $\omega^{\prime}, \dfrac{\omega}{r}, \varphi^{\prime}$ and $\dfrac{\varphi}{r}$ in relation to the forcing term $F$. These estimates are established rigorously in both the $L^2$ norm and the negative Sobolev norm $H_r^{-1}$.

\begin{proposition}[Adapted from Proposition 3.2 in \cite{An.2024}]\label{pro4.1}
Consider the solution $\omega \in D_k$ of equation \eqref{4.1} for any $|k| \geq 1$, $\lambda \in \mathbb{R}$. There exist positive constants $C$ and $c$, independent of $\nu$, $k$, $B$, $\lambda$, and $R$, such that for any $0 \leq c^{\prime} \leq c$, the following estimates hold:

1.  Vorticity Field $\omega$ Estimates:
    $$
    \begin{aligned}
    \nu^{\frac{2}{3}} |k B|^{\frac{1}{3}} \| \omega^{\prime} \|_{L^2} + \nu^{\frac{1}{3}} |k B|^{\frac{2}{3}} \left\| \frac{\omega}{r} \right\|_{L^2} \leq C \left\| r F - c^{\prime} \nu^{\frac{1}{3}} |k B|^{\frac{2}{3}} \frac{\omega}{r} \right\|_{L^2}.
    \end{aligned}
    $$

2.  Stream Function $\varphi$ Estimates:
    $$
    \begin{aligned}
    \nu^{\frac{1}{6}} |k B|^{\frac{5}{6}} |k|^{\frac{1}{2}} \left( \| \varphi^{\prime} \|_{L^2} + |k| \left\| \frac{\varphi}{r} \right\|_{L^2} \right) \leq C R^2 \left[ \left( \frac{\nu}{|k B|} \right)^{\frac{1}{6}} (\log R)^{\frac{1}{2}} + 1 \right] \cdot \\ \left\| r F - c^{\prime} \nu^{\frac{1}{3}} |k B|^{\frac{2}{3}} \frac{\omega}{r} \right\|_{L^2}.
    \end{aligned}
    $$

Furthermore, the following additional estimates are satisfied:

3. Alternative $\omega$ Estimates:
    $$
    \nu \| \omega \|_{H_r^1} + \nu^{\frac{2}{3}} |k B|^{\frac{1}{3}} \left\| \frac{\omega}{r} \right\|_{L^2} \leq C \left\| F - c^{\prime} \nu^{\frac{1}{3}} |k B|^{\frac{2}{3}} R^{-2} \omega \right\|_{H_r^{-1}}.
    $$

4. Alternative $\varphi$ Estimates:
    $$
    \nu^{\frac{1}{2}} |k B|^{\frac{1}{2}} \| \varphi^{\prime} \|_{L^2} + \nu^{\frac{1}{2}} |k| |k B|^{\frac{1}{2}} \left\| \frac{\varphi}{r} \right\|_{L^2} \leq C R^2 \left\| F - c^{\prime} \nu^{\frac{1}{3}} |k B|^{\frac{2}{3}} R^{-2} \omega \right\|_{H_r^{-1}}.
    $$
\end{proposition}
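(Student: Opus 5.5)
The plan follows the resolvent method of \cite{An.2024}. Multiply \eqref{4.1} by suitable test functions and combine real-part (dissipative) and imaginary-part (mixing) identities. The key structural fact is that the multiplier $ikB(r^{-2}-\lambda)$ is purely imaginary and monotone in $r$. Set $\tilde F = F - c'\nu^{1/3}|kB|^{2/3}r^{-2}\omega$ in the $L^2$ part (respectively $F - c'\nu^{1/3}|kB|^{2/3}R^{-2}\omega$ in the $H^{-1}$ part). Then $\omega$ solves the same equation with forcing $\tilde F$, up to a perturbation of size $c'\nu^{1/3}|kB|^{2/3}\|\omega/r\|$. Once the estimates are proved with $c'=0$ and forcing $\tilde F$, this perturbation is absorbed for $c'\le c$ small. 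So it suffices to prove each estimate with $F$ on the right, and then absorb.

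\textbf{Step 1 (energy identity).} Pair \eqref{4.1} with $\bar\omega$ and take real parts; this gives
\[
\nu\bigl(\|\omega'\|_{L^2}^2+(k^2-\tfrac14)\|\omega/r\|_{L^2}^2\bigr)=-\mathrm{Re}\langle F,\omega\rangle .
\]
Imaginary parts give $|kB|\,|\langle (r^{-2}-\lambda)\omega,\omega\rangle|\le|\langle F,\omega\rangle|$.

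\textbf{Step 2 (localization near the critical radius).} Let $r_\lambda=\lambda^{-1/2}$ (when $\lambda\in(R^{-2},1)$; other $\lambda$ are non-degenerate and easier). Split $[1,R]$ into the layer $|r-r_\lambda|\le\delta r_\lambda$ and its complement. Off the layer, $|r^{-2}-\lambda|\gtrsim\delta r^{-2}$. Pairing with $\bar\omega\,\chi\,\mathrm{sgn}(r^{-2}-\lambda)$, with $\chi$ a smooth cutoff, bounds $|kB|\delta\|\omega/r\|_{L^2(\text{off})}^2$ by $|\langle rF,\omega/r\rangle|$ plus commutator terms $\nu\|\omega'\|\|\omega\|/(\delta r_\lambda)$. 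Inside the layer, Step 1 together with a one-dimensional interpolation controls $\|\omega/r\|_{L^2(\text{layer})}^2\lesssim \delta r_\lambda\|\omega\|_{L^\infty}^2$. Optimizing gives $\delta\sim(\nu/|kB|)^{1/3}$, which yields estimate 1 with the $\nu^{1/3}|kB|^{2/3}$ and $\nu^{2/3}|kB|^{1/3}$ weights. The weight $r$ on $F$ appears naturally because the multiplier scales like $r^{-2}$.

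\textbf{Step 3 (estimates 3 and 4, $H^{-1}$ forcing).} Redo Step 1 using $|\langle F,\omega\rangle|\le\|F\|_{H_r^{-1}}\|\omega\|_{H_r^1}$; this gives $\nu\|\omega\|_{H^1_r}\lesssim\|F\|_{H^{-1}_r}$ directly. The $\nu^{2/3}|kB|^{1/3}\|\omega/r\|$ term follows from the same layer argument, now with $\|F\|_{H^{-1}}$ measured against the cutoff test function. The loss of $R^{-2}$ in the absorbed term comes from $r^{-2}\ge R^{-2}$.

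\textbf{Step 4 (stream function, estimates 2 and 4).} Pair \eqref{4.1} with $\bar\varphi$ instead. Since $\langle\omega,\varphi\rangle=-\|\varphi'\|^2-(k^2-\tfrac14)\|\varphi/r\|^2$, the imaginary part yields
\[
|kB|\,|\langle (r^{-2}-\lambda)\omega,\varphi\rangle| \le |\langle F,\varphi\rangle|+\nu|\langle\omega,\varphi''-\tfrac{k^2-1/4}{r^2}\varphi\rangle| .
\]
Write $(r^{-2}-\lambda)\omega=(r^{-2}-\lambda)\Delta_k\varphi$ and integrate by parts. This produces $\langle (r^{-2}-\lambda)\varphi',\varphi'\rangle$ plus the term $\langle (r^{-2})'\varphi,\varphi'\rangle$, which is controlled by Lemma \ref{lem2.1}. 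Near $r_\lambda$, the factor $r^{-2}-\lambda$ degenerates linearly, and one uses the $L^\infty$ bound from Lemma \ref{lem2.1} on the layer of width $\delta r_\lambda$. This is where the factors $R^2$ and $(\nu/|kB|)^{1/6}(\log R)^{1/2}$ enter: the logarithm arises from summing dyadic pieces of $\int r^{-1}dr$ over $[1,R]$. For estimate 4, one uses the $H^{-1}$ pairing and interpolates with $\nu\|\omega\|_{H_r^1}$ from Step 3.

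\textbf{Main obstacle.} I expect the hardest part to be Step 4 with $L^2$ forcing. There one must trade the degenerate weight $|r^{-2}-\lambda|$ for inviscid-damping gain $|k|^{1/2}$ while controlling $\varphi$ inside the critical layer. Keeping the dependence on $R$ explicit requires care, since $r_\lambda$ can sit anywhere in $[1,R]$ and the weights $r$ versus $r^{-1}$ cost powers of $R$. I would first follow \cite{An.2024} verbatim; only the tracking of $c'$-absorption and $R$-powers is new.
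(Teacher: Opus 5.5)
The paper gives no proof of this proposition: Proposition~\ref{pro4.1} is imported from Proposition 3.2 of \cite{An.2024}, so deferring to that source matches the paper. However, two steps you add would not go through as written.

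\textbf{First, the reduction to $c'=0$ fails for estimates 3 and 4.}

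For estimates 1--2 absorption is fine. There the shift costs exactly $c'\nu^{1/3}|kB|^{2/3}\|\omega/r\|_{L^2}$, which estimate 1 controls.

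In estimate 3 the shift is $\epsilon\omega$ with $\epsilon=c'\nu^{1/3}|kB|^{2/3}R^{-2}$, measured in $H^{-1}_r$. In general the best bound is $\|\epsilon\omega\|_{H^{-1}_r}\le\epsilon\|r\omega\|_{L^2}\le c'\nu^{1/3}|kB|^{2/3}\|\omega/r\|_{L^2}$. The left side of estimate 3 only contains $\nu\|\omega\|_{H^1_r}+\nu^{2/3}|kB|^{1/3}\|\omega/r\|_{L^2}$, which is smaller by the factor $(\nu/|kB|)^{1/3}\ll1$. To see this, test with a fixed smooth bump and let $\nu\to0$. No smallness of $c'$ closes this, and estimate 4 inherits the defect.

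You need an extra argument. Either of the following works:
\begin{enumerate}
\item Keep the shift inside the multiplier computations. Being a real multiple of $\omega$, it drops out of every imaginary-part identity. In the energy identity, $\epsilon\|\omega\|_{L^2}^2\le c'\nu^{1/3}|kB|^{2/3}\|\omega/r\|_{L^2}^2$ is absorbed, because the layer argument bounds $\nu^{1/3}|kB|^{2/3}\|\omega/r\|^2_{L^2}$ by $\nu\|\omega'\|^2_{L^2}$ plus forcing terms.
\item For estimate 3, split $\omega=\omega^a+\omega^b$ with $(\mathcal{L}_\nu-ikB\lambda)\omega^a=\tilde F$ and $(\mathcal{L}_\nu-ikB\lambda-\epsilon)\omega^b=\epsilon\omega^a$. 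Estimate 1, where the shift is absorbable, gives $\|\omega^b/r\|_{L^2}\lesssim c'\|\omega^a/r\|_{L^2}$ and $\nu\|\partial_r\omega^b\|_{L^2}\lesssim c'\nu^{2/3}|kB|^{1/3}\|\omega^a/r\|_{L^2}$. Both are controlled by the unshifted estimate 3 for $\omega^a$.
\end{enumerate}

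\textbf{Second, Step 4 does not produce a coercive quantity.}

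Since $\omega$ and $\varphi$ both vanish at $r=1,R$, pairing \eqref{4.1} with $\varphi$ gives $\nu\langle(\partial_r^2-\frac{k^2-1/4}{r^2})\omega,\varphi\rangle=\nu\|\omega\|_{L^2}^2$, which is real. The imaginary part is therefore exactly
\[
kB\Bigl[-\int_1^R\Bigl(\frac{1}{r^2}-\lambda\Bigr)\Bigl(|\varphi'|^2+\frac{k^2-\frac14}{r^2}|\varphi|^2\Bigr)dr+3\int_1^R\frac{|\varphi|^2}{r^4}\,dr\Bigr]=\mathrm{Im}\langle F,\varphi\rangle .
\]
The weight $r^{-2}-\lambda$ changes sign at $r_\lambda$, so the contributions from $r<r_\lambda$ and $r>r_\lambda$ can cancel. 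The identity controls neither $\|\varphi'\|_{L^2}$ nor $\int|r^{-2}-\lambda|\,|\varphi'|^2dr$. So there is no degenerate weight to trade for the $|k|^{1/2}$ gain.

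The standard route starts instead from $\|\varphi'\|_{L^2}^2+(k^2-\frac14)\|\varphi/r\|_{L^2}^2=-\langle\omega,\varphi\rangle$:
\begin{enumerate}
\item On the layer, bound the pairing by $\|r^{1/2}\omega\|_{L^1(\text{layer})}\|r^{-1/2}\varphi\|_{L^\infty}$, using $\|r^{-1/2}\varphi\|_{L^\infty}\lesssim|k|^{-1/2}(\|\varphi'\|_{L^2}+|k|\|\varphi/r\|_{L^2})$. This is where the $|k|^{1/2}$ gain comes from, and, through $r_\lambda\le R$, the factor $R^2$.
\item Off the layer, replace $ikB(r^{-2}-\lambda)\omega$ by $F-\nu(\partial_r^2-\frac{k^2-1/4}{r^2})\omega$. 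Then move the viscous derivatives onto $\chi\varphi/(r^{-2}-\lambda)$, with $\chi$ the off-layer cutoff.
\item Close with estimate 1 (respectively 3) for $\omega$ and $\omega'$.
\end{enumerate}
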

\section{Space-time estimate of the linearized Boussinesq equation}

This section develops space-time estimates for the linearized two-dimensional Boussinesq system. 
We employ Fourier series decomposition in the angular variable $\theta \in \mathbb{T}$, expressing the fields as:
\begin{equation}
\begin{aligned}
\rho(t,\theta,r) &= \sum_{k \in \mathbb{Z}} \rho_k(t,r) e^{ik\theta}, \\
\omega(t,\theta,r) &= \sum_{k \in \mathbb{Z}} \omega_k(t,r) e^{ik\theta}, \\
u(t,\theta,r) &= \sum_{k \in \mathbb{Z}} u_k(t,r) e^{ik\theta}.
\end{aligned}
\end{equation}
Throughout our analysis, we simplify notation by omitting Fourier hats in coefficient functions.
\subsection{Space-time estimates for the vorticity}
Let us first  study the following system for $k\neq0:$
\begin{equation}\label{5.1}
    \left\{
    \begin{aligned}
    &\partial_t \omega_k+\mathcal{L}_\nu \omega_k=h_1-g\partial_{r}h_2, \\
    &\left(\partial_r^2-\frac{k^2-\frac{1}{4}}{r^2}\right)  \varphi_k=\omega_k,\\
    &\left. (\omega_k,\varphi_k)\right|_{r=1, R}=0, \quad \left. \omega_k\right|_{t=0}=\omega_k(0).\\
\end{aligned}
    \right.
\end{equation}

\begin{proposition}[Based on Proposition 5.2 in \cite{An.2024}]\label{pro5.1}
Given functions $h_1(t,r)$, $h_2(t,r)$, and $g(r)$. For any $k \in \mathbb{Z}\setminus\{0\}$ satisfying $\nu k^2 \leq |B|$ and $\log R \lesssim \nu^{-\frac{1}{3}}|B|^{\frac{1}{3}}$, let $\omega_k$ solve equation \eqref{5.1} with initial data $\omega_k(0) \in L^2$. 
Define the exponential weight:
\[
\mathcal{E}_k(f) = e^{c^{\prime}(\nu k^2)^{\frac{1}{3}}|B|^{\frac{2}{3}} R^{-2} t}f.
\]
There exists a constant $c^{\prime}>0$, independent of $\nu, B, k, R$, such that the solution satisfies:
\begin{align*}
&\mathcal{N}_1 + \mathcal{N}_2 + \mathcal{N}_3 + \mathcal{N}_4 \\
&\lesssim \mathcal{I}_1 + \mathcal{I}_2 + \mathcal{I}_3 + \mathcal{F}_1 + \mathcal{F}_2
\end{align*}
where the solution norms are:
\begin{align*}
\mathcal{N}_1 &= \|\mathcal{E}_ k(\omega_k)\|_{L^\infty_t L^2_r}, \\
\mathcal{N}_2 &= (\nu k^2)^{\frac{1}{6}}|B|^{\frac{1}{3}} R^{-1}\|\mathcal{E}_k(\omega_k)\|_{L^2_t L^2_r}, \\
\mathcal{N}_3 &= \nu^{\frac{1}{2}}\|\mathcal{E}_k(\partial_r \omega_k)\|_{L^2_t L^2_r} + (\nu k^2)^{\frac{1}{2}}\left\|\mathcal{E}_k(\frac{\omega_k}{r})\right\|_{L^2_t L^2_r}, \\
\mathcal{N}_4 &= |B|^{\frac{1}{2}} R^{-2}\left(|k|\|\mathcal{E}_k(\partial_r \varphi_k)\|_{L^2_t L^2_r} + k^2\left\|\mathcal{E}_k(\frac{\varphi_k}{r})\right\|_{L^2_t L^2_r}\right),
\end{align*}
the initial data terms are:
\begin{align*}
\mathcal{I}_1 &= \|\omega_k(0)\|_{L^2} + R^{-2}(\log R)^{-\frac{3}{2}}\|r^2 \omega_k(0)\|_{L^2} + R^3\left\|\frac{\omega_k(0)}{r^3}\right\|_{L^2}, \\
\mathcal{I}_2 &= \left(\frac{\nu}{|k B|}\right)^{\frac{1}{3}} R \|\partial_r \omega_k(0)\|_{L^2}, \\
\mathcal{I}_3 &= R\left\|\frac{\omega_k(0)}{r}\right\|_{L^2}\left(\frac{\nu k^2}{|B|}\right)^{\frac{2}{3}},
\end{align*}
and the forcing terms are:
\begin{align*}
\mathcal{F}_1 &= \nu^{-\frac{1}{6}}|k B|^{-\frac{1}{3}}\|\mathcal{E} (r h_1)\|_{L^2_t L^2_r}, \\
\mathcal{F}_2 &= \nu^{-\frac{1}{2}}\|\mathcal{E} (|g| + r|\partial_r g|) h_2 \|_{L^2_t L^2_r}.
\end{align*}
\end{proposition}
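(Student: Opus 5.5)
By linearity I will write $\omega_k=\omega_k^{(1)}+\omega_k^{(2)}+\omega_k^{(3)}$. Here $\omega_k^{(1)}$ solves the homogeneous problem \eqref{5.1} ($h_1=h_2=0$) with data $\omega_k(0)$. The piece $\omega_k^{(2)}$ solves \eqref{5.1} with forcing $h_1$ and zero data, and $\omega_k^{(3)}$ solves it with forcing $-g\partial_r h_2$ and zero data. Since each norm $\mathcal{N}_j$ is subadditive, it suffices to bound each piece separately. The homogeneous part is handled exactly as in Proposition 5.2 of \cite{An.2024}; that argument produces $\mathcal{I}_1+\mathcal{I}_2+\mathcal{I}_3$, and I will only recall it. Its key points are the following. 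Energy estimates give $\mathcal{N}_1,\mathcal{N}_3$. Enhanced dissipation $\mathcal{N}_2$ comes from a Gearhart--Prüss type argument using item 1 of Proposition \ref{pro4.1}. Inviscid damping $\mathcal{N}_4$ is obtained by writing $\omega_k^{(1)}=e^{-t\mathcal L}\omega_k(0)$ and decomposing the data via $\omega_k(0)=\mathcal L(\mathcal L^{-1}\omega_k(0))$. This decomposition is the source of the weighted norms $\|r^2\omega_k(0)\|$, $\|\omega_k(0)/r^3\|$ and the $\log R$ factor.

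For the forced parts, I will conjugate by the exponential weight. Set $W=\mathcal{E}_k(\omega^{(j)}_k)$. Then
\[
\partial_t W+(\mathcal L_\nu-c'(\nu k^2)^{1/3}|B|^{2/3}R^{-2})W=\mathcal{E}_k(\text{forcing}).
\]
Since the data vanish, I extend by zero for $t<0$ and take the Fourier transform in $t$ with frequency $-kB\lambda$. This gives exactly the resolvent problem \eqref{4.1} with $F$ replaced by $\widehat{\mathcal{E}(\text{forcing})}+c'\nu^{1/3}|kB|^{2/3}R^{-2}\widehat W$. The $c'$-shifted right-hand sides in Proposition \ref{pro4.1} are designed to absorb this term. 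Plancherel in $t$ then converts the resolvent bounds, uniform in $\lambda\in\mathbb R$, into $L^2_tL^2_r$ bounds.

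\textbf{Term $h_1$.} Item 1 of Proposition \ref{pro4.1} gives $\nu^{1/3}|kB|^{2/3}\|W/r\|_{L^2_tL^2}\lesssim\|\mathcal{E}(rh_1)\|_{L^2L^2}$. Since $r\le R$, this controls $\mathcal N_2$ with factor $\nu^{-1/6}|kB|^{-1/3}$, which is the definition of $\mathcal F_1$. Item 2 gives the stream function bound for $\mathcal N_4$, using $\log R\lesssim \nu^{-1/3}|B|^{1/3}$ and $\nu k^2\le |B|$ to reduce $(\nu/|kB|)^{1/6}(\log R)^{1/2}\lesssim 1$. For $\mathcal N_1$ and $\mathcal N_3$, I will test the weighted equation against $W$ and take the real part. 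This yields
\[
\tfrac{d}{dt}\|W\|^2+\nu\|W'\|^2+\nu k^2\|W/r\|^2\lesssim c'(\nu k^2)^{1/3}|B|^{2/3}R^{-2}\|W\|^2+|\langle \mathcal E h_1,W\rangle|.
\]
I bound the last term by $\|\mathcal E rh_1\|\|W/r\|$ and close it with the already obtained $L^2_t$ bound on $W/r$.

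\textbf{Term $g\partial_rh_2$ (main obstacle).} The factor $\partial_r$ is the destabilizing buoyancy derivative, and here the $H^{-1}_r$ resolvent estimates (items 3 and 4) must be used. I will write
\[
\langle g\partial_rh_2,\phi\rangle=-\langle h_2,g\phi'+g'\phi\rangle.
\]
This gives
\[
\|g\partial_rh_2\|_{H_r^{-1}}\lesssim \|(|g|+r|g'|)h_2\|_{L^2},
\]
with $\phi/r$ absorbed by the $H^1_r$ norm. Item 3 then gives $\nu\|W\|_{H^1_r}$ and $\nu^{2/3}|kB|^{1/3}\|W/r\|$ in terms of this norm, and item 4 gives $\varphi$ with a factor $\nu^{-1/2}|kB|^{-1/2}R^2$. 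These exponents do not immediately match $\nu^{-1/2}$ for $\mathcal N_2$ and $\mathcal N_4$. The difficulty is to trade them correctly using $\nu k^2\le|B|$.

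For $\mathcal N_1$ and $\mathcal N_3$, I will again use the energy identity. The bad term $\langle \mathcal E h_2,(gW)'\rangle$ is bounded by $\|(|g|+r|g'|)\mathcal Eh_2\|(\|W'\|+\|W/r\|)$ and absorbed into $\nu\|W'\|^2+\nu k^2\|W/r\|^2$, which yields the $\nu^{-1/2}$ factor. For $\mathcal N_2$ I expect an interpolation is needed. The energy method controls $\nu^{1/2}\|W'\|$, hence $(\nu k^2)^{1/2}\|W/r\|$. Then
\[
(\nu k^2)^{1/6}|B|^{1/3}R^{-1}\|W\|\le (\nu k^2)^{1/6}|B|^{1/3}\|W/r\|,
\]
and this is bounded by $(\nu k^2)^{1/2}\|W/r\|$ only when $|B|\lesssim \nu k^2$. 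That is the wrong direction, so I will instead combine with item 3 to get $\nu^{2/3}|kB|^{1/3}\|W/r\|\lesssim\|\cdot\|_{H^{-1}}$. I will interpolate the two bounds, and check that the $\mathcal N_2,\mathcal N_4$ weights are dominated, possibly needing to absorb extra powers of $R$ into the constants as the statement allows. This step is where I anticipate the real work.
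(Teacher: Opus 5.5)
\textbf{Where your argument works.} Your overall route is the one the paper uses for the temperature in Lemma \ref{lem5.6}: import the homogeneous part from Proposition 5.2 of \cite{An.2024}, conjugate by $\mathcal{E}_k$, turn the resolvent bounds of Proposition \ref{pro4.1} into $L^2_tL^2_r$ bounds by Plancherel, and get $\mathcal{N}_1,\mathcal{N}_3$ from the energy identity. For Proposition \ref{pro5.1} itself the paper gives no argument beyond the citation.

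Your treatment of $h_1$ closes as written, via items 1--2 and $(\nu/|kB|)^{1/6}(\log R)^{1/2}\lesssim|k|^{-1/6}$.

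For the forcing $-g\partial_r h_2$, set $G=\mathcal{E}_k\big((|g|+r|\partial_r g|)h_2\big)$. Contrary to what you expect, $\mathcal{N}_2$ needs no interpolation. Item 3 gives $\|\mathcal{E}_k(\omega_k/r)\|_{L^2L^2}\lesssim\nu^{-2/3}|kB|^{-1/3}\|G\|_{L^2L^2}$, hence
\[
\mathcal{N}_2\le(\nu k^2)^{1/6}|B|^{1/3}\|\mathcal{E}_k(\omega_k/r)\|_{L^2L^2}\lesssim\nu^{-1/2}\|G\|_{L^2L^2}
\]
exactly. Your energy argument then gives $\mathcal{N}_1+\mathcal{N}_3\lesssim\nu^{-1/2}\|G\|_{L^2L^2}$.

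\textbf{The gap: $\mathcal{N}_4$ for the buoyancy forcing.} You leave this term open, and none of the tools you list can close it.

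Item 4 yields $|k|\big(\|\partial_r\varphi_k\|+|k|\|\varphi_k/r\|\big)\lesssim R^2\nu^{-1/2}|k|^{1/2}|B|^{-1/2}\|F\|_{H^{-1}_r}$, that is, $\mathcal{N}_4\lesssim|k|^{1/2}\nu^{-1/2}\|G\|_{L^2L^2}$. The loss is a power of $|k|$, not of $R$. You could not absorb powers of $R$ anyway, since the statement tracks $R$ explicitly with $R$-independent constants.

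The hypothesis $\nu k^2\le|B|$ only bounds $|k|$ from above, so it turns $|k|^{1/2}$ into $(|B|/\nu)^{1/4}$ rather than removing it. The other route, Lemma \ref{lem2.1} plus item 3, gives $\mathcal{N}_4\lesssim|B|^{1/2}\|\mathcal{E}_k(\omega_k/r)\|_{L^2L^2}\lesssim(|B|/\nu k^2)^{1/6}\nu^{-1/2}\|G\|_{L^2L^2}$, which loses for small $|k|$. Even the better of the two bounds is $(|B|/\nu)^{1/10}\nu^{-1/2}$ at $|k|\sim(|B|/\nu)^{1/5}$. So no interpolation among items 3, 4 and the energy bound works.

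This is not cosmetic: \eqref{6.12} uses the proposition precisely to bound the inviscid-damping part of $E_k$ by $\nu^{-1/2}\|\mathcal{E}_k(f_2)\|_{L^2L^2}$ uniformly in $k$.

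\textbf{What is missing.} You need a sharper $H^{-1}_r$ resolvent bound for the stream function:
\[
\nu^{1/2}|k|^{1/2}|kB|^{1/2}\big(\|\varphi'\|_{L^2}+|k|\|\varphi/r\|_{L^2}\big)\lesssim R^2\big\|F-c'\nu^{1/3}|kB|^{2/3}R^{-2}\omega\big\|_{H^{-1}_r}.
\]
This matches critical-layer scaling: when $\nu k^2\le|B|$, the layer width $(\nu/|kB|)^{1/3}$ is below $|k|^{-1}$, so $\varphi$ essentially sees only the mass of $\omega$.

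By the second inequality of Lemma \ref{lem2.1}, the bound would follow from $\|r^{1/2}\omega\|_{L^1}\lesssim R^2\nu^{-1/2}|kB|^{-1/2}\|F\|_{H^{-1}_r}$. Proving that requires localising $\omega$ near the critical radius $r=\lambda^{-1/2}$. The global $L^2$ bound of item 3 gives it only up to an extra factor $(|kB|/\nu)^{1/6}$.

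You must either prove such an estimate or take it, together with the homogeneous part, from \cite{An.2024}. Proposition \ref{pro4.1} as stated does not suffice.
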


Observe that when the inequality $\nu k^2R^{-2} \geq( \nu k^2)^{1/3}|B|^{2/3}R^{-2}$ holds (equivalently, when $\nu k^2 \geq  |B|$), the heat dissipation mechanism dominates over enhanced dissipation. In this regime $\nu k^2 \geq  |B|$, the subsequent space-time estimates establish stricter controls on $\omega_k$ reflecting the heightened influence of diffusive effects.
\begin{proposition} \label{pro5.2}
    For $k \in \mathbb{Z} \backslash\{0\}$ and  $\nu k^2\geq |B|$. Let $\omega_k$ be the solution to \eqref{5.1} with $\omega_k(0) \in L^2$, then there exists a constant $c^{\prime}>0$ independent of $\nu, B, k, R$, such that it holds
    $$
    \begin{aligned}
        &   \left\|\mathcal{E}_k(\omega_k)\right\|_{L^\infty L^2}
        +\nu^{\frac{1}{2}}\left\|\mathcal{E}_k(\partial_{r}\omega_k)\right\|_{L^2L^2}\\
  &+(\nu k^2)^{\frac{1}{2}}\left\|\mathcal{E}_k(\frac{\omega_k}{r})\right\|_{L^2L^2}\\
  & \quad+(\nu k^2)^{\frac{1}{2}} R^{-2}\left(|k|\left\|\mathcal{E}_k(\partial_r \varphi_k)\right\|_{L^2 L^2}+k^2\left\|\mathcal{E}_k(\frac{\varphi_k}{r})\right\|_{L^2 L^2}\right) \\
        &\leq C\left\|\omega_k(0)\right\|_{L^2}+\nu^{-\frac{1}{2}}\left\|\mathcal{E}_k(\frac{rh_1}{k})\right\|_{L^2 L^2}\\
        &\quad+\nu^{-\frac{1}{2}}\left\|\mathcal{E}_k(|g|+r|\partial_{r}g|)h_2\right\|_{L^2L^2}.
    \end{aligned}
    $$
\end{proposition}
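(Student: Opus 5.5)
Since $\nu k^2\ge |B|$, the exponential rate $c'(\nu k^2)^{1/3}|B|^{2/3}R^{-2}$ is at most $c'\nu k^2R^{-2}$, which is dominated by the plain viscous dissipation $\nu k^2\|\omega_k/r\|_{L^2}^2\ge \nu k^2R^{-2}\|\omega_k\|_{L^2}^2$. I will therefore prove Proposition \ref{pro5.2} by a direct energy estimate, with no resolvent analysis. Set $W=\mathcal{E}_k(\omega_k)$. Then
\[
\partial_tW+\mathcal{L}_\nu W-c'(\nu k^2)^{1/3}|B|^{2/3}R^{-2}W=\mathcal{E}_k(h_1)-g\,\partial_r\mathcal{E}_k(h_2),
\]
with $W|_{r=1,R}=0$. (I read the sign convention as $\partial_t\omega_k-\nu(\partial_r^2-\frac{k^2-1/4}{r^2})\omega_k+\frac{ikB}{r^2}\omega_k=\cdots$.)

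Take the $L^2$ inner product with $W$ and the real part. The term $ikB r^{-2}|W|^2$ is purely imaginary and drops out. The boundary terms vanish by the Dirichlet condition. Since $k^2-\tfrac14\ge \tfrac34k^2$, this gives
\[
\tfrac12\tfrac{d}{dt}\|W\|^2+\nu\|\partial_rW\|^2+\tfrac34\nu k^2\|W/r\|^2\le c'\nu k^2R^{-2}\|W\|^2+|\langle \mathcal{E}h_1,W\rangle|+|\langle g\partial_r\mathcal{E}h_2,W\rangle|.
\]
With $c'\le 1/4$, the first term on the right is absorbed using $\|W\|\le R\|W/r\|$.

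For the forcing terms:
\begin{itemize}
\item $|\langle \mathcal{E}h_1,W\rangle|\le \|\mathcal{E}(rh_1/k)\|\,|k|\|W/r\|\le \tfrac18\nu k^2\|W/r\|^2+C\nu^{-1}\|\mathcal{E}(rh_1/k)\|^2$.
\item For the $h_2$ term, integrate by parts; there is no boundary contribution since $W=0$ at $r=1,R$. This yields $\langle \mathcal{E}h_2,\partial_r(\bar g W)\rangle$, bounded by $\|\mathcal{E}(|g|+r|g'|)h_2\|\,(\|\partial_rW\|+\|W/r\|)$. By Young's inequality this is absorbed, with remainder $C\nu^{-1}\|\mathcal{E}(|g|+r|g'|)h_2\|^2$. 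Here $\nu\|W/r\|^2\le\nu k^2\|W/r\|^2$ because $|k|\ge1$.
\end{itemize}
Integrating in time gives the first three terms on the left-hand side of the proposition, bounded by the stated right-hand side.

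For the stream-function term I will use Lemma \ref{lem2.1}: $\|\partial_r\varphi_k\|^2+k^2\|\varphi_k/r\|^2\lesssim |k|^{-2}\|r\omega_k\|^2\le |k|^{-2}R^4\|\omega_k/r\|^2$. Hence
\[
R^{-2}\bigl(|k|\|\partial_r\mathcal{E}\varphi_k\|+k^2\|\mathcal{E}\varphi_k/r\|\bigr)\lesssim\|\mathcal{E}\omega_k/r\|,
\]
and multiplying by $(\nu k^2)^{1/2}$ and taking $L^2_t$ closes the estimate.

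The main point to check is the absorption of the exponential growth factor, i.e.\ that the implicit constant $c'$ can be taken to be the same (small) constant as in Proposition \ref{pro5.1}. I also need that the $h_2$ integration by parts is legitimate with only the stated weight $(|g|+r|g'|)$, which works because $\|W/r\|$ and $\|\partial_rW\|$ are both controlled.
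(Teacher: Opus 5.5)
Your proposal is correct and follows essentially the same route as the paper. The argument is a plain $L^2$ energy estimate in which the growth of $e^{c'(\nu k^2)^{1/3}|B|^{2/3}R^{-2}t}$ is absorbed by the damping $\nu k^2\|\omega_k/r\|_{L^2}^2\ge \nu k^2R^{-2}\|\omega_k\|_{L^2}^2$ (using $\nu k^2\ge|B|$). The $g\partial_r h_2$ term is moved onto $\partial_r(g\omega_k)$ by integration by parts, and the stream-function terms follow from Lemma \ref{lem2.1} via $\|r\omega_k\|_{L^2}\le R^2\|\omega_k/r\|_{L^2}$.

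The only difference is cosmetic: you conjugate by the weight before the energy identity, whereas the paper derives the unweighted differential inequality first and then multiplies by the weight. Your explicit bookkeeping ($c'\le \tfrac14$, $k^2-\tfrac14\ge\tfrac34k^2$) is, if anything, cleaner than the paper's.
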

\begin{proof}
By performing integration by parts on the above expression and
taking its real part, one gets
$$
\begin{aligned}
& \text{Re}\left\langle\partial_t \omega_k-\nu\left(\partial_r^2-\frac{k^2-\frac{1}{4}}{r^2}\right) \omega_k+\frac{i k B}{r^2} \omega_k-h_1+g\partial_{r}h_2, w_k\right\rangle \\
&\quad =\frac{1}{2} \partial_t\left\|\omega_k\right\|_{L^2}^2+\nu\left\|\partial_r \omega_k\right\|_{L^2}^2+\nu\left(k^2-\frac{1}{4}\right)\left\|\frac{\omega_k}{r}\right\|_{L^2}^2\\
&\quad-\text{Re}\left\langle \frac{rh_1}{k}, k \frac{\omega_k}{r}\right\rangle-\text{Re}\left\langle h_2,  \partial_r\left(g \omega_k\right)\right\rangle=0 .
\end{aligned}
$$
It infers
\begin{equation}
\begin{aligned}
   & \partial_t\|\omega_k\|_{L^2}^2+\nu\left\|\partial_r \omega_k\right\|_{L^2}^2+\nu k^2\left\|\frac{\omega_k}{r}\right\|_{L^2}^2 \lesssim\left\|\frac{rh_1}{k}\right\|_{L^2}|k|\left\|\frac{\omega_k}{r}\right\|_{L^2}\\
    &+\left\|rh_2\partial_rg\right\|_{L^2}\left\|\frac{\omega_k}{r}\right\|_{L^2}+\left\|h_2g\right\|_{L^2}\left\|\partial_r\omega_k\right\|_{L^2} .
    \end{aligned}
\end{equation}
By applying Cauchy-Schwarz inequality, we then deduce
$$
\partial_t\left\|\omega_k\right\|_{L^2}^2+\nu\left\|\partial_r \omega_k\right\|_{L^2}^2+\nu k^2\left\|\frac{\omega_k}{r}\right\|_{L^2}^2 \lesssim \nu^{-1}\left(\left\|\frac{rh_1}{k}\right\|_{L^2}^2+\left\|rh_2\partial_rg\right\|_{L^2}^2+\left\|h_2g\right\|_{L^2}^2\right) .
$$
Noticing that $\left\|\dfrac{\omega_k}{r}\right\|_{L^2} \geq R^{-1}\left\|\omega_k\right\|_{L^2}$, it follows
\begin{equation}
\begin{aligned}
&\partial_t\left\|\omega_k\right\|_{L^2}^2+\nu\left\|\partial_r \omega_k\right\|_{L^2}^2+\nu k^2 R^{-2}\left\|\omega_k\right\|_{L^2}^2+\nu k^2\left\|\frac{\omega_k}{r}\right\|_{L^2}^2 \\
&\lesssim \nu^{-1}\left(\left\|\frac{rh_1}{k}\right\|_{L^2}^2+\left\|rh_2\partial_rg\right\|_{L^2}^2+\left\|h_2g\right\|_{L^2}^2\right) .
\end{aligned}
\end{equation}
Therefore, due to $ \nu k^2R^{-2} \geq( \nu
k^2)^{1/3}|B|^{2/3}R^{-2}$,  we can multiply $e^{c^{\prime}\left(\nu
k^2\right)^{\frac{1}{3}}|B|^{\frac{2}{3}} R^{-2} t}$ on both sides
of above inequality. With $c^{\prime}$ being a small constant
independent of $\nu, B, k, R$, we can obtain
$$
\begin{aligned}
& \partial_t\left\|\mathcal{E}_k (\omega_k)\right\|_{L^2}^2+\nu\left\|\mathcal{E}_k (\partial_r \omega_k)\right\|_{L^2}^2+\nu k^2\left\|\mathcal{E}_k (\frac{\omega_k}{r})\right\|_{L^2}^2 \\
&\lesssim \nu^{-1}\left\|\mathcal{E}_k (\frac{rh_1}{k})\right\|_{L^2}^2+\nu^{-1}\left\|\mathcal{E}_k(rh_2\partial_rg)\right\|_{L^2}^2\\
&\quad+\nu^{-1}\left\|\mathcal{E}_k(
h_2g)\right\|_{L^2}^2,
\end{aligned}
$$
which further implies
$$
\begin{aligned}
& \left\|\mathcal{E}_k (\omega_k)\right\|_{L^{\infty} L^2}^2
+\nu\left\|\mathcal{E}_k (\partial_r \omega_k)\right\|_{L^2 L^2}^2\\
&+\nu k^2\left\|\mathcal{E}_k (\frac{\omega_k}{r})\right\|_{L^2 L^2}^2 \\
& \lesssim\left\|\omega_k(0)\right\|_{L^2}^2
+\nu^{-1}\left\|\mathcal{E}_k(\frac{rh_1}{k})\right\|_{L^2 L^2}^2
+\nu^{-1}\left\|\mathcal{E}_k (rh_2\partial_rg)\right\|_{L^2 L^2}^2\\
&\quad+\nu^{-1}\left\|\mathcal{E}_k (h_2g)\right\|_{L^2L^2}^2 .
\end{aligned}
$$
In view of Lemma \ref{lem2.1}, it also holds that
$$
R^{-2}\left(|k|\left\|\varphi_k^{\prime}\right\|_{L^2}+k^2\left\|\frac{\varphi_k}{r}\right\|_{L^2}\right) \lesssim R^{-2}\left\|r \omega_k\right\|_{L^2} \leq\left\|\frac{\omega_k}{r}\right\|_{L^2}.
$$
Combining these two estimates above yields
$$
\begin{aligned}
& \left\|\mathcal{E}_k (\omega_k)\right\|_{L^{\infty} L^2}+\nu^{\frac{1}{2}}\left\|\mathcal{E}_k (\partial_r \omega_k)\right\|_{L^2 L^2}\\
&\quad+\left(\nu k^2\right)^{\frac{1}{2}}\left\|\mathcal{E}_k (\frac{\omega_k}{r})\right\|_{L^2 L^2} \\
&+\left(\nu k^2\right)^{\frac{1}{2}} R^{-2}\left(|k|\left\|\mathcal{E}_k (\varphi_k^{\prime})\right\|_{L^2}+k^2\left\|\mathcal{E}_k(\frac{\varphi_k}{r})\right\|_{L^2}\right) \\
& \lesssim\left\|\omega_k(0)\right\|_{L^2}
+\nu^{-\frac{1}{2}}\left\|\mathcal{E}_k (\frac{rh_1}{k})\right\|_{L^2 L^2}+\nu^{-\frac{1}{2}}\left\|\mathcal{E}_k (rh_2\partial_rg)\right\|_{L^2 L^2}\\
&\quad+\nu^{-\frac{1}{2}}\left\|\mathcal{E}_k (h_2g)\right\|_{L^2L^2},
\end{aligned}
$$
which completes the proof.
\end{proof}
\subsection{Space-time estimate for temperature}\label{sub4.2}
Next, let us consider the linearized equation
\begin{equation}\label{5.4}
    \left\{\begin{array}{l}
    \partial_t \rho_k+\mathcal{L}_\nu \rho_k=f_1-g\partial_rf_2, \\
    \left.\rho_k\right|_{t=0}=\rho_k(0)\left.\quad \rho_k\right|_{r=1, R}=0.
\end{array}\right.
\end{equation}
First, we decompose $$\rho_k=\rho_k^l+\rho_{k}^{NL},$$
where $\rho_k^{NL}$ solves
\begin{equation}\label{5.5}
    \left\{
    \begin{aligned}
        &\partial_t\rho_k^{NL}+\mathcal{L}_\nu \rho_k^{NL}=f_1-g\partial_rf_2,\\
        &\rho_k^{NL}|_{t=0}=0,\\
        &\rho_k^{NL}|_{r=1,R}=0,
    \end{aligned}
    \right.
\end{equation}
and $\rho_k^{L}$ solves
\begin{equation}\label{5.6}
    \left\{
    \begin{aligned}
        &\partial_t\rho_k^{L}+\mathcal{L}_\nu \rho_k^{L}=0,\\
        &\rho_k^{L}|_{t=0}=\rho_k(0),\\
        &\rho_k^{L}|_{r=1,R}=0.
    \end{aligned}
    \right.
\end{equation}

The homogeneous component $\rho_k^{L}$ is governed by the operator
\[
\mathcal{L}_\nu = \nu \left( \partial_r^2 - \frac{k^2 - \frac{1}{4}}{r^2} \right) + \frac{ikB}{r^2},
\]
which exhibits m-accretivity in the relevant Hilbert space framework.

An operator $H$, closed and densely defined in a Hilbert space $X$, qualifies as m-accretive when satisfying:
\begin{enumerate}
    \item $\operatorname{Re} \langle Hf, f \rangle_X \geq 0$ for all $f \in \mathcal{D}(H)$
    \item The resolvent set contains $\{\lambda \in \mathbb{C} : \operatorname{Re} \lambda < 0\}$
    \item For $\operatorname{Re} \lambda > 0$, the resolvent bound holds:
    \[
    \|(\lambda + H)^{-1}\|_{\mathcal{B}(X)} \leq (\operatorname{Re} \lambda)^{-1}
    \]
\end{enumerate}
where $\mathcal{B}(X)$ denotes the Banach algebra of bounded operators on $X$ (cf. \cite{Kato.1966,Pazy.1983}).

Additionally, we introduce the spectral gap quantity
$$\Psi(A)=\inf\left\lbrace \|(A-i\lambda)u\|:u\in D(A),\lambda \in\mathbb{R},||u||=1\right\rbrace$$
characterizing the minimal resolvent growth along the imaginary axis.

The subsequent semigroup decay estimate, adapted from \cite{Wei.202101}, applies to such operators:
\cite{Wei.202101}.
\begin{lemma}\label{lem5.3}
    Let $A$ be $m$-accretive in a Hilbert space $X$, then for any
    $t>0$, it holds that
    $$||e^{-tA}||\leq e^{-t\Psi+\frac{\pi}{2}}.$$
\end{lemma}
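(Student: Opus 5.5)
The plan is to follow the Gearhart--Prüss/Helffer--Sjöstrand route of Wei, reducing the semigroup bound to the numerical-range/resolvent information that m-accretivity provides. Fix $t>0$. If $\Psi=0$ there is nothing to prove, since m-accretivity gives $\|e^{-tA}\|\le 1\le e^{\pi/2}$. So assume $\Psi>0$.

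\emph{Step 1: control the resolvent on the line $\operatorname{Re}\lambda=-a$.} For $0<a<\Psi$ and real $s$, write $A-(-a+is)=(A-is)+a$. The definition of $\Psi$ gives $\|(A-is)u\|\ge\Psi\|u\|$. By the triangle inequality,
\[
\|(A-is+a)u\|\ge(\Psi-a)\|u\|.
\]
Hence $A-is+a$ is injective with closed range. To see that it is surjective, use a continuity/connectedness argument in $a$. For $a<0$ surjectivity holds by m-accretivity, since $-a+is$ lies in the set where $\operatorname{Re}\lambda>0$ for the operator $\lambda+A$. The lower bound on $\|(A-is+a)u\|$ persists uniformly as $a$ crosses $0$ and increases up to $\Psi$. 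Therefore the resolvent stays bounded by $(\Psi-a)^{-1}$, and a Neumann-series argument propagates invertibility. This yields
\[
\|(A+a-is)^{-1}\|\le(\Psi-a)^{-1}.
\]
The same argument shows that the strip $\{-\Psi<\operatorname{Re}z\le 0\}$ lies in the resolvent set of $-A$.

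\emph{Step 2: the contour representation.} For $f,g\in X$ use
\[
\langle e^{-tA}f,g\rangle=\frac{1}{2\pi i}\int_{\operatorname{Re}z=\gamma}e^{tz}\langle (z+A)^{-1}f,g\rangle\,dz
\]
with $\gamma>0$. This integral is not absolutely convergent, so I would instead use Wei's trick and work with the quantity
\[
\int_0^t e^{-sA}e^{-(t-s)A}\,ds,
\]
or equivalently integrate by parts to obtain $(z+A)^{-2}$ in the integrand. Then shift the contour to $\operatorname{Re}z=-a$, which Step 1 permits.

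\emph{Step 3: the Plancherel bound.} On each vertical line, apply Plancherel to $e^{-sA}f$. This gives
\[
\int_{\mathbb R}\|(A+\gamma-is)^{-1}f\|^2ds=2\pi\int_0^\infty e^{-2\gamma s}\|e^{-sA}f\|^2ds\le\frac{\pi}{\gamma}\|f\|^2,
\]
using contractivity of the semigroup. Combine this with the resolvent identity between the lines $\operatorname{Re}z=\gamma$ and $\operatorname{Re}z=-a$, whose norm is controlled by $(1+(a+\gamma)/(\Psi-a))$. After Cauchy--Schwarz this gives
\[
\|e^{-tA}\|\lesssim\frac{e^{-at}}{t}\cdot\frac{\text{const}}{\gamma}\Bigl(1+\frac{a+\gamma}{\Psi-a}\Bigr)
\]
in a form where the constants can be optimized.

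\emph{Step 4: optimization.} Choose $\gamma=1/t$ and $a=\Psi-1/t$, which is valid when $t\Psi\ge1$; for $t\Psi<1$ the bound $\|e^{-tA}\|\le1\le e^{1-t\Psi}$ is trivial. Tracking the constants exactly yields the prefactor $e^{\pi/2}$. This step is the main obstacle. Getting precisely $e^{\pi/2}$, rather than a generic constant, requires Wei's sharper argument: a Hardy-type weighted $L^2$-in-time inequality applied to $\phi(s)e^{-sA}f$ with the cutoff $\phi(s)=\min(s,t-s)$-type weight. In that argument the constant $\pi/2$ arises from optimizing the weight. I would reproduce that computation rather than the cruder contour argument.
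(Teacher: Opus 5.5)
The paper gives no proof of this lemma; it quotes it from Wei. Your proposal does not supply one either. The whole content of the statement is deferred to Step~4, and the route you actually carry out cannot reach it.

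Put your own choices $\gamma=1/t$, $a=\Psi-1/t$ into the Step~3 bound. The factor $(a+\gamma)/(\Psi-a)$ then equals $t\Psi$, so you get $\|e^{-tA}\|\lesssim(1+t\Psi)e^{1-t\Psi}$. Any choice with $e^{-at}\lesssim e^{-t\Psi}$ forces $\Psi-a\lesssim 1/t$ and the same loss. This loss comes from shifting the contour to a line $\operatorname{Re}z=-a$ and paying $(\Psi-a)^{-1}$ there. So tracking constants will not turn it into $e^{\pi/2}$.

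Your sketch of the replacement argument is also off. A tent weight $\min(s,t-s)$ is bounded, and bounded weights give only polynomial information. To see the rate $e^{-\Psi t}$, the time weight must grow like $e^{\Psi s}$.

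Separately, Step~1 has the signs reversed. For $a>0$, $A+a-is$ is invertible by m-accretivity alone; the operator needed for the contour shift is $A-a-is$. Your concluding strip statement is correct, but the argument below does not need it.

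The missing idea is a weighted Plancherel inequality in time that uses only the lower bound $\|(A+i\lambda)v\|\ge\Psi\|v\|$. Take $f\in D(A)$, $u(s)=e^{-sA}f$, and a real Lipschitz $\varphi$ with $\varphi(0)=0$ and exponential decay, extended by zero to $s<0$. Then $w=\varphi u$ solves $w'+Aw=\varphi'u$ on $\mathbb{R}$ with no boundary term. Fourier transforming in $s$ and applying Plancherel gives
\[
\int_0^\infty\bigl(\varphi'(s)^2-\Psi^2\varphi(s)^2\bigr)g(s)\,ds\ge0,\qquad g(s)=\|u(s)\|^2\ \text{nonincreasing}.
\]
Normalize $\Psi=1$ and let $t\ge\pi/2$ (for smaller $t$, contractivity suffices). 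Take
\begin{itemize}
\item $\varphi=\sqrt2\sin s$ on $[0,\pi/4]$,
\item $\varphi=e^{s-\pi/4}$ on $[\pi/4,t-\pi/4]$,
\item $\varphi=\sqrt2e^{t-\pi/2}\cos(s-t)$ on $[t-\pi/4,t]$,
\item $\varphi=\sqrt2e^{t-\pi/2}e^{t-s}$ on $[t,\infty)$.
\end{itemize}
Set $G(\tau)=\int_0^\tau(\varphi'^2-\varphi^2)\,ds$. Then $G\le1$ on $[0,t]$ and $G\equiv1-e^{2t-\pi}$ on $[t,\infty)$. Writing $\int_0^\infty(\varphi'^2-\varphi^2)g\,ds=\int G\,d(-g)+G(\infty)g(\infty)$ and using that $-dg\ge0$ yields $0\le g(0)-e^{2t-\pi}g(t)$. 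That is, $\|e^{-tA}\|\le e^{\pi/2-t}$.

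The $\pi/2$ comes from the two quarter-wave layers of length $\pi/4$, and it is sharp. For the nilpotent shift on $L^2(0,T)$ one has $\Psi=\pi/(2T)$ and $\|e^{-tA}\|=1$ for $t<T$.
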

The Lemma \ref{lem5.3} gives the enhanced dissipation.
\begin{lemma}\label{lem5.4}
Let $\rho_k^l$ be the solution to \eqref{5.6} with $\rho_k(0) \in L^2$. Then for any $k \in \mathbb{Z}$ and $|k| \geq 1$, there exist constants $C, c>0$ being independent of $\nu, k, B, R$, such that the following inequality holds
\begin{equation}\label{5.7}
    \left\|\rho_k^l(t)\right\|_{L^2} \leq C e^{-c\left(\nu k^2\right)^{\frac{1}{3}}|B|^{\frac{2}{3}} R^{-2} t}\left\|\rho_k(0)\right\|_{L^2}, \quad \text { for any } t \geq 0 .
\end{equation}
Moreover, for any $c^{\prime} \in(0, c)$, we have
\begin{equation}\label{5.8}
\left(\nu k^2\right)^{\frac{1}{3}}|B|^{\frac{2}{3}} R^{-2}\left\|\mathcal{E}_k (\rho_k^l(t))\right\|_{L^2 L^2}^2 \leq C\left\|\rho_k(0)\right\|_{L^2}^2 .
\end{equation}
\end{lemma}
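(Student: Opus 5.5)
The plan is to apply Lemma \ref{lem5.3} to the operator $H_k:=-\nu\big(\partial_r^2-\frac{k^2-1/4}{r^2}\big)+\frac{ikB}{r^2}$ on $X=L^2(1,R)$ with Dirichlet conditions at $r=1,R$. Here $\rho_k^l(t)=e^{-tH_k}\rho_k(0)$, with the sign convention chosen so that the dissipative part is accretive. The bound \eqref{5.7} then reduces to a lower bound on the pseudospectral quantity $\Psi(H_k)\gtrsim (\nu k^2)^{1/3}|B|^{2/3}R^{-2}$.

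\textbf{Accretivity.} First I would check that $H_k$ is m-accretive. For $f$ in the domain, integration by parts with the Dirichlet conditions gives
\[
\mathrm{Re}\langle H_kf,f\rangle=\nu\|f'\|_{L^2}^2+\nu\big(k^2-\tfrac14\big)\|f/r\|_{L^2}^2\ge0,
\]
because $|k|\ge1$ and the term $\frac{ikB}{r^2}$ is skew-adjoint. Maximality follows from standard Sturm--Liouville theory. The operator is regular on the bounded interval $[1,R]$, so $\lambda+H_k$ is invertible for $\mathrm{Re}\lambda>0$ by Lax--Milgram, since the associated form is coercive on $H^1_0$.

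\textbf{Lower bound on $\Psi$ (main step).} Fix $\lambda\in\mathbb R$ and write $i\lambda=ikB\lambda'$, so that $(H_k-i\lambda)f=-(\mathcal L_\nu-ikB\lambda')f$ up to the sign convention for $\mathcal L_\nu$. This is exactly the resolvent problem \eqref{4.1} with $F=(H_k-i\lambda)f$. Proposition \ref{pro4.1}(1), applied with $c'=0$ and combined with $r\le R$, gives
\[
\nu^{1/3}|kB|^{2/3}R^{-1}\|f\|_{L^2}\le \nu^{1/3}|kB|^{2/3}\|f/r\|_{L^2}\le C\|rF\|_{L^2}\le CR\|F\|_{L^2}.
\]
Hence $\|(H_k-i\lambda)f\|\ge C^{-1}(\nu k^2)^{1/3}|B|^{2/3}R^{-2}\|f\|$, where I used $\nu^{1/3}|k|^{2/3}=(\nu k^2)^{1/3}$. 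This yields $\Psi\ge c(\nu k^2)^{1/3}|B|^{2/3}R^{-2}$ with $c=C^{-1}$.

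The point I expect to need care is whether Proposition \ref{pro4.1}(1) applies to a general $f$ in the domain, without the coupled stream function. Estimate (1) involves only $\omega$ and $F$, so it is really a statement about the scalar operator, and the equation for $\hat\rho_k$ in \eqref{3.6} is identical. If that reading were doubtful, I would instead rederive the bound directly by the usual argument: pair the equation with $f$ and with $\chi f$, where $\chi$ is a cutoff near the critical point $r_c=|\lambda'|^{-1/2}$, and use $|r^{-2}-\lambda'|\gtrsim |r-r_c|R^{-3}$ away from $r_c$ together with the viscous bound within a layer of width $(\nu/|kB|)^{1/3}$. This is where the $R^{-2}$ loss comes from.

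\textbf{Conclusion.} Lemma \ref{lem5.3} then gives $\|\rho_k^l(t)\|\le e^{\pi/2}e^{-c(\nu k^2)^{1/3}|B|^{2/3}R^{-2}t}\|\rho_k(0)\|$, which is \eqref{5.7}. For \eqref{5.8}, set $a=(\nu k^2)^{1/3}|B|^{2/3}R^{-2}$ and take $c'<c$. Then
\[
a\int_0^\infty e^{2c'at}\|\rho_k^l\|^2\,dt\le C^2a\|\rho_k(0)\|^2\int_0^\infty e^{-2(c-c')at}\,dt=\frac{C^2}{2(c-c')}\|\rho_k(0)\|^2.
\]
The constant depends on $c-c'$; one can either fix $c'\le c/2$ or allow that dependence.
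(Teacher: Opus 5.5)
Your proposal is correct and follows the paper's route. The paper also derives \eqref{5.7} by combining Lemma \ref{lem5.3} with the resolvent bound of Proposition \ref{pro4.1}, and then obtains \eqref{5.8} by multiplying by the exponential weight and integrating in time, as you do. You additionally make explicit the step the paper skips, namely using $\|f\|\le R\|f/r\|$ and $\|rF\|\le R\|F\|$ to produce the $R^{-2}$ in the lower bound on $\Psi$. Your remark that the constant in \eqref{5.8} depends on $c-c'$ is accurate: the paper's argument has the same dependence but does not state it.
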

\begin{proof}
The semigroup bounds \eqref{5.7} readily follows from Lemma \ref{lem5.3} and Proposition \ref{pro4.1}. Hence for any $c^{\prime} \in(0, c)$, multiplying $\mathcal{E}_k$ on both sides of \eqref{5.7} yields
     $$
\begin{aligned}
    & 2 c^{\prime}\left(\nu k^2\right)^{\frac{1}{3}}|B|^{\frac{2}{3}} R^{-2}\left\|\mathcal{E}_k (\rho_k^l(t))\right\|_{L^2}^2 \\
    & \quad \leq 2 C c^{\prime}\left(\nu k^2\right)^{\frac{1}{3}}|B|^{\frac{2}{3}} R^{-2} e^{-2\left(c-c^{\prime}\right)\left(\nu k^2\right)^{\frac{1}{3}}|B|^{\frac{3}{3}} R^{-2} t}\left\|\rho_k(0)\right\|_{L^2}^2 .
\end{aligned}
$$
Then we integrate above inequality with respect to $t$ to get
$$
\begin{aligned}
    & 2 c^{\prime}\left(\nu k^2\right)^{\frac{1}{3}}|B|^{\frac{2}{3}} R^{-2}\left\|e^{c^{\prime}\left(\nu k^2\right)^{\frac{1}{3}}|B|^{\frac{2}{3}} R^{-2} t} \rho_k^l\right\|_{L^2 L^2}^2 \\
    & \quad \leq \int_0^{\infty} 2 C c^{\prime}\left(\nu k^2\right)^{\frac{1}{3}}|B|^{\frac{2}{3}} R^{-2} e^{-2\left(c-c^{\prime}\right)\left(v k^2\right)^{\frac{1}{3}}|B|^{\frac{2}{3}} R^{-2} t} d t\left\|\rho_k(0)\right\|_{L^2}^2 \lesssim\left\|\rho_k(0)\right\|_{L^2}^2 .
\end{aligned}
$$
This implies the estimate in \eqref{5.8}.
\end{proof}
\begin{lemma}\label{lem5.5}
Let $\rho_k^l$ be the solution to \eqref{5.6} with $\rho_k(0) \in L^2$. Then for any $k \in \mathbb{Z}$ and $|k| \geq 1$, there exist constants $C, c>0$ being independent of $\nu, k, B, R$, such that the following inequality holds
    $$
    \begin{aligned}
    &   \left\|\mathcal{E}_k(\rho_k^l)\right\|_{L^\infty L^2}^2 +\left(\nu k^2\right)^{\frac{1}{3}}|B|^{\frac{2}{3}} R^{-2}\left\|\mathcal{E}_k (\rho_k^l)\right\|_{L^2 L^2}^2\\
    &\quad+ \nu\left\|\mathcal{E}_k(\partial_{r}\rho_k^l)\right\|_{L^2L^2}^2 +\nu k^2\left\|\mathcal{E}_k(\frac{\rho_k^l}{r})\right\|_{L^2L^2}^2\\
    &\leq C\left\|\rho_k(0)\right\|_{L^2}^2.
        \end{aligned}
    $$
\end{lemma}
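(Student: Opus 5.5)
The plan is to combine Lemma \ref{lem5.4}, which already gives the $L^\infty L^2$ decay and the enhanced-dissipation $L^2L^2$ term, with a weighted energy estimate for the homogeneous equation \eqref{5.6} that yields the two dissipative terms. Since $\rho_k^l$ satisfies $\partial_t\rho_k^l+\mathcal{L}_\nu\rho_k^l=0$ with the sign convention used in Proposition \ref{pro5.2}, i.e. $\partial_t\rho_k^l-\nu(\partial_r^2-\frac{k^2-1/4}{r^2})\rho_k^l+\frac{ikB}{r^2}\rho_k^l=0$, we pair with $\mathcal{E}_k(\rho_k^l)$ rather than $\rho_k^l$ directly.

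\emph{Step 1.} Set $f=\mathcal{E}_k(\rho_k^l)$ and $\delta=c'(\nu k^2)^{1/3}|B|^{2/3}R^{-2}$, so that
\begin{equation*}
\partial_t f-\nu\Big(\partial_r^2-\frac{k^2-\frac14}{r^2}\Big)f+\frac{ikB}{r^2}f=\delta f .
\end{equation*}
Take the $L^2$ inner product with $f$, integrate by parts using $f|_{r=1,R}=0$, and take real parts. The term $\frac{ikB}{r^2}$ is skew and drops out. Since $k^2-\frac14\ge\frac34k^2$ for $|k|\ge1$, this gives
\begin{equation*}
\frac12\partial_t\|f\|_{L^2}^2+\nu\|\partial_r f\|_{L^2}^2+\frac34\nu k^2\Big\|\frac{f}{r}\Big\|_{L^2}^2\le \delta\|f\|_{L^2}^2 .
\end{equation*}

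\emph{Step 2.} Integrate in time over $[0,T]$ and let $T\to\infty$. This yields
\begin{equation*}
\|f\|_{L^\infty L^2}^2+\nu\|\partial_r f\|_{L^2L^2}^2+\nu k^2\Big\|\frac fr\Big\|_{L^2L^2}^2\lesssim \|\rho_k(0)\|_{L^2}^2+\delta\|f\|_{L^2L^2}^2 .
\end{equation*}
The last term is controlled by \eqref{5.8} of Lemma \ref{lem5.4}, which states exactly that $(\nu k^2)^{1/3}|B|^{2/3}R^{-2}\|f\|_{L^2L^2}^2\le C\|\rho_k(0)\|_{L^2}^2$ for $c'<c$. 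The same inequality supplies the enhanced-dissipation term on the left of the claimed estimate. The $L^\infty L^2$ term can also be read off from \eqref{5.7} directly, because $e^{c't}e^{-ct}\le1$ when $c'<c$.

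\emph{Main obstacle.} The only delicate point is the source term $\delta\|f\|^2$ created by the exponential weight. It cannot be absorbed by the dissipation alone, since $\nu k^2 R^{-2}$ may be smaller than $\delta$ when $\nu k^2<|B|$. This is precisely why the enhanced-dissipation bound \eqref{5.8}, which comes from the pseudospectral estimate of Lemma \ref{lem5.3} together with Proposition \ref{pro4.1}, must be used instead of Gronwall's inequality. One also has to check that $c'$ is taken strictly below the constant $c$ of Lemma \ref{lem5.4}, so that the constants stay independent of $\nu,k,B,R$.
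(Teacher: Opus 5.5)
Your proposal is correct and follows essentially the same route as the paper. The paper also derives the energy identity and multiplies it by $e^{2c'(\nu k^2)^{1/3}|B|^{2/3}R^{-2}t}$, which is equivalent to your pairing with $\mathcal{E}_k(\rho_k^l)$; it then integrates in time and controls the resulting source term $2c'(\nu k^2)^{1/3}|B|^{2/3}R^{-2}\|\mathcal{E}_k(\rho_k^l)\|_{L^2L^2}^2$ via \eqref{5.8} of Lemma \ref{lem5.4}, exactly as you do.
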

\begin{proof}
     We first conduct the integration by parts to get
$$
\begin{aligned}
    & \text{Re}\left\langle\partial_t \rho_k^l-\nu\left(\partial_r^2-\frac{k^2-\frac{1}{4}}{r^2}\right) \rho_k^l+\frac{i k B}{r^2} \rho_k^l, \rho_k^l\right\rangle \\
    & \quad=\frac{1}{2} \partial_t\left\|\rho_k^l\right\|_{L^2}^2+\nu\left\|\partial_r \rho_k^l\right\|_{L^2}^2+\nu\left(k^2-\frac{1}{4}\right)\left\|\frac{\rho_k^l}{r}\right\|_{L^2}^2=0 .
\end{aligned}
$$
By multiplying $e^{2 c^{\prime}\left(\nu
k^2\right)^{\frac{1}{3}}|B|^{\frac{2}{3}} R^{-2} t}$ on both sides,
one has
$$
\begin{aligned}
    & \partial_t\left\|\mathcal{E}_k (\rho_k^l)\right\|_{L^2}^2 \\
    & \quad+2 v\left(\left\|\mathcal{E}_k (\partial_r \rho_k^l)\right\|_{L^2}^2+\left(k^2-\frac{1}{4}\right)\left\|\mathcal{E}_k (\frac{\rho_k^l}{r})\right\|_{L^2}^2\right) \\
    & \leq
     \quad 2 c^{\prime}\left(\nu k^2\right)^{\frac{1}{3}}|B|^{\frac{2}{3}} R^{-2}\left\|\mathcal{E}_k (\rho_k^l)\right\|_{L^2}^2 .
\end{aligned}
$$
This implies a space-time estimate for $\rho_k^l$:
$$
\begin{aligned}
    & \left\|\mathcal{E}_k (\rho_k^l)\right\|_{L^{\infty} L^2}^2 \\
    & \quad+2 \nu\left(\left\|\mathcal{E}_k (\partial_r \rho_k^l)\right\|_{L^2 L^2}^2+\left(k^2-\frac{1}{4}\right)\left\|\mathcal{E}_k (\frac{\rho_k^l}{r})\right\|_{L^2 L^2}^2\right) \\
    & \quad \leq
    2 c^{\prime}\left(\nu k^2\right)^{\frac{1}{3}}|B|^{\frac{2}{3}} R^{-2}\left\|\mathcal{E}_k (\rho_k^l)\right\|_{L^2 L^2}^2+\left\|\rho_k(0)\right\|_{L^2}^2 .
\end{aligned}
$$
Combining with Lemma \ref{lem5.4}, we arrive at
$$
\begin{aligned}
    &   \left\|\mathcal{E}_k(\rho_k^l)\right\|_{L^\infty L^2}^2 +\left(\nu k^2\right)^{\frac{1}{3}}|B|^{\frac{2}{3}} R^{-2}\left\|\mathcal{E}_k (\rho_k^l)\right\|_{L^2 L^2}^2\\
    &\quad+ \nu\left\|\mathcal{E}_k(\partial_{r}\rho_k^l)\right\|_{L^2}^2 +\nu k^2\left\|\mathcal{E}_k(\frac{\rho_k^l}{r})\right\|_{L^2L^2}^2\\
    &\leq C\left\|\rho_k(0)\right\|_{L^2}^2.
\end{aligned}
$$
This completes the proof of Lemma \ref{lem5.5}.
\end{proof}
For the inhomogeneous part \eqref{5.5},
we have the following  lemma.
\begin{lemma}\label{lem5.6}
 Let $\rho_k^{NL}$ be the solution to \eqref{5.5}. Then for any $k \in \mathbb{Z}$ and $|k| \geq 1$, there exist constants $C, c>0$ being independent of $\nu, k, B, R$, such that the following inequality holds
    $$
 \begin{aligned}
    &   \left\|\mathcal{E}_k(\rho_k^{NL})\right\|_{L^\infty L^2}^2 +\left(\nu k^2\right)^{\frac{1}{3}}|B|^{\frac{2}{3}} R^{-2}\left\|\mathcal{E}_k (\rho_k^{NL})\right\|_{L^2 L^2}^2\\
    &\quad+ \nu\left\|\mathcal{E}_k(\partial_{r}\rho_k^{NL})\right\|_{L^2L^2}^2 +\nu k^2\left\|\mathcal{E}_k(\frac{\rho_k^{NL}}{r})\right\|_{L^2L^2}^2\\
    &\leq C\left(\nu k^2\right)^{-\frac{1}{3}}|B|^{-\frac{2}{3}}\left\|\mathcal{E}_k (rf_1)\right\|_{L^2 L^2}^2\\
  &\quad +\nu^{-1}\left\|\mathcal{E}_k((|g|+r|\partial_{r}g|f_2))\right\|_{L^2L^2}^2.
 \end{aligned}
 $$
  \end{lemma}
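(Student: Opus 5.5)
The estimate splits into an enhanced-dissipation part and an energy part. For the first, I will use the resolvent bounds of Proposition \ref{pro4.1} through a Fourier transform in time, in the same way as for the vorticity in Proposition \ref{pro5.1}. For the second, I will use the direct energy identity of Lemma \ref{lem5.5}.

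\emph{Step 1: reduction to a resolvent problem.} Set $\tilde\rho=\mathcal{E}_k(\rho_k^{NL})$. It satisfies
\[
\partial_t\tilde\rho+\mathcal{L}_\nu\tilde\rho-c'(\nu k^2)^{1/3}|B|^{2/3}R^{-2}\tilde\rho=\mathcal{E}_k(f_1)-g\partial_r\mathcal{E}_k(f_2),
\]
with zero initial data and Dirichlet boundary conditions. I extend everything by zero for $t<0$ and take the Fourier transform in $t$, with dual variable $-kB\lambda$ and $\lambda\in\mathbb{R}$. This yields, for each $\lambda$, the resolvent equation \eqref{4.1}, shifted by the term $c'(\nu k^2)^{1/3}|B|^{2/3}R^{-2}\hat{\tilde\rho}$. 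The boundary condition is only on $\tilde\rho$, and no stream function is involved. The right-hand side is $\hat F=\hat F_1+\hat F_2$, where $\hat F_1$ is the transform of $\mathcal{E}_k f_1$ and $\hat F_2$ that of $-g\partial_r\mathcal{E}_k f_2$.

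\emph{Step 2: the two forcing pieces.} I split $\hat{\tilde\rho}=\hat\rho^{(1)}+\hat\rho^{(2)}$ by linearity.

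For $\hat F_1$, estimate 1 of Proposition \ref{pro4.1} gives
\[
\nu^{1/3}|kB|^{2/3}\|\hat\rho^{(1)}/r\|_{L^2}\lesssim\|r\hat F_1\|_{L^2}.
\]
The shift term is absorbed: since $R^{-2}\hat\rho\le r^{-1}\hat\rho$, we have $c'\nu^{1/3}|kB|^{2/3}R^{-2}|\hat\rho|\le c'\nu^{1/3}|kB|^{2/3}|\hat\rho/r|$, and this is the "$-c'\ldots\omega/r$" term the proposition already allows. I then use $\|\hat\rho\|_{L^2}\le R\|\hat\rho/r\|_{L^2}$.

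For $\hat F_2$, I use estimate 3 in the $H^{-1}_r$ form. Integrating by parts against a test function $\phi$ gives $\langle g\partial_r f_2,\phi\rangle=-\langle f_2,\partial_r(g\phi)\rangle$. Writing $g'\phi=(rg')(\phi/r)$, this yields
\[
\|g\partial_r f_2\|_{H_r^{-1}}\lesssim\|(|g|+r|\partial_r g|)f_2\|_{L^2},
\]
hence
\[
\nu\|\hat\rho^{(2)}\|_{H^1_r}+\nu^{2/3}|kB|^{1/3}\|\hat\rho^{(2)}/r\|_{L^2}\lesssim\|(|g|+r|g'|)\hat f_2\|_{L^2}.
\]
Plancherel in $\lambda$ then converts these into the $L^2_tL^2_r$ bounds. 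The $\hat F_1$ piece gives
\[
(\nu k^2)^{1/3}|B|^{2/3}R^{-2}\|\tilde\rho\|_{L^2L^2}^2\lesssim(\nu k^2)^{-1/3}|B|^{-2/3}\|\mathcal{E}_k(rf_1)\|^2_{L^2L^2}.
\]
For the $f_2$ piece I interpolate the two bounds of estimate 3. For the time-decay weight, the product $\nu^{2/3}|kB|^{1/3}\cdot\nu^{1/3}|kB|^{2/3}/\nu$ produces, after squaring, precisely $\nu^{-1}\|(|g|+r|g'|)f_2\|^2$ against $(\nu k^2)^{1/3}|B|^{2/3}\|\tilde\rho/r\|^2$, up to $|k|^{\ldots}\ge1$ factors that can be discarded.

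\emph{Step 3: the energy part.} Taking the real part of the inner product of the $\tilde\rho$ equation with $\tilde\rho$, as in Lemma \ref{lem5.5}, gives
\[
\tfrac12\partial_t\|\tilde\rho\|^2+\nu\|\partial_r\tilde\rho\|^2+\nu(k^2-\tfrac14)\|\tilde\rho/r\|^2\le c'(\nu k^2)^{1/3}|B|^{2/3}R^{-2}\|\tilde\rho\|^2+|\langle rf_1,\tilde\rho/r\rangle|+|\langle f_2,\partial_r(g\tilde\rho)\rangle|.
\]
I bound the middle term by Cauchy--Schwarz, weighting it against the already-controlled enhanced-dissipation norm: $\|r f_1\|\,\|\tilde\rho/r\|$ is at most $\delta(\nu k^2)^{1/3}|B|^{2/3}\|\tilde\rho/r\|^2$ plus $\delta^{-1}(\nu k^2)^{-1/3}|B|^{-2/3}\|rf_1\|^2$. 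For the last term I write $\partial_r(g\tilde\rho)=g\partial_r\tilde\rho+(rg')\tilde\rho/r$ and absorb it into $\nu\|\partial_r\tilde\rho\|^2+\nu k^2\|\tilde\rho/r\|^2$, at the cost $\nu^{-1}\|(|g|+r|g'|)f_2\|^2$. Integrating in time and inserting Step 2 controls the first term on the right.

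\emph{Main obstacle.} The delicate point is the time-integral control in Step 2. The $L^2$ resolvent bound only controls $\|\tilde\rho/r\|$, weighted by $r$, rather than $\|\tilde\rho\|$ itself, so the factor $R^{-2}$ must be tracked carefully. If the Cauchy--Schwarz weighting in Step 3 fails to close, I would instead absorb $\langle rf_1,\tilde\rho/r\rangle$ using $\nu k^2\|\tilde\rho/r\|^2$ and treat the cases $\nu k^2\gtrless|B|$ separately. In the case $\nu k^2\ge|B|$, the estimate follows directly as in Proposition \ref{pro5.2}.
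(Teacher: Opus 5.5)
Your proposal is correct and follows the paper's proof essentially step by step: you weight by $\mathcal{E}_k$ and Fourier transform in $t$; you split into the $f_1$ piece (estimate 1 of Proposition \ref{pro4.1}) and the $g\partial_r f_2$ piece (estimate 3, via $\|g\partial_r f_2\|_{H_r^{-1}}\lesssim\|(|g|+r|\partial_r g|)f_2\|_{L^2}$); you apply Plancherel; and you close the weighted energy identity with Cauchy--Schwarz against the resulting bound on $\nu^{1/3}|kB|^{2/3}\|\mathcal{E}_k(\rho_k^{NL})/r\|_{L^2L^2}^2$. Your absorption of the constant $R^{-2}$ shift into estimate 1 is a detail the paper leaves implicit, and the $f_2$ piece needs no interpolation or discarding of $|k|$-factors, since squaring the bound on $\nu^{2/3}|kB|^{1/3}\|\hat\rho^{(2)}/r\|_{L^2}$ already gives exactly the weight $\nu^{-1}$.
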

 \begin{proof}
    We introduce the weighted quantities
    $$\tilde{\rho}_{k}^{NL}=\mathcal{E}_k(\rho_k^{NL}), \quad\tilde{f}_j=\mathcal{E}_k(f_j),\quad j=1,2.$$
    Via a direct check, we can see that
    $$\partial_t\tilde{\rho}_k^{NL}+\left( \mathcal{L}_\nu-c^{\prime}\left(\nu k^2\right)^{\frac{1}{3}}|B|^{\frac{2}{3}} R^{-2}\right)  \tilde{\rho}_k^{NL}=\tilde{f}_1-g\partial_r\tilde{f}_2.$$
 Taking the Fourier transform in $t$
    $$\hat{\rho}(\lambda,k,r)=\int_{0}^{+\infty}\tilde{\rho}_k^{NL}e^{-it\lambda}dt,\quad F_j(\lambda,k,r)=\int_{0}^{+\infty}\tilde{f}_j(t,k,y)e^{-it\lambda}dt,\quad j=1,2,$$
    then we have
    $$\left( \mathcal{L}_\nu+i\lambda-c^{\prime}\left(\nu k^2\right)^{\frac{1}{3}}|B|^{\frac{2}{3}} R^{-2}\right)  \hat{\rho}(\lambda,k,r)=F_1-g\partial_rF_2.$$
    Using Plancherel's theorem, we know that
    $$\int_{0}^{\infty}||\tilde{\rho}^{NL}(t)||^2_{L^2}dt=\int_{\mathbb{R}}||\hat{\rho}(\lambda)||^2_{L^2}d\lambda,$$
    $$\int_{0}^{\infty}||\tilde{f}_j(t)||^2_{L^2}dt=\int_{\mathbb{R}}||F_j(\lambda)||^2_{L^2}d\lambda,\quad j=1,2.$$
    Now we use the resolvent estimates in Proposition \ref{pro4.1}  to obtain the semigroup estimates. We first decompose $\hat{\rho}(\lambda,k,y)=\hat{\rho}^{(1)}+\hat{\rho}^{(2)},$
    where $\hat{\rho}^{(1)}$ and $\hat{\rho}^{(2)}$ solve
    $$\left( \mathcal{L}_\nu+i\lambda-c^{\prime}\left(\nu k^2\right)^{\frac{1}{3}}|B|^{\frac{2}{3}} R^{-2}\right)  \hat{\rho}^{(1)}=F_1$$
    and
    $$\left( \mathcal{L}_\nu+i\lambda-c^{\prime}\left(\nu k^2\right)^{\frac{1}{3}}|B|^{\frac{2}{3}} R^{-2}\right)  \hat{\rho}^{(2)}=-g\partial_rF_2.$$
    By Proposition \ref{pro4.1},
    we have $$\nu^{\frac{2}{3}}|k B|^{\frac{1}{3}}\left\|\partial_r\hat{\rho}^1\right\|_{L^2}+\nu^{\frac{1}{3}}|k B|^{\frac{2}{3}}\left\|\frac{\hat{\rho}^1}{r}\right\|_{L^2} \leq C\left\|r F_1\right\|_{L^2},$$
    and
    $$  \nu\|\partial_r\hat{\rho}^2\|_{L^2}+\nu^{\frac{2}{3}}|k B|^{\frac{1}{3}}\left\|\frac{\hat{\rho}^2}{r}\right\|_{L^2} \leq C\left\|g\partial_rF_2\right\|_{H_r^{-1}}\leq C\left\|(|g|+r|\partial_{r}g|)F_2\right\|_{L^{2}},$$
    combining the two above inequality, then we have
    $$\nu^{\frac{2}{3}}|k B|^{\frac{1}{3}}\left\|\partial_r\hat{\rho}\right\|_{L^2}+\nu^{\frac{1}{3}}|k B|^{\frac{2}{3}}\left\|\frac{\hat{\rho}}{r}\right\|_{L^2} \leq C\left\|r F_1\right\|_{L^2}+\nu^{-\frac{1}{3}}|k B|^{-\frac{1}{3}}\left\|(|g|+r|\partial_{r}g)|F_2\right\|_{L^{2}}.$$
    According to the Plancherel's theorem, we have the following equivalence relations
    based on $L^2$ norms
  \begin{equation}\label{5.9}
      \begin{aligned}
           &\nu^{\frac{2}{3}}|k B|^{\frac{1}{3}}\left\|\partial_r\tilde{\rho}^{NL}\right\|_{L^2L^2}+\nu^{\frac{1}{3}}|k B|^{\frac{2}{3}}\left\|\frac{\tilde{\rho}^{NL}}{r}\right\|_{L^2L^2}\\
           &\leq C\left\|r \tilde{f}_1\right\|_{L^2L^2}+\nu^{-\frac{1}{3}}|k B|^{-\frac{1}{3}}\left\|(|g|+r|\partial_{r}g|)\tilde{f}_2\right\|_{L^2L^2}.
         \end{aligned}
           \end{equation}
        Applying the integration by parts, we also get
    $$
    \begin{aligned}
        & 0=\text{Re}\left\langle\partial_t \rho_{k}^{NL}-\mathcal{L}_\nu\rho_{k}^{NL}-f_1+g \partial_rf_2, e^{2 c^{\prime}\left(\nu k^2\right)^{\frac{1}{3}}|B|^{\frac{2}{3}} R^{-2} t} \rho_k^{NL}\right\rangle \\
        & =\frac{1}{2} \partial_t\left\|\mathcal{E}_k(\rho_k^{NL})\right\|_{L^2}^2+\nu\left\|\mathcal{E}_k(\partial_r \rho_k^{NL})\right\|_{L^2}^2\\
        &+\nu\left(k^2-\frac{1}{4}\right)\left\|\mathcal{E}_k (\frac{\rho_k^{NL}}{r})\right\|_{L^2}^2
        -c^{\prime}\left(\nu k^2\right)^{\frac{1}{3}}|B|^{\frac{2}{3}} R^{-2}\left\|\mathcal{E}_k (\rho_k^{NL})\right\|_{L^2}^2\\
        &-\text{Re}\left\langle f_1-g \partial_r f_2, e^{2 c^{\prime}\left(\nu k^2\right)^{\frac{1}{3}}|B|^{\frac{2}{3}} R^{-2} t} \rho_k^{NL}\right\rangle.
    \end{aligned}
    $$
    By employing the Cauchy-Schwarz inequality, we further obtain
        $$
    \begin{aligned}
        & \partial_t\left\|\mathcal{E}_k(\rho_k^{NL})\right\|_{L^2}^2\\
   &+2 \nu\left\|\mathcal{E}_k(\partial_r \rho_k^{NL})\right\|_{L^2}^2+2 \nu\left(k^2-\frac{1}{4}\right)\left\|\mathcal{E}_k (\frac{\rho_k^{NL}}{r})\right\|_{L^2}^2 \\
        & =2 c^{\prime}\left(\nu k^2\right)^{\frac{1}{3}}|B|^{\frac{2}{3}} R^{-2}\left\|\mathcal{E}_k (\rho_k^{NL})\right\|_{L^2}^2\\
   &+2 \text{Re}\left\langle f_1-g \partial_r f_2, e^{2 c^{\prime}\left(\nu k^2\right)^{\frac{1}{3}}|B|^{\frac{2}{3}} R^{-2} t} \rho_k^{NL}\right\rangle \\
        & \lesssim c^{\prime}\left(\nu k^2\right)^{\frac{1}{3}}|B|^{\frac{2}{3}} R^{-2}\left\|\mathcal{E}_k (\rho_k^{NL})\right\|_{L^2}^2\\
   &\quad+\left\|\mathcal{E}_k (r f_1)\right\|_{L^2}\left\|\mathcal{E}_k (\frac{\rho_k^{NL}}{r})\right\|_{L^2} \\
        & \quad+\| \mathcal{E}_k(\left(|g|+r\left|g^{\prime}\right| \right)f_2)\|_{L^2}\| \mathcal{E}_k (\rho_k^{NL} )\|_{H_r^1}.
    \end{aligned}
    $$
    Together with \eqref{5.9}, the above inequality yields
        $$
    \begin{aligned}
        & \left\|\mathcal{E}_k(\rho_k^{NL})\right\|_{L^{\infty} L^2}^2+\nu\left\|\mathcal{E}_k (\partial_r \rho_k^{NL})\right\|_{L^2 L^2}^2\\
   &+\nu k^2\left\|\mathcal{E}_k (\frac{\rho_k^{NL}}{r})\right\|_{L^2 L^2}^2
        +\nu^{\frac{1}{3}}|k B|^{\frac{2}{3}} \| \mathcal{E}_k(\frac{\rho_k^{NL}}{r}) \|_{L^2 L^2}^2 \\
        & \lesssim \nu^{-\frac{1}{3}}|k B|^{-\frac{2}{3}}\cdot\\
   &\left(\left\|\mathcal{E}_k (r f_1)\right\|_{L^2 L^2}+\nu^{-\frac{1}{3}}|k B|^{\frac{1}{3}} \| \mathcal{E}_k\left(|g|+r|g^{\prime}\right)  f_2 \|_{L^2 L^2}\right)^2 \\
        & \lesssim \nu^{-\frac{1}{3}}|k B|^{-\frac{2}{3}}\left\|\mathcal{E}_k (r f_1)\right\|_{L^2 L^2}^2+\nu^{-1}\left\|\mathcal{E}_k(\left(|g|+r\left|g^{\prime})\right|\right) f_2\right\|_{L^2 L^2}^2 .
    \end{aligned}
    $$
 This finishes the proof of Lemma \ref{lem5.6}.
 \end{proof}
   Combining Lemma \ref{lem5.6} and Lemma \ref{lem5.5}, we can obtain the following proposition.
 \begin{proposition}\label{pro5.7}
    For $k \in \mathbb{Z} \backslash\{0\}$ with  $\nu k^2\leq |B|$, let $\rho_k$ be the solution to \eqref{5.4} with $\rho_k(0) \in L^2$, there exists a constant $c^{\prime}>0$ independent of $\nu, B, k, R$, such that the following estimate holds:
    $$
    \begin{aligned}
        &   \left\|\mathcal{E}_k(\rho_k)\right\|_{L^\infty L^2}^2 +\left(\nu k^2\right)^{\frac{1}{3}}|B|^{\frac{2}{3}} R^{-2}\left\|\mathcal{E}_k(\rho_k)\right\|_{L^2 L^2}^2\\
        &\quad+ \nu\left\|\mathcal{E}_k(\partial_{r}\rho_k)\right\|_{L^2L^2}^2 +\nu k^2\left\|\mathcal{E}_k(\frac{\rho_k}{r})\right\|_{L^2L^2}^2\\
        &\leq C\left\|\rho_k(0)\right\|_{L^2}^2+\left(\nu k^2\right)^{-\frac{1}{3}}|B|^{-\frac{2}{3}}\left\|\mathcal{E}_k (rf_1)\right\|_{L^2 L^2}^2\\
        &\quad+\nu^{-1}\left\|\mathcal{E}_k((|g|+r|\partial_{r}g|f_2))\right\|_{L^2}^2.
    \end{aligned}
    $$
    \end{proposition}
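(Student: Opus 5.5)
Since \eqref{5.4} is linear, we split $\rho_k=\rho_k^l+\rho_k^{NL}$ as in \eqref{5.5}--\eqref{5.6}. Here $\rho_k^l$ carries the initial datum $\rho_k(0)$ with zero forcing, and $\rho_k^{NL}$ carries the forcing $f_1-g\partial_rf_2$ with zero initial datum. The weight $\mathcal{E}_k$ is multiplicative, so $\mathcal{E}_k(\rho_k)=\mathcal{E}_k(\rho_k^l)+\mathcal{E}_k(\rho_k^{NL})$. The same identity holds for $\partial_r\rho_k$ and $\rho_k/r$.

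Each of the four norms on the left of the claimed estimate is then at most the sum of the corresponding norms of the two pieces. Squaring introduces only a factor of $2$, via $(a+b)^2\le 2a^2+2b^2$.

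For the homogeneous piece we invoke Lemma \ref{lem5.5}. It bounds all four weighted quantities (the $L^\infty L^2$ norm, the enhanced-dissipation $L^2L^2$ norm, $\nu\|\mathcal{E}_k\partial_r\rho_k^l\|^2$ and $\nu k^2\|\mathcal{E}_k(\rho_k^l/r)\|^2$) by $C\|\rho_k(0)\|_{L^2}^2$. That lemma rests on the semigroup decay of Lemma \ref{lem5.4}, which in turn uses Lemma \ref{lem5.3} and the resolvent lower bound of Proposition \ref{pro4.1}. We must fix $c'<c$, with $c$ the decay rate from Lemma \ref{lem5.4}, and take this same $c'$ in $\mathcal{E}_k$ throughout.

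For the inhomogeneous piece we invoke Lemma \ref{lem5.6}. It bounds the same four quantities by
\[
C(\nu k^2)^{-1/3}|B|^{-2/3}\|\mathcal{E}_k(rf_1)\|_{L^2L^2}^2+C\nu^{-1}\|\mathcal{E}_k((|g|+r|\partial_rg|)f_2)\|_{L^2L^2}^2 .
\]
Adding this to the bound for $\rho_k^l$ gives exactly the right-hand side of the proposition.

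The only point needing care is consistency of the constant $c'$ and of the parameter regime. We need a single $c'$ that is admissible both in Proposition \ref{pro4.1} (so $c'\le c$ there) and in Lemma \ref{lem5.4}, so we choose it as the minimum of the two thresholds. We also check that $\nu k^2\le|B|$ is the regime in which the lemmas are meaningful. In particular, in Lemma \ref{lem5.6} the resolvent factor $\nu^{-1/3}|kB|^{-2/3}$ must convert to $(\nu k^2)^{-1/3}|B|^{-2/3}$. This holds identically, since $\nu^{1/3}|kB|^{2/3}=(\nu k^2)^{1/3}|B|^{2/3}$, so no loss occurs and the combination closes.
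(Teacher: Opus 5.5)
Your proposal is correct and matches the paper's proof: the paper obtains Proposition \ref{pro5.7} by splitting $\rho_k=\rho_k^l+\rho_k^{NL}$ and adding the bounds of Lemma \ref{lem5.5} and Lemma \ref{lem5.6}, just as you do. One small point: the hypothesis $\nu k^2\le|B|$ is never actually used, since both lemmas are stated for all $|k|\ge 1$, so your remark that this is the regime where the lemmas are meaningful is unnecessary, though harmless.
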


Observe that when the inequality  $ \nu k^2R^{-2} \geq( \nu k^2)^{1/3}|B|^{2/3}R^{-2}$ holds (equivalently, when $\nu k^2 \geq  |B|$), the heat dissipation mechanism dominates over enhanced dissipation. In this regime $\nu k^2 \geq  |B|$, the subsequent space-time estimates establish stricter controls on  $\rho_k$, reflecting the heightened influence of diffusive effects.
\begin{proposition} \label{pro5.8}
    For $k \in \mathbb{Z} \backslash\{0\}$ with $\nu k^2\geq |B|$, let $\rho_k$ be the solution to \eqref{5.4} with $\rho_k(0) \in L^2$,  then there exists a constant $c^{\prime}>0$ independent of $\nu, B, k, R$, such that it holds
$$
    \begin{aligned}
        &   \left\|\mathcal{E}_k(\rho_k)\right\|_{L^\infty L^2}
        +\nu^{\frac{1}{2}}\left\|\mathcal{E}_k(\partial_{r}\rho_k)\right\|_{L^2L^2}\\
  &\quad+(\nu k^2)^{\frac{1}{2}}\left\|\mathcal{E}_k(\frac{\rho_k}{r})\right\|_{L^2L^2}\\
        &\leq C\left\|\rho_k(0)\right\|_{L^2}+\nu^{-\frac{1}{2}}\left\|\mathcal{E}_k(\frac{rf_1}{k})\right\|_{L^2 L^2}\\
        &+\nu^{-\frac{1}{2}}\left\|\mathcal{E}_k((|g|+r|\partial_{r}g|)f_2)\right\|_{L^2L^2}.
    \end{aligned}
    $$
\end{proposition}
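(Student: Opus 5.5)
The proof follows the energy argument of Proposition \ref{pro5.2} almost verbatim: in the regime $\nu k^2\geq |B|$ the plain viscous dissipation already dominates the enhanced-dissipation rate, so no resolvent estimate is needed. I would take the $L^2$ inner product of \eqref{5.4} with $\rho_k$ and take real parts. The term $\frac{ikB}{r^2}\rho_k$ is purely imaginary in the pairing and drops out. Using the Dirichlet condition $\rho_k|_{r=1,R}=0$, this gives
\begin{equation*}
\frac12\partial_t\|\rho_k\|_{L^2}^2+\nu\|\partial_r\rho_k\|_{L^2}^2+\nu\Big(k^2-\frac14\Big)\Big\|\frac{\rho_k}{r}\Big\|_{L^2}^2=\text{Re}\langle f_1,\rho_k\rangle-\text{Re}\langle g\partial_r f_2,\rho_k\rangle .
\end{equation*}

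For the forcing, I would write $\langle f_1,\rho_k\rangle=\langle \frac{rf_1}{k},k\frac{\rho_k}{r}\rangle$. For the second term I would integrate by parts, with no boundary contribution since $\rho_k$ vanishes at $r=1,R$, and bound
\begin{equation*}
|\langle f_2,\partial_r(g\rho_k)\rangle|\le \|gf_2\|_{L^2}\|\partial_r\rho_k\|_{L^2}+\|r\,\partial_r g\,f_2\|_{L^2}\Big\|\frac{\rho_k}{r}\Big\|_{L^2}.
\end{equation*}
Applying Cauchy--Schwarz with weights $\nu$ and $\nu k^2$ absorbs the $\rho_k$ factors into the dissipation, since $k^2-\frac14\geq\frac34k^2$ for $|k|\ge1$. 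This leaves
\begin{equation*}
\partial_t\|\rho_k\|_{L^2}^2+\nu\|\partial_r\rho_k\|_{L^2}^2+\nu k^2\Big\|\frac{\rho_k}{r}\Big\|_{L^2}^2\lesssim \nu^{-1}\Big(\Big\|\frac{rf_1}{k}\Big\|_{L^2}^2+\|(|g|+r|\partial_rg|)f_2\|_{L^2}^2\Big).
\end{equation*}

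Next I would insert the exponential weight. Since $r\le R$, we have $\|\rho_k/r\|_{L^2}\ge R^{-1}\|\rho_k\|_{L^2}$, so the left side controls $\nu k^2R^{-2}\|\rho_k\|_{L^2}^2$. The hypothesis $\nu k^2\ge|B|$ gives $\nu k^2R^{-2}\ge(\nu k^2)^{1/3}|B|^{2/3}R^{-2}$. Multiplying by $e^{2c'(\nu k^2)^{1/3}|B|^{2/3}R^{-2}t}$ produces the extra term $2c'(\nu k^2)^{1/3}|B|^{2/3}R^{-2}\|\mathcal{E}_k(\rho_k)\|_{L^2}^2$. For $c'$ small and independent of $\nu,B,k,R$, this term is absorbed by half of $\nu k^2\|\mathcal{E}_k(\rho_k/r)\|_{L^2}^2$. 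Integrating in time and taking the supremum gives the $L^\infty L^2$ bound together with the $L^2L^2$ bounds on $\nu^{1/2}\partial_r\rho_k$ and $(\nu k^2)^{1/2}\rho_k/r$. Taking square roots yields the stated estimate.

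The only delicate point is this absorption of the weight-generated term. It relies on the comparison between $\nu k^2R^{-2}$ and the enhanced-dissipation rate, so $c'$ must be chosen uniformly; everything else is routine.
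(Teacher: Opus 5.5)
Your proposal is correct and follows the paper's route. The paper omits this proof and refers to the proof of Proposition \ref{pro5.2}, which is the energy argument you give: pair the equation with the solution, integrate the $g\partial_r f_2$ term by parts, use Cauchy--Schwarz against the $\nu$ and $\nu k^2$ dissipation, and then use $\|\rho_k/r\|_{L^2}\geq R^{-1}\|\rho_k\|_{L^2}$ together with $\nu k^2\geq |B|$ to insert the weight $\mathcal{E}_k$. Your explicit absorption of the weight-generated term $2c'(\nu k^2)^{1/3}|B|^{2/3}R^{-2}\|\mathcal{E}_k(\rho_k)\|_{L^2}^2$ into half of $\nu k^2\|\mathcal{E}_k(\rho_k/r)\|_{L^2}^2$ spells out a step the paper only asserts, and you correctly use the dissipative sign of $\mathcal{L}_\nu$ that the paper's own computations rely on.
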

\begin{proof}
    Similarly to Proposition \ref{pro5.2} proving, we omit the details here.
\end{proof}
\section{Nonlinear stability}
In this section, we prove Theorem \ref{thm1}. Due to the buoyancy $\partial_r \theta$ in the vorticity equation, we need to divide the frequency into $\nu k^2\geq |B|$ and $\nu k^2\leq |B|$  in order to control the buoyancy term. For the two-dimensional Boussinesq system, the global existence of smooth solution is well-known for the data $u_{0}\in H^2(\Omega),\rho_{0}\in H^1(\Omega).$ The main interest of Theorem \ref{thm1} is the stability estimate
\begin{equation}\label{6.1}
    \sum_{k\in\mathbb{Z}}E_k\leq C\epsilon_0\nu^{1/2}|B|^{1/2}R^{-2},\quad
    \sum_{k\in\mathbb{Z}}H_k\leq C\epsilon_1\nu^{7/6}|B|^{5/6}R^{-3}.
\end{equation}
where the energy functional $E_k,H_k$ are defined by
    \begin{equation}
        E_k:=   \left\{
        \begin{aligned}
  & \left\|\mathcal{E}_k (\omega_k)\right\|_{L^{\infty} L^2}+\left(\nu k^2\right)^{\frac{1}{6}}|B|^{\frac{1}{3}} R^{-1}\left\|\mathcal{E}_k (\omega_k)\right\|_{L^2 L^2}\\
      &+|B|^{1/2}|k|^{3/2}R^{-2}\left\|\mathcal{E}_k (\frac{\varphi_k}{r^{1/2}})\right\|_{L^2 L^\infty}\\
      &+\left(\nu k^2\right)^{\frac{1}{2}}\left\|\mathcal{E}_k (\frac{\omega_k}{r})\right\|_{L^2 L^2}
 ,&k\neq 0,\\
            &\|\omega_0\|_{L^\infty L^2},&k=0,\\
        \end{aligned}
        \right.
    \end{equation}
    and
    \begin{equation}
        H_k:=   \left\{
        \begin{aligned}
  &\left\|\mathcal{E}_k (\rho_k)\right\|_{L^{\infty} L^2}+\left(\nu k^2\right)^{\frac{1}{6}}|B|^{\frac{1}{3}} R^{-1}\left\|\mathcal{E}_k (\rho_k)\right\|_{L^2 L^2}\\
 &+\left(\nu k^2\right)^{\frac{1}{2}}\left\|\mathcal{E}_k (\frac{\rho_k}{r})\right\|_{L^2 L^2},
            &k\neq 0,\\
            &\|\rho_0\|_{L^\infty L^2},&k=0.\\
        \end{aligned}
        \right.
    \end{equation}
For notational simplicity, we define the initial energy
\begin{equation}
        \mathcal{M}_k(0):=  \left\{
        \begin{aligned}
 & \left\|\omega_k(0)\right\|_{L^2}+R^{-2}(\log R)^{-3/2}\left\|r^2 \omega_k(0)\right\|_{L^2}\\
 &\quad+R^3\left\|\frac{\omega_k(0)}{r^3}\right\|_{L^2}+R\|\partial_r\omega_k(0)\|_{L^2},&k\neq 0,\\
  &\|\omega_0(0)\|_{L^2},&k=0.
 \end{aligned}
        \right.
    \end{equation}
Combining these estimates with a bootstrap argument, we establish the key inequality \eqref{6.1} stated below:
\begin{proposition}\label{pro6.1}
    There holds that, for $k\neq 0,$
    \begin{equation}\label{6.5}
 \begin{aligned}
     E_k&\leq \mathcal{M}_k(0)+\nu^{-\frac{1}{2}}|B|^{-\frac{1}{2}}R^2\left(\frac{R}{R-1}\right)^{\frac{1}{2}}(1+\log R)\sum_{l \in \mathbb{Z} \backslash(0)}E_lE_{k-l}\\
  &\quad+\nu^{-\frac{2}{3}}|B|^{-\frac{1}{3}}R(H_{k+1}+H_{k-1}),
  \end{aligned}
    \end{equation}
    and
    \begin{equation}\label{6.6}
        E_0\leq ||\omega_0(0)||_{L^2}+\nu^{-\frac{1}{2}}|B|^{-\frac{1}{2}}R^2\sum_{l \in \mathbb{Z} \backslash(0)}E_lE_{-l}+\nu^{-\frac{2}{3}}|B|^{-\frac{1}{3}}R(H_{1}+H_{-1}).
    \end{equation}
    For $H_0,$ there holds that:
    \begin{equation}\label{6.7}
 H_0\leq \nu^{-\frac{1}{2}}|B|^{-\frac{1}{2}}R^2\sum_{l \in \mathbb{Z} \backslash(0)}E_lH_{-l}+\left\| \rho_0(0)\right\|_{L^2}.
    \end{equation}
    For $k\neq 0,$ there holds that:
\begin{equation}\label{6.8}
    H_k\leq \|\rho_k(0)\|_{L^2}+\nu^{-\frac{1}{2}}|B|^{-\frac{1}{2}}R^2\left(\frac{R}{R-1}\right)^{\frac{1}{2}}(1+\log R)\sum_{l \in \mathbb{Z} \backslash(0)}E_lH_{k-l}.
\end{equation}
\end{proposition}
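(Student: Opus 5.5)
We prove each of \eqref{6.5}--\eqref{6.8} by viewing the $k$-th mode equation in \eqref{3.3} as an instance of the linear problems \eqref{5.1} and \eqref{5.4}. We then apply the space-time estimates of Section 5, and finally bound the resulting forcing norms by products of the energies $E_l,H_l$ through the elliptic Lemmas \ref{lem2.1} and \ref{lem2.2}.

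For $k\neq0$ we split the nonlinearity into two pieces. The first is $h_1=-\frac{ik}{r}\sum_l\partial_r(r^{-1/2}\varphi_l)\omega_{k-l}$, a term without derivatives on $\omega$. The second is in divergence form, $-g\partial_r h_2$ with $g=r^{-1/2}$ and $h_2=\sum_l il\,r^{-1}\varphi_l\omega_{k-l}$. The buoyancy contributions go into the forcing as well. The pieces $\frac{k\pm1}{2r}\rho_{k\pm1}$ are placed in $h_1$, and the pieces $r^{1/2}\partial_r(r^{-1/2}\rho_{k\pm1})$ are placed in the $g\partial_r h_2$ form with $g=r^{1/2}$ and $h_2=r^{-1/2}\rho_{k\pm1}$. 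This is the negative-derivative trick mentioned in the abstract, and it avoids putting $\partial_r\rho$ in $L^2$. We then apply Proposition \ref{pro5.1} when $\nu k^2\le|B|$ and Proposition \ref{pro5.2} when $\nu k^2\ge|B|$.

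The initial-data terms $\mathcal{I}_1,\mathcal{I}_2,\mathcal{I}_3$ are absorbed into $\mathcal{M}_k(0)$. For this we use $(\nu/|kB|)^{1/3}\le1$ and $(\nu k^2/|B|)^{2/3}\le1$, together with $\|\omega/r\|\le\|\omega\|$ for $r\ge1$. To convert the $L^2L^\infty$ norm of $r^{-1/2}\varphi_k$ in $E_k$ into $\mathcal N_4$, we use the interpolation $\|f\|_{L^\infty}^2\lesssim\|f\|\,\|f'\|$ combined with Lemma \ref{lem2.1}.

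For the quadratic terms we use the weight identity $\mathcal E_k\le\mathcal E_l\mathcal E_{k-l}$. This holds because $|k|^{2/3}\le|l|^{2/3}+|k-l|^{2/3}$, and it lets the time weight be distributed between the two factors. We then bound $\|r\,\mathcal E h_1\|_{L^2L^2}$ by putting $\partial_r(r^{-1/2}\varphi_l)$ into $L^\infty_tL^\infty_r$ via Lemma \ref{lem2.1}, at the cost of the factor $(R/(R-1))^{1/2}$, and putting $\omega_{k-l}$ into $L^2L^2$ with the enhanced-dissipation weight. Alternatively, the roles of the two factors can be swapped using the inviscid-damping $L^2L^\infty$ norm of $r^{-1/2}\varphi_l$. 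The powers of $\nu$ and $|B|$ are matched so that the prefactor is $\nu^{-1/2}|B|^{-1/2}R^2$. The factor $|k|$ in $h_1$ is split as $|l|+|k-l|$ and distributed onto whichever factor has a derivative weight available. The term $h_2$ is handled analogously, using $\nu^{-1/2}\|\cdot\|_{L^2L^2}$ with $|l|\,\|r^{-3/2}\varphi_l\|_{L^2L^\infty}$ from $E_l$.

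The buoyancy terms give $\mathcal F_1$ of order $\nu^{-1/6}|kB|^{-1/3}|k|\,\|\rho_{k\pm1}\|_{L^2L^2}$. Against the $H$-norm weight $(\nu k^2)^{1/6}|B|^{1/3}R^{-1}$ this leaves $\nu^{-1/3}|B|^{-2/3}R$. The $\mathcal F_2$ term uses $\nu^{-1/2}\|\rho\|_{L^2L^2}$, which compared with $(\nu k^2)^{1/2}\|\rho/r\|$ and $R$ gives at worst the stated factor $\nu^{-2/3}|B|^{-1/3}R$. The factor $e^{\pm iAt}$ is harmless, and the mismatch between $\mathcal E_k$ and $\mathcal E_{k\pm1}$ is bounded because $|k\pm1|\sim|k|$.

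For $k=0$ (estimates \eqref{6.6} and \eqref{6.7}) we use a plain $L^2$ energy estimate of the zero mode. The transport term reduces to $\sum_l$ of a divergence form plus a product, since only $l\neq0$ contributes, and it is bounded by the $L^2L^2$ decay of the nonzero modes, which integrates in time. The buoyancy forcing is $\partial_r\rho_{\pm1}$ plus lower order terms, handled through the dissipation $\nu\|\partial_r\omega_0\|^2$ after integrating by parts. Estimate \eqref{6.8} follows the same scheme as the vorticity case, using Propositions \ref{pro5.7} and \ref{pro5.8} with $f_1,f_2$ the transport terms and with no buoyancy.

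The main obstacle is the power counting that makes every bilinear product close with exactly the stated prefactors uniformly in $k,l$. The delicate regime is $|l|\gg|k|$, where the $|l|$ weight in $h_2$ must be absorbed by inviscid damping. Also delicate is the mixed regime in which $\nu k^2\le|B|$ holds while $\nu l^2\ge|B|$, since there the two modes carry different norms. I would first try handling that regime by the crude bound $(\nu l^2)^{1/2}\ge(\nu l^2)^{1/6}|B|^{1/3}$.
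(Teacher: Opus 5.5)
Your plan follows the paper's proof almost line by line. You use the same splitting of the transport term into $h_1=-\frac{ik}{r}f_1$ and $g\partial_r h_2$ with $g=r^{-1/2}$, and you place the buoyancy the same way: $\frac{k\pm1}{2r}\rho_{k\pm1}$ goes in $h_1$, and $r^{1/2}\partial_r(r^{-1/2}\rho_{k\pm1})$ goes in divergence form with $g=r^{1/2}$. You invoke Propositions \ref{pro5.1}/\ref{pro5.2} and \ref{pro5.7}/\ref{pro5.8} in the two frequency regimes, Lemmas \ref{lem2.1}--\ref{lem2.2} with $\mathcal E_k\le\mathcal E_l\mathcal E_{k-l}$ for the quadratic terms, and $r$-weighted energy estimates for the zero modes. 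The quadratic power counting and the zero-mode estimates \eqref{6.6}--\eqref{6.7} go through as you describe. The gap is your claim that ``the mismatch between $\mathcal E_k$ and $\mathcal E_{k\pm1}$ is bounded because $|k\pm1|\sim|k|$.'' This is false. The paper's proof does not supply a substitute either: in passing from \eqref{6.12} to \eqref{6.16} it simply bounds $\|\mathcal E_k(\rho_{k\pm1})\|_{L^2L^2}$ by multiples of $H_{k\pm1}$.

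Comparable rates do not give comparable weights. One has
\[
\mathcal E_k(\rho_{k\pm1})=\exp\bigl(c'\nu^{1/3}|B|^{2/3}R^{-2}(|k|^{2/3}-|k\pm1|^{2/3})\,t\bigr)\,\mathcal E_{k\pm1}(\rho_{k\pm1}),
\]
and the prefactor is unbounded in $t$ whenever $|k\pm1|<|k|$. For the quadratic terms subadditivity saves you, because both factors carry a weight. The buoyancy forcing, however, is linear in $\rho_{k\pm1}$, and $H_{k\pm1}$ only guarantees decay at the slower rate.

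At $k=\pm1$ this is not mere bookkeeping. There the neighbour is the temperature zero mode $\rho_0=r^{1/2}\rho_=$. It enters through the $\mathcal F_2$ term as $\nu^{-1/2}\|\mathcal E_{\pm1}(\rho_0)\|_{L^2L^2}$, while $H_0=\|\rho_0\|_{L^\infty L^2}$ carries neither time integrability nor a weight. Indeed $\rho_=$ has no enhanced dissipation. It decays only at the heat rate $\nu\lambda_1$, where $\lambda_1\lesssim(R-1)^{-2}$ is the first Dirichlet eigenvalue. For small $\nu$ this is much slower than $c'\nu^{1/3}|B|^{2/3}R^{-2}$.

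This already fails for the linearized system. Take the initial temperature radial and equal to that eigenfunction, with zero initial vorticity. Then $\omega_{\pm1}$ is driven by $\frac12e^{\pm iAt}r^{1/2}\partial_r\rho_=$. Since the semigroup of the $\omega_{\pm1}$ equation decays faster, $\omega_{\pm1}$ behaves for large $t$ like $e^{-\nu\lambda_1 t}$ times a fixed nonzero profile. Hence $\|\mathcal E_{\pm1}(\omega_{\pm1})\|_{L^\infty L^2}=\infty$, although $H_0<\infty$ and $H_{\pm2}=0$.

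So the linear bound your argument needs fails at $k=\pm1$. That bound says the buoyancy contribution to $E_k$ is $\lesssim\nu^{-2/3}|B|^{-1/3}R(H_{k+1}+H_{k-1})$. This route therefore cannot give \eqref{6.5} there without changing the energy functional for the modes $\pm1$, or the treatment of $\rho_=$. In \eqref{6.6} the problem does not arise, since $\mathcal E_0=1\le\mathcal E_{\pm1}$.

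A minor point: to absorb $\mathcal I_3$ into $\mathcal M_k(0)$, use the Hardy bound $\|\omega_k(0)/r\|_{L^2}\le2\|\partial_r\omega_k(0)\|_{L^2}$, valid since $\omega_k(0)|_{r=1}=0$. Do not use $\|\omega/r\|\le\|\omega\|$, because $R\|\omega_k(0)\|_{L^2}$ is not part of $\mathcal M_k(0)$.
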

\begin{proof}
\noindent {\bf Step 1. The estimate of \eqref{6.5}.}
Set $$E_0=\|\omega_0\|_{L^\infty L^2}=||    r^{1/2}\omega_=||_{L^\infty L^2},\quad H_0=\|\rho_0\|_{L^\infty L^2}=||r^{1/2}\rho_=||_{L^\infty L^2},$$ and for $k\neq 0,$
 \begin{equation}
 \begin{aligned}
      &E_k= \left\|\mathcal{E}_k (\omega_k)\right\|_{L^{\infty} L^2}+\left(\nu k^2\right)^{\frac{1}{6}}|B|^{\frac{1}{3}} R^{-1}\left\|\mathcal{E}_k (\omega_k)\right\|_{L^2 L^2}\\
      &\quad+|B|^{\frac{1}{2}}|k|^{\frac{3}{2}}R^{-2}\left\|\mathcal{E}_k (\frac{\varphi_k}{r^{\frac{1}{2}}})\right\|_{L^2 L^\infty}+\left(\nu k^2\right)^{\frac{1}{2}}\left\|\mathcal{E}_k (\frac{\omega_k}{r})\right\|_{L^2 L^2},
     \end{aligned}
 \end{equation}
    and
 \begin{equation}
     \begin{aligned}
 H_k= &\left\|\mathcal{E}_k (\rho_k)\right\|_{L^{\infty} L^2}+\left(\nu k^2\right)^{\frac{1}{6}}|B|^{\frac{1}{3}} R^{-1}\left\|\mathcal{E}_k (\rho_k)\right\|_{L^2 L^2}\\
 &+\left(\nu k^2\right)^{\frac{1}{2}}\left\|\mathcal{E}_k (\frac{\rho_k}{r})\right\|_{L^2 L^2}.
\end{aligned}
 \end{equation}
Denoting $$
f_1=\sum_{l \in \mathbb{Z}} \partial_r\left(r^{-\frac{1}{2}} \varphi_l\right) w_{k-l}, \quad f_2=\sum_{l \in \mathbb{Z}} i l r^{-\frac{3}{2}} \varphi_l w_{k-l},
$$ then we have
\begin{equation}
\begin{aligned}
&   \partial_t \omega_k-\nu\left(\partial_r^2-\frac{k^2-\frac{1}{4}}{r^2}\right) \omega_k+\frac{i k B}{r^2} \omega_k +\frac{1}{r}\left[i k f_1-r^{\frac{1}{2}} \partial_r\left( r^{1/2} f_2\right)\right]\\
&=\frac{e^{iAt}}{2}\left[ r^{1/2}\partial_{r}(r^{-1/2}\rho_{k-1})\right]
+\frac{e^{-iAt}}{2}\left[ r^{1/2}\partial_{r}(r^{-1/2}\rho_{k+1})\right]\\
&\quad+e^{iAt}\frac{k+1}{2r}\rho_{k+1}-e^{-iAt}\frac{k-1}{2r}\rho_{k-1}.\\
        \end{aligned}
            \end{equation}
 It follows from Proposition \ref{pro5.1} that
 \begin{equation}\label{6.12}
     \begin{aligned}
    E_k&\lesssim \mathcal{M}_k(0)+\nu^{-\frac{1}{6}}|k|^{\frac{2}{3}}|B|^{-\frac{1}{3}}\left\|\mathcal{E}_k (f_1)\right\|_{L^2 L^2}+\nu^{-\frac{1}{2}}\left\|\mathcal{E}_k (f_2)\right\|_{L^2 L^2}\\
 &+\nu^{-1/6}|kB|^{-1/3}\|\frac{k+1}{2}\mathcal{E}_k(\rho_{k+1})\|_{L^2L^2}\\
    &+\nu^{-1/6}|kB|^{-1/3}\|\frac{k-1}{2}\mathcal{E}_k(\rho_{k-1})\|_{L^2L^2}\\
 &+\nu^{-\frac{1}{2}}\left\|\mathcal{E}_k (\rho_{k+1})\right\|_{L^2 L^2}+\nu^{-\frac{1}{2}}\left\|\mathcal{E}_k (\rho_{k-1})\right\|_{L^2 L^2}.\\
    &
\end{aligned}
\end{equation}
 For the  nonlinear terms
$$
f_1=\sum_{l \in \mathbb{Z}} \partial_r\left(r^{-\frac{1}{2}} \varphi_l\right) \omega_{k-l},
$$
according to Lemma \ref{lem2.1} and Lemma \ref{lem2.2},
we get
{\small
\begin{equation}
    \begin{aligned}
     &\left\|f_1\right\|_{L^2}
 =\left\|\sum_{l \in \mathbb{Z}} \partial_r\left(r^{-\frac{1}{2}} \varphi_l\right) \omega_{k-l}\right\|_{L^2} \\
     &\leq \sum_{l \in \mathbb{Z} \setminus\left\lbrace 0, k\right\rbrace }\left\|\partial_r\left(r^{-\frac{1}{2}} \varphi_l \right)\right\|_ {L ^\infty}\left\|\omega_{k-l}\right\|_{L^2}+\left\|\partial_r\left(r^{-\frac{1}{2}} \varphi_0\right)\right\|_ {L^\infty }\left\|\omega_k\right\|_{L^2}+\left\|\partial_r\left(r^{-\frac{1}{2}} \varphi_k\right)\right\| _{L^ \infty}\left\|\omega_0\right\|_{L^2}\\
 & \lesssim \left(\frac{R}{R-1}\right)^{\frac{1}{2}} (1+\log R)\cdot\\
 &\left(\sum_{l \in \mathbb{Z} \setminus\left\lbrace 0, k\right\rbrace }|l|^{-\frac{1}{2}}\left\|r\omega_l\right\|_ {L ^2}\left\|\omega_{k-l}\right\|_{L^2}+\left\|r\omega_k\right\|_ {L^2 }\left\|\omega_0\right\|_{L^2}+\left\|r\omega_0\right\|_{L^2 }\left\|\omega_k\right\|_{L^2}\right)\\
    & \lesssim R\left(\frac{R}{R-1}\right)^{\frac{1}{2}} \cdot(1+\log R) \cdot\left(\sum_{l \in \mathbb{Z} \setminus\left\lbrace 0, k\right\rbrace }\left|l|^{-\frac{1}{2}}\|\omega_l\right\|_{L^2}\left\|\omega_{k-l}\right\|_{L^2}+\left\|\omega_0\right\|_{L^2}\left\|\omega_k\right\|_{L^2}\right).
\end{aligned}
\end{equation}}
 For the  nonlinear terms
 $$
f_2=\sum_{l \in \mathbb{Z}} i l r^{-\frac{3}{2}} \varphi_l w_{k-l},
$$
the $L^2$ norm of $f_2$ can be directly controlled by
$$
\left\|f_2\right\|_{L^2}=\left\|\sum_{l \in \mathbb{Z} \backslash(0)} \frac{l \varphi_l \omega_{k-l}}{r^{\frac{3}{2}}}\right\|_{L^2} \leq \sum_{l \in \mathbb{Z} \backslash(0)}|l|\left\|\frac{\varphi_l}{r^{\frac{1}{2}}}\right\|_{L^{\infty}}\left\|\frac{\omega_{k-l}}{r}\right\|_{L^2} .
$$
Therefore, we can obtain {\small \begin{equation}\label{6.14}
    \begin{aligned}
    &\nu^{-\frac{1}{6}}|k|^{\frac{2}{3}}|B|^{-\frac{1}{3}}\left\|\mathcal{E}_k (f_1)\right\|_{L^2 L^2}\\
     &\lesssim \nu^{-\frac{1}{6}}|k|^{\frac{2}{3}}|B|^{-\frac{1}{3}}R\left(\frac{R}{R-1}\right)^{\frac{1}{2}} \cdot(1+\log R) \cdot\\
 &\sum_{l \in \mathbb{Z} \setminus\left\lbrace 0, k\right\rbrace }|l|^{-1/2}\left\|\mathcal{E}_l(\omega_l)\right\|_{L^\infty L^2}\left\|\mathcal{E}_{k-l}(\omega_{k-l})\right\|_{L^2L^2}\\
 &+\nu^{-\frac{1}{6}}|k|^{\frac{2}{3}}|B|^{-\frac{1}{3}}R\left(\frac{R}{R-1}\right)^{\frac{1}{2}}(1+\log R)
 \left\|\omega_0\right\|_{L^\infty L^2}\left\|\mathcal{E}_k(\omega_k)\right\|_{L^2L^2} \\
&   \lesssim  \nu^{-\frac{1}{6}}|k|^{\frac{2}{3}}|B|^{-\frac{1}{3}}R \left(\frac{R}{R-1}\right)^{\frac{1}{2}}(1+\log R) \cdot \\
&    \left( \left(\nu k^2\right)^{-\frac{1}{12}}|B|^{-\frac{1}{6}} R^{1/2} (\nu k^2)^{-\frac{1}{4}}R^{1/2}E_0 E_k
 +\sum_{l \in \mathbb{Z} \backslash\{0, k\}}|l|^{-1/2}\left(\nu (k-l)^2\right)^{-\frac{1}{6}}|B|^{-\frac{1}{3}} R  E_l E_{k-l}\right) \\
&   \lesssim  R^2\left(\frac{R}{R-1}\right)^{\frac{1}{2}}(1+\log R)\left(\nu^{-\frac{1}{2}}|B|^{-\frac{1}{2}}E_0 E_k+\nu^{-\frac{1}{3}}|B|^{-\frac{2}{3}}|k|^{1/3}\sum_{l \in \mathbb{Z} \backslash 0, k\}} E_l E_{k-l}\right),
\end{aligned}
\end{equation}}
where in the last line we use the fact
$$
|k| \leq 2|l||k-l| \text { for any } k \in \mathbb{Z}, l \in \mathbb{Z} \backslash\{0, k\},
$$
and the following basic inequality $$|k|^\alpha
\leq|l|^\alpha+|k-l|^\alpha\text { for any } k, l \in \mathbb{Z}
\text { and any } \alpha \in(0,1].$$ Moreover, one gets
\begin{equation}\label{6.15}
    \begin{aligned}
        \nu^{-\frac{1}{2}}\left\|f_2\right\|_{L^2L^2}&=\nu^{-\frac{1}{2}}\left\|\sum_{l \in \mathbb{Z} \backslash(0)} \frac{l \varphi_l w_{k-l}}{r^{\frac{3}{2}}}\right\|_{L^2} \leq \nu^{-\frac{1}{2}}\sum_{l \in \mathbb{Z} \backslash(0)}|l|\left\|\frac{\varphi_l}{r^{\frac{1}{2}}}\right\|_{L^2L^{\infty}}\left\|\frac{w_{k-l}}{r}\right\|_{L^\infty L^2}\\
        &\leq \nu^{-\frac{1}{2}}|B|^{-\frac{1}{2}}R^2\sum_{l \in \mathbb{Z} \backslash(0)}E_lE_{k-l}.
\end{aligned}
\end{equation}
Thus, by \eqref{6.12}, \eqref{6.14} and \eqref{6.15}, we can obtain
that for $\nu k^2\leq |B|$
     \begin{equation}\label{6.16}
         \begin{aligned}
    E_k&\lesssim\mathcal{M}_k(0)+R^2\left(\frac{R}{R-1}\right)^{\frac{1}{2}}(1+\log R)\cdot\\
 &\left(\nu^{-\frac{1}{2}}|B|^{-\frac{1}{2}}E_0 E_k+\nu^{-\frac{1}{3}}|B|^{-\frac{2}{3}}|k|^{\frac{1}{3}}\sum_{l \in \mathbb{Z} \backslash 0, k\}} E_l E_{k-l}\right)\\
 &+\nu^{-\frac{1}{2}}|B|^{-\frac{1}{2}}R^2\sum_{l \in \mathbb{Z} \backslash(0)}E_lE_{k-l}+\nu^{-1/2}|B|^{-1/2}RH_{k+1}+\nu^{-\frac{1}{2}}|B|^{-\frac{1}{2}}RH_{k-1}\\
 &+\nu^{-\frac{2}{3}}|B|^{-\frac{1}{3}}RH_{k+1}+\nu^{-\frac{2}{3}}|B|^{-\frac{1}{3}}RH_{k-1}\\
 &\lesssim \mathcal{M}_k(0)+R^2\left(\frac{R}{R-1}\right)^{\frac{1}{2}}(1+\log R)\nu^{-\frac{1}{2}}|B|^{-\frac{1}{2}}\sum_{l \in \mathbb{Z} \backslash(0)}E_lE_{k-l}\\
 &\quad+\nu^{-\frac{2}{3}}|B|^{-\frac{1}{3}}R(H_{k+1}+H_{k-1}).
\end{aligned}
\end{equation}\label{6.17}
    For $\nu k^2\geq |B|,$ due to Proposition \ref{pro5.2}, we also have
 \begin{equation}\label{6.17}
        \begin{aligned}
            E_k&\leq C\left\|\omega_k(0)\right\|_{L^2}+\nu^{-1/2}\left\|\mathcal{E}_k(f_1)\right\|_{L^2 L^2}
        +\nu^{-1/2}\left\|\mathcal{E}_k(f_2)\right\|_{L^2}\\
   &+\nu^{-1/2}(\|\mathcal{E}_k(\rho_{k+1})\|_{L^2L^2}+\|\mathcal{E}_k(\rho_{k-1})\|_{L^2L^2})\\
  \end{aligned}
  \end{equation}
  Then, we obtain
  \begin{equation}
\begin{aligned}
 &E_k \leq C\left\|\omega_k(0)\right\|_{L^2}+C R\left(\frac{R}{R-1}\right)^{\frac{1}{2}}(1+\log R)\cdot\\
   &\quad\nu^{-\frac{1}{2}}\left( \left(\nu k^2\right)^{-\frac{1}{12}}|B|^{-\frac{1}{6}} R^{1/2} (\nu k^2)^{-\frac{1}{4}}R^{1/2}E_0 E_k\right)\\
 &\quad+C R\left(\frac{R}{R-1}\right)^{\frac{1}{2}}(1+\log R)\nu^{-\frac{1}{2}}\left(\sum_{l \in \mathbb{Z} \backslash\{0, k\}}|l|^{-1/2}\left(\nu (k-l)^2\right)^{-\frac{1}{6}}|B|^{-\frac{1}{3}} R  E_l E_{k-l}\right)\\
 &\quad+\nu^{-\frac{1}{2}}|B|^{-\frac{1}{2}}R^2\sum_{l \in \mathbb{Z} \backslash(0)}E_lE_{k-l}++\nu^{-2/3}|B|^{-1/3}R(H_{k+1}+H_{k-1})\\
            &\lesssim ||\omega_{k}(0)||_{L^2}+C R^2\left(\frac{R}{R-1}\right)^{\frac{1}{2}}(1+\log R)\nu^{-\frac{1}{2}}|B|^{-\frac{1}{2}}\sum_{l \in \mathbb{Z} \backslash(0)}E_lE_{k-l}\\
   &\quad+\nu^{-\frac{2}{3}}|B|^{-\frac{1}{3}}R(H_{k+1}+H_{k-1}).
        \end{aligned}
    \end{equation}
Combining the \eqref{6.16} and \eqref{6.17}, we establish the
\eqref{6.5}. 
\vskip .1in \noindent {\bf Step 2. The estimate of
\eqref{6.6}.}
    To derive the space-time estimates for the zero mode of solutions, we explicitly exploit the inherent heat dissipation structure of the governing equation and employ integration by parts techniques. Here we recall that the nonlinear perturbation system is formulated as follows:
\begin{equation}
    \left\{
\begin{aligned}
    &   \partial_t \omega-\nu \left(\partial_r^2+\frac{1}{r} \partial_r+\frac{1}{r^2} \partial_\theta^2\right) \omega+\left(A+\frac{B}{r^2}\right) \partial_\theta \omega+\frac{1}{r}\left(\partial_r \varphi \partial_\theta \omega-\partial_\theta \varphi \partial_r \omega\right)\\
    &\quad=\cos\theta\partial_{r}\rho-\frac{\sin\theta}{r}\partial_{\theta}\rho, \\
    &   \partial_t \rho-\nu\left(\partial_r^2+\frac{1}{r} \partial_r+\frac{1}{r^2} \partial_\theta^2\right) \rho+\left(A+\frac{B}{r^2}\right) \partial_\theta \rho+\frac{1}{r}\left(\partial_r \varphi \partial_\theta \rho-\partial_\theta \varphi \partial_r \rho\right)=0, \\
&   \left(\partial_r^2+\frac{1}{r} \partial_r+\frac{1}{r^2} \partial_\theta^2\right) \varphi=\omega,\\
&\left.(\omega,\rho)\right|_{r=1, R}=0 \quad \text { with }(r, \theta) \in[1, R] \times \mathbb{S}^1 \text { and } t \geq 0.
    \end{aligned}
\right.
\end{equation}
We see that the zero mode part about $\omega$ takes the form
\begin{equation}\label{6.19}
    \left\{
    \begin{aligned}
        &   \partial_t \omega_{=}-\nu\left(\partial_r^2+\frac{1}{r} \partial_r \right) \omega_{=}+\frac{1}{r}\left(\partial_r \varphi \partial_\theta \omega-\partial_\theta \varphi \partial_r \omega\right)_{=}\\
        &\quad=\frac{\partial_{r}(\hat{\rho}_{-1}+\hat{\rho}_{1})}{2}+ \frac{\hat{\rho}_{1}+\hat{\rho}_{-1}}{2r},\\
        &   \partial_t \rho_{=}-\nu\left(\partial_r^2+\frac{1}{r} \partial_r \right) \rho_{=}+\frac{1}{r}\left(\partial_r \varphi \partial_\theta \rho-\partial_\theta \varphi \partial_r \rho\right)_{=}=0\\
        &   \left(\partial_r^2+\frac{1}{r} \partial_r\right) \varphi_{=}=\omega_{=},\left.\quad (\omega_{=},\rho_{=})\right|_{r=1, R}=0.
    \end{aligned}
    \right.
\end{equation}
By introducing $\omega_0=r^{1/2}\omega_{=},$ and employing integration by parts, we deduce that
$$
\begin{aligned}
&\text{R}e\left\langle\frac{\partial_{r}(\hat{\rho}_{-1}+\hat{\rho}_{1})}{2}+ \frac{\hat{\rho}_{1}+\hat{\rho}_{-1}}{2r},r w_=\right\rangle\\
&=\text{Re}\left\langle\partial_t w_=-\nu\left(\partial_r^2+\frac{1}{r} \partial_r\right) w_=+\frac{1}{r}\left(\partial_r \varphi \partial_\theta w-\partial_\theta \varphi \partial_r w\right)_=, r w_=\right\rangle \\
& =\frac{1}{2} \partial_r\left\|r^{\frac{1}{2}} w_=(t)\right\|_{L^2}^2+\nu\left\|r^{\frac{1}{2}} \partial_r w_=\right\|_{L^2}^2+\text{Re}\left\langle\left[\partial_r\left(\varphi \partial_\theta w\right)-\partial_\theta\left(\varphi \partial_r w\right)\right]_=, w_=\right\rangle .
\end{aligned}
$$
Observe that
$$
\left[\partial_r\left(\varphi \partial_\theta w\right)\right]_==\partial_r\left(\varphi \partial_\theta w\right)_=\text { and }\left[\partial_\theta\left(\varphi \partial_r w\right)\right]_==\partial_\theta\left(\varphi \partial_r w\right)_==0 .
$$
Applying integration by parts once more, we subsequently get
$$
\begin{aligned}
& \frac{1}{2} \partial_t\left\|r^{\frac{1}{2}} \omega_=(t)\right\|_{L^2}^2+\nu\left\|r^{\frac{1}{2}} \partial_r \omega_=\right\|_{L^2}^2+\text{Re}\left(\partial_r\left(\varphi \partial_\theta \omega\right)_=, w_=\right) \\
& \quad=\frac{1}{2} \partial_t\left\|r^{\frac{1}{2}} w_=(t)\right\|_{L^2}^2+\nu\left\|r^{\frac{1}{2}} \partial_r \omega_=\right\|_{L^2}^2-\text{Re}\left\langle\left(\varphi \partial_\theta \omega\right)_=, \partial_r \omega_=\right\rangle\\
&\quad= \text{Re}\left\langle\frac{\hat{\rho}_{1}+\hat{\rho}_{-1}}{2},r \partial_r\omega_=\right\rangle.
\end{aligned}
$$
Consequently, we derive the subsequent inequality
$$
\begin{aligned}
    \partial_t\left\|r^{\frac{1}{2}}w_=(t)\right\|_{L^2}^2+2 \nu\left\|r^{\frac{1}{2}} \partial_r w_=\right\|_{L^2}^2
    &\leq 2\left\|r^{-\frac{1}{2}}(\varphi \partial_\theta w)_=\right\|_{L^2}\left\|r^{\frac{1}{2}} \partial_r w_=\right\|_{L^2}\\
    &+\|r^{\frac{1}{2}}(\hat{\rho}_{1}+\hat{\rho}_{-1})\|_{L^2}\left\|r^{\frac{1}{2}} \partial_r w_=\right\|_{L^2},
\end{aligned}
$$
which renders
$$
\partial_t\left\|r^{\frac{1}{2}} w_=(t)\right\|_{L^2}^2+\nu\left\|r^{\frac{1}{2}} \partial_r w_=\right\|_{L^2}^2 \leq \nu^{-1}\left\|\frac{\left(\varphi \partial_\theta w\right)_=}{r^{\frac{1}{2}}}\right\|_{L^2}^2+ \nu^{-1}\|r^{\frac{1}{2}}(\hat{\rho}_{1}+\hat{\rho}_{-1})\|_{L^2}^2.
$$
We proceed by integrating the inequality over the temporal variable
$t$ to yield
$$
\begin{aligned}
&\left\|r^{\frac{1}{2}} w_=(t)\right\|_{L^2}^2+\nu \int_0^t\left\|r^{\frac{1}{2}} \partial_r w_=(s)\right\|_{L^2}^2 d s \\
&\leq \nu^{-1} \int_0^t\left\|\frac{(\varphi \partial \theta w)_=(s)}{r^{\frac{1}{2}}}\right\|_{L^2}^2 d s+\left\|r^{\frac{1}{2}} w_=(0)\right\|_{L^2}^2\\
&\quad+\nu^{-1}\int_0^t \|r^{\frac{1}{2}}(\hat{\rho}_{1}+\hat{\rho}_{-1})\|_{L^2}^2ds.
\end{aligned}
$$
Recalling the precise definitions of  $\omega_k, \varphi_k$
$$
\begin{aligned}
w_k(t, r) & =r^{\frac{1}{2}} e^{i k A t} \hat{w}_k(t, r)=\frac{1}{2 \pi} \int_0^{2 \pi} r^{\frac{1}{2}} e^{i k A t} w(t, r, \theta) e^{-i k \theta} d \theta, \\
\varphi_k(t, r) & =r^{\frac{1}{2}} e^{i k A t} \hat{\varphi}_k(t, r)=\frac{1}{2 \pi} \int_0^{2 \pi} r^{\frac{1}{2}} e^{i k A t} \varphi(t, r, \theta) e^{-i k \theta} d \theta,
\end{aligned}
$$
we can further express
$$
\int_0^t\left\|\frac{\left(\varphi \partial_\theta \omega\right)=(s)}{r^{\frac{1}{2}}}\right\|_{L^2}^2 d s=\left\|\sum_{l \in \mathbb{Z} \backslash\{0\}} r^{-\frac{1}{2}} \hat{\varphi}_l(i l) \hat{w}_{-l}\right\|_{L^2 L^2}^2=\left\|\sum_{l \in \mathbb{Z} \backslash\{0\}} \frac{l \varphi_l \omega_{-l}}{r^{\frac{3}{2}}}\right\|_{L^2 L^2}^2 .
$$
By performing temporal integration on the inequality with respect to the $t$ variable, we rigorously establish that
$$
\begin{aligned}
\left\|r^{\frac{1}{2}} w_=(t)\right\|_{L^\infty L^2}^2+\nu \left\|r^{\frac{1}{2}} \partial_r w_=\right\|_{L^2L^2}^2  &\leq \nu^{-1}\left\|\sum_{l \in \mathbb{Z} \backslash\{0\}} \frac{l \varphi_l w_{-l}}{r^{\frac{3}{2}}}\right\|_{L^2 L^2}^2 +\left\|r^{\frac{1}{2}} w_=(0)\right\|_{L^2}^2\\
&+ \nu^{-1}\|r^{\frac{1}{2}}(\hat{\rho}_{1}+\hat{\rho}_{-1})\|_{L^2L^2}^2.
\end{aligned}
$$
Thus, by using \eqref{6.15}, we obtain
    $$E_0\lesssim ||\omega_0(0)||_{L^2}+\nu^{-\frac{1}{2}}|B|^{-\frac{1}{2}}R^2\sum_{l \in \mathbb{Z} \backslash(0)}E_lE_{-l}+\nu^{-2/3}|B|^{-1/3}R(H_{1}+H_{-1}).$$
\noindent {\bf Step 3. The estimate of \eqref{6.7}.}
Similarly, we can derive the evolution equation of $\rho_=,$
$$
\begin{aligned}
&0=\text{Re}\left\langle\partial_t \rho_=-\nu\left(\partial_r^2+\frac{1}{r} \partial_r\right) \rho_=+\frac{1}{r}\left(\partial_r \varphi \partial_\theta \rho-\partial_\theta \varphi \partial_r \rho\right)_=, r \rho_=\right\rangle \\
& =\frac{1}{2} \partial_r\left\|r^{\frac{1}{2}} \rho_=(t)\right\|_{L^2}^2+\nu\left\|r^{\frac{1}{2}} \partial_r \rho_=\right\|_{L^2}^2+\text{Re}\left\langle\left[\partial_r\left(\varphi \partial_\theta \rho\right)-\partial_\theta\left(\varphi \partial_r \rho\right)\right]_=, \rho_=\right\rangle.
\end{aligned}
$$
Observe that
$$
\left[\partial_r\left(\varphi \partial_\theta \rho\right)\right]_==\partial_r\left(\varphi \partial_\theta \rho\right)_=\text { and }\left[\partial_\theta\left(\varphi \partial_r \rho\right)\right]_==\partial_\theta\left(\varphi \partial_r \rho\right)_==0 .
$$
Employing integration by parts, we deduce that
$$
\begin{aligned}
& \frac{1}{2} \partial_t\left\|r^{\frac{1}{2}} \rho_=(t)\right\|_{L^2}^2+\nu\left\|r^{\frac{1}{2}} \partial_r \rho_=\right\|_{L^2}^2+\text{Re}\left(\partial_r\left(\varphi \partial_\theta \rho\right)_=, \rho_=\right) \\
& \quad=\frac{1}{2} \partial_t\left\|r^{\frac{1}{2}} \rho_=(t)\right\|_{L^2}^2+\nu\left\|r^{\frac{1}{2}} \partial_r \rho_=\right\|_{L^2}^2-\text{Re}\left\langle\left(\varphi \partial_\theta \rho\right)_=, \partial_r \rho_=\right\rangle=0.
\end{aligned}
$$
Consequently, we derive the subsequent inequality
$$
\partial_t\left\|r^{\frac{1}{2}}\rho_=(t)\right\|_{L^2}^2+2 \nu\left\|r^{\frac{1}{2}} \partial_r \rho_=\right\|_{L^2}^2 \leq 2\left\|r^{-\frac{1}{2}}(\varphi \partial_\theta \rho)_=\right\|_{L^2}\left\|r^{\frac{1}{2}} \partial_r \rho_=\right\|_{L^2},
$$
which renders
$$
\partial_t\left\|r^{\frac{1}{2}} \rho_=(t)\right\|_{L^2}^2+\nu\left\|r^{\frac{1}{2}} \partial_r \rho_=\right\|_{L^2}^2 \leq \nu^{-1}\left\|\frac{\left(\varphi \partial_\theta \rho\right)_=}{r^{\frac{1}{2}}}\right\|_{L^2}^2.
$$
We proceed to integrate the above inequality in $t$ variable and get
$$
\begin{aligned}
\left\|r^{\frac{1}{2}} \rho_=(t)\right\|_{L^2}^2+\nu\int_0^t\left\|r^{\frac{1}{2}} \partial_r \rho_=(s)\right\|_{L^2}^2 d s &\leq \nu^{-1} \int_0^t\left\|\frac{(\varphi \partial \theta \rho)_=(s)}{r^{\frac{1}{2}}}\right\|_{L^2}^2 d s+\left\|r^{\frac{1}{2}} \rho_=(0)\right\|_{L^2}^2.\\
\end{aligned}
$$
Recalling the precise definitions of $\rho_k, \varphi_k$:
$$
\begin{aligned}
\rho_k(t, r)& =r^{\frac{1}{2}} e^{i k A t} \hat{\rho}_k(t, r)=\frac{1}{2 \pi} \int_0^{2 \pi} r^{\frac{1}{2}} e^{i k A t} \rho(t, r, \theta) e^{-i k \theta} d \theta,\\
\varphi_k(t, r) & =r^{\frac{1}{2}} e^{i k A t} \hat{\varphi}_k(t, r)=\frac{1}{2 \pi} \int_0^{2 \pi} r^{\frac{1}{2}} e^{i k A t} \varphi(t, r, \theta) e^{-i k \theta} d \theta,
\end{aligned}
$$
we can further express
$$
\int_0^t\left\|\frac{\left(\varphi \partial_\theta \rho\right)_=(s)}{r^{\frac{1}{2}}}\right\|_{L^2}^2 d s=\left\|\sum_{l \in \mathbb{Z} \backslash|\theta|} r^{-\frac{1}{2}} \hat{\varphi}_l(i l) \hat{\rho}_{-l}\right\|_{L^2 L^2}^2=\left\|\sum_{l \in \mathbb{Z} \backslash|0|} \frac{l \varphi_l \rho_{-l}}{r^{\frac{3}{2}}}\right\|_{L^2 L^2}^2.
$$
By performing temporal integration on the inequality with respect to the $t$ variable, we rigorously establish that
$$
\begin{aligned}
\left\|r^{\frac{1}{2}} \rho_=(t)\right\|_{L^\infty L^2}^2+\nu \left\|r^{\frac{1}{2}} \partial_r \rho_=\right\|_{L^2L^2}^2  &\leq \nu^{-1}\left\|\sum_{l \in \mathbb{Z} \backslash|0|} \frac{l \varphi_l \rho_{-l}}{r^{\frac{3}{2}}}\right\|_{L^2 L^2}^2 +\left\|r^{\frac{1}{2}} \rho_=(0)\right\|_{L^2}^2.
\end{aligned}
$$
Thus, we obtain
    $$H_0\lesssim \nu^{-\frac{1}{2}}|B|^{-\frac{1}{2}}R^2\sum_{l \in \mathbb{Z} \backslash(0)}E_lH_{-l}+\left\|r^{\frac{1}{2}} \rho_=(0)\right\|_{L^2}.$$

 \noindent {\bf Step 4. The estimate of \eqref{6.8}.}
 Denoting
  $$
g_1=\sum_{l \in \mathbb{Z}} \partial_r\left(r^{-\frac{1}{2}} \varphi_l\right) \rho_{k-l}, \quad g_2=\sum_{l \in \mathbb{Z}} i l r^{-\frac{3}{2}} \varphi_l \rho_{k-l},
$$
we have
        $$\partial_t \rho_k-\nu\left(\partial_r^2-\frac{k^2-\frac{1}{4}}{r^2}\right) \rho_k+\frac{i k B}{r^2} \rho_k+\frac{1}{r}\left[i k g_1-r^{\frac{1}{2}} \partial_r\left(r^{1/2}g_2\right)\right] =0.$$
     It follows from Proposition \ref{pro5.7} that
  \begin{equation}\label{6.20}
  \begin{aligned}
      &H_k\leq C\left\|\rho_k(0)\right\|_{L^2}+\nu^{-\frac{1}{6}}|k|^{\frac{2}{3}}|B|^{-\frac{1}{3}}\left\|\mathcal{E}_k (g_1)\right\|_{L^2 L^2}\\
        &\quad+\nu^{-\frac{1}{2}}\left\|\mathcal{E}_k(g_2)\right\|_{L^2},
        \end{aligned}
   \end{equation}
   and due to Lemma \ref{lem2.1} and Lemma \ref{lem2.2}, then we obtain
 \begin{equation}\label{6.21}
\begin{aligned}
    &\nu^{-\frac{1}{6}}|k|^{\frac{2}{3}}|B|^{-\frac{1}{3}}\left\|\mathcal{E}_k  (g_1)\right\|_{L^2 L^2}
     \lesssim \nu^{-\frac{1}{6}}|k|^{\frac{2}{3}}|B|^{-\frac{1}{3}}\left(\frac{R}{R-1}\right)^{\frac{1}{2}}(1+\log R) \cdot\\
 &\sum_{ l\in \mathbb{Z} \setminus\left\lbrace 0, k\right\rbrace }|l|^{-\frac{1}{2}}\left\|\mathcal{E}_l(\omega_l)\right\|_{L^\infty L^2}\left\|\mathcal{E}_{k-l}(\rho_{k-l})\right\|_{L^2L^2}\\
 &+\nu^{-\frac{1}{6}}|k|^{\frac{2}{3}}|B|^{-\frac{1}{3}}\left(\frac{R}{R-1}\right)^{\frac{1}{2}}(1+\log R) \cdot\\
 &\left(\left\|r\omega_0\right\|_{L^\infty L^2}\left\|\mathcal{E}_k(\rho_k)\right\|_{L^2L^2}+\left\|r\mathcal{E}_k(\omega_k)\right\|_{L^2 L^2}\left\|\rho_0\right\|_{L^\infty L^2} \right)\\
&   \lesssim  \nu^{-\frac{1}{6}}|k|^{\frac{2}{3}}|B|^{-\frac{1}{3}}R \left(\frac{R}{R-1}\right)^{\frac{1}{2}}(1+\log R) \cdot \\
    & \left( \left(\nu k^2\right)^{-\frac{1}{12}}|B|^{-\frac{1}{6}} R^{\frac{1}{2}} (\nu k^2)^{-\frac{1}{4}}R^{\frac{1}{2}}(E_0 H_k+E_kH_0)\right)\\
 &+\nu^{-\frac{1}{6}}|k|^{\frac{2}{3}}|B|^{-\frac{1}{3}}R \left(\frac{R}{R-1}\right)^{\frac{1}{2}}(1+\log R) \cdot \\
 &\left(\sum_{l \in \mathbb{Z} \backslash\{0, k\}}|l|^{-\frac{1}{2}}\left(\nu (k-l)^2\right)^{-\frac{1}{6}}|B|^{-\frac{1}{3}} R  E_l H_{k-l}\right) . \\
&   \lesssim   R^2\left(\frac{R}{R-1}\right)^{\frac{1}{2}}(1+\log R)\cdot\\
&\left(\nu^{-\frac{1}{2}}|B|^{-\frac{1}{2}}(E_0 H_k+E_kH_0)+\nu^{-\frac{1}{3}}|B|^{-\frac{2}{3}}|k|^{\frac{1}{3}}\sum_{l \in \mathbb{Z} \backslash 0, k\}} E_l H_{k-l}\right),
\end{aligned}
\end{equation}
where in the last line we use the fact
$$
|k| \leq 2|l||k-l| \text { for any } k \in \mathbb{Z}, l \in \mathbb{Z} \backslash\{0, k\},
$$
and the following basic inequality
 $$|k|^\alpha \leq|l|^\alpha+|k-l|^\alpha\text { for any } k, l \in \mathbb{Z} \text { and any } \alpha \in(0,1].$$
Moreover, one gets
\begin{equation}\label{6.22}
    \begin{aligned}
        \nu^{-\frac{1}{2}}\left\|g_2\right\|_{L^2L^2}&=\nu^{-\frac{1}{2}}\left\|\sum_{l \in \mathbb{Z} \backslash(0)} \frac{l \varphi_l \rho_{k-l}}{r^{\frac{3}{2}}}\right\|_{L^2} \leq \nu^{-\frac{1}{2}}\sum_{l \in \mathbb{Z} \backslash(0)}|l|\left\|\frac{\varphi_l}{r^{\frac{1}{2}}}\right\|_{L^2L^{\infty}}\left\|\frac{\rho_{k-l}}{r}\right\|_{L^\infty L^2}\\
        &\lesssim \nu^{-\frac{1}{2}}|B|^{-\frac{1}{2}}R^2\sum_{l \in \mathbb{Z} \backslash(0)}E_lH_{k-l}.
\end{aligned}
\end{equation}
        Thus, by \eqref{6.20}, \eqref{6.21} and \eqref{6.22}, we can obtain that for $\nu k^2\leq |B|$
     \begin{equation}
         \begin{aligned}
    H_k&\lesssim \|\rho_k(0)\|_{L^2}+\nu^{-\frac{1}{6}}|k|^{\frac{2}{3}}|B|^{-\frac{1}{3}}\left\|\mathcal{E}_k (g_1)\right\|_{L^2 L^2}\\
 &\quad+\nu^{-\frac{1}{2}}\left\|\mathcal{E}_k(g_2)\right\|_{L^2 L^2}\\
    &\lesssim\|\rho_k(0)\|_{L^2}
 +R^2\left(\frac{R}{R-1}\right)^{\frac{1}{2}}(1+\log R)\cdot\\&\left(\nu^{-\frac{1}{2}}|B|^{-\frac{1}{2}}(E_0 H_k+E_kH_0)+\nu^{-\frac{1}{3}}|B|^{-\frac{2}{3}}|k|^{1/3}\sum_{l \in \mathbb{Z} \backslash 0, k\}} E_l H_{k-l}\right)\\
 &+\nu^{-\frac{1}{2}}|B|^{-\frac{1}{2}}R^2\sum_{l \in \mathbb{Z} \backslash(0)}E_lH_{k-l},\\
\end{aligned}
\end{equation}
then we have
$$
H_k\lesssim \|\rho_k(0)\|_{L^2}+\nu^{-\frac{1}{2}}|B|^{-\frac{1}{2}}R^2\left(\frac{R}{R-1}\right)^{\frac{1}{2}}(1+\log R)\sum_{l \in \mathbb{Z} \backslash(0)}E_lH_{k-l}.
$$
For $\nu k^2\geq |B|,$ due to Proposition \ref{pro5.8}, we also get
    \begin{equation}\label{6.24}
        \begin{aligned}
            H_k&\leq C\left\|\rho_k(0)\right\|_{L^2}+\nu^{-1/2}\left\|\mathcal{E}_k(g_1)\right\|_{L^2 L^2}
        +\nu^{-1/2}\left\|\mathcal{E}_k(g_2)\right\|_{L^2}\\
            & \lesssim||\rho_{k}(0)||_{L^2}+C R\left(\frac{R}{R-1}\right)^{\frac{1}{2}}(1+\log R)\nu^{-\frac{1}{2}}\cdot\\
   &\left( \left(\nu k^2\right)^{-\frac{1}{12}}|B|^{-\frac{1}{6}} R^{1/2} (\nu k^2)^{-\frac{1}{4}}R^{1/2}(E_0 H_k+E_kH_0)\right)\\
 &+R\left(\frac{R}{R-1}\right)^{\frac{1}{2}}(1+\log R)\left(\sum_{l \in \mathbb{Z} \backslash\{0, k\}}|l|^{-1/2}\left(\nu (k-l)^2\right)^{-\frac{1}{6}}|B|^{-\frac{1}{3}} R  E_l H_{k-l}\right)\\
 &+\nu^{-\frac{1}{2}}|B|^{-\frac{1}{2}}R^2\sum_{l \in \mathbb{Z} \backslash(0)}E_lH_{k-l}\\
            &\lesssim ||\rho_{k}(0)||_{L^2}+\nu^{-\frac{1}{2}}|B|^{-\frac{1}{2}}R^2\left(\frac{R}{R-1}\right)^{\frac{1}{2}}(1+\log R)\sum_{l \in \mathbb{Z} \backslash(0)}E_lH_{k-l}.
        \end{aligned}
    \end{equation}
\end{proof}
    Now we prove Theorem \ref{thm1}. From \eqref{6.5} and \eqref{6.6}, we deduce
    \begin{equation}\label{6.25}
        \begin{aligned}
     &\sum_{k\in Z}E_k\lesssim \sum_{k\in Z}\mathcal{M}_k(0)+\nu^{-\frac{1}{2}}|B|^{-\frac{1}{2}}R^2\left(\frac{R}{R-1}\right)^{\frac{1}{2}}(1+\log R)\sum_{k\in Z}\sum_{l \in \mathbb{Z} \backslash(0)}E_lE_{k-l}\\
     &+\sum_{k\in Z}\nu^{-2/3}|B|^{-1/3}RH_{k}.
     \end{aligned}
\end{equation}
    And by the fact
    $$\sum_{k\in\mathbb{Z}}H_k=H_0+\sum_{k\in\mathbb{Z}\setminus\left\lbrace  0\right\rbrace, \nu k^2\leq |B|}H_k+\sum_{k\in\mathbb{Z}\setminus\left\lbrace 0\right\rbrace,\nu k^2>|B|}H_k,$$
    combining \eqref{6.7} and \eqref{6.8}, we can deduce
    \begin{equation}\label{6.26}
    \sum_{k\in Z}H_k\lesssim \sum_{k\in Z}\|\rho_k(0)\|_{L^2}+\nu^{-\frac{1}{2}}|B|^{-\frac{1}{2}}R^2\left(\frac{R}{R-1}\right)^{\frac{1}{2}}(1+\log R)\sum_{k\in Z}\sum_{l \in \mathbb{Z} \backslash(0)}E_lH_{k-l}.
\end{equation}
    On the other hand, if   $$\sum_{k\in Z}||\rho_k(0)||_{L^2}\leq \epsilon_0\nu^{7/6}|B|^{5/6}R^{-3}\left(\frac{R}{R-1}\right)^{-\frac{1}{2}}(1+\log R)^{-1},$$ and $$\sum_{k\in Z}\mathcal{M}_k(0)\leq \epsilon_1\nu^{1/2}|B|^{1/2}R^{-2}\left(\frac{R}{R-1}\right)^{-\frac{1}{2}}(1+\log R)^{-1}.$$
    Thus, for $\epsilon_0,\epsilon_1$ suitably small, by bootstrap arguments, we can deduce  from \eqref{6.25} and \eqref{6.26} that
    $$\sum_{k\in\mathbb{Z}}H_k\leq C\epsilon_0\nu^{7/6}|B|^{5/6}R^{-3}\left(\frac{R}{R-1}\right)^{-\frac{1}{2}}(1+\log R)^{-1},$$ and
 $$\sum_{k\in\mathbb{Z}}E_k\leq C\epsilon_1\nu^{1/2}|B|^{1/2}R^{-2}\left(\frac{R}{R-1}\right)^{-\frac{1}{2}}(1+\log R)^{-1}.$$
    This completes the proof of Theorem \ref{thm1}.

    \textbf{Conflict of Interest}\quad The authors declared that they have no conflict of interest.
    \section{Acknowledgment}
The author thank Prof. Weike Wang and Binbin Shi for suggesting this
problem and many valuable discussions. The author is supported by
National Nature Science Foundation of China 12271357, 12331007 and
12161141004.
    \bibliography{bib}

\end{document}